\newcommand{\Llog}{L\log L}
\newcommand{\Lexp}{L_{\exp}}
\newcommand{\bd}{\,{\mathrm{d}}}
\newcommand{\1}{{\mathbb{1}}}
\newcommand{\R}{{\mathbb{R}}}
\newcommand{\N}{{\mathbb{N}}}
\newcommand{\meas}{{\mathcal{M}}}
\newcommand{\lebesgue}{{\mathcal{L}}}
\newcommand{\prob}{{\mathcal{P}}}
\newcommand{\cont}{{\mathcal{C}}}
\newcommand{\pushforward}[2]{{{#1}_{\#}#2}}
\newcommand{\weakstarto}{\rightharpoonup^*}
\DeclareMathOperator*{\gammalim}{\Gamma\text{-}\lim}
\newcommand{\embed}{\hookrightarrow}
\newcommand{\norm}[1]{\|{#1}\|}
\newcommand{\abs}[1]{\left|{#1}\right|}
\DeclareMathOperator{\supp}{supp}
\pgfplotsset{compat=newest}
\pgfplotsset{plot coordinates/math parser=false}
\def\thetitle{Entropic regularization of continuous optimal transport problems}
\title{\thetitle}
\author{Christian Clason%
    \thanks{%
        Faculty of Mathematics, 
        University of Duisburg-Essen, 
        45117 Essen, Germany 
        (\email{christian.clason@uni-due.de}, \orcid{0000-0002-9948-8426})
    }
    \and 
    Dirk A. Lorenz%
    \thanks{%
        Institute of Analysis and Algebra,
        TU Braunschweig,
        38092 Braunschweig, Germany
    (\email{d.lorenz@tu-braunschweig.de}, \orcid{0000-0002-7419-769X})}
    \and Hinrich Mahler%
    \thanks{%
        Institute of Analysis and Algebra,
        TU Braunschweig,
        38092 Braunschweig, Germany
    (\email{h.mahler@tu-braunschweig.de}, \orcid{0000-0001-9108-549X})}
    \and Benedikt Wirth%
    \thanks{%
        Applied Mathematics Münster,
        University of Münster,
        Einsteinstraße 62, 48149 Münster, Germany
        (\email{benedikt.wirth@uni-muenster.de}, \orcid{0000-0003-0393-1938})
    }
}
\date{2020-06-15}
\begin{document}
\maketitle

\begin{abstract}
    We analyze continuous optimal transport problems in the
    so-called Kantorovich form, where we seek a transport plan between two
    marginals that are probability measures on compact subsets of Euclidean
    space. We consider the case of regularization with the negative entropy
    with respect to the Lebesgue measure, which has attracted attention because
    it can be solved by the very simple Sinkhorn algorithm. We first analyze
    the regularized problem in the context of classical Fenchel duality and
    derive a strong duality result for a predual problem in the space of
    continuous functions. However, this problem may not admit a minimizer,
    which prevents obtaining primal-dual optimality conditions. We then show
    that the primal problem is naturally analyzed in the Orlicz space of
    functions with finite entropy in the sense that the entropically
    regularized problem admits a minimizer if and only if the marginals have
    finite entropy. We then derive a dual problem in the corresponding dual
    space, for which existence can be shown by purely variational arguments and
    primal-dual optimality conditions can be derived. For marginals that do not
    have finite entropy, we finally show Gamma-convergence of the regularized
    problem with smoothed marginals to the original Kantorovich problem.
\end{abstract}

\section{Introduction}

The Kantorovich formulation of optimal transport is the problem of finding a transport plan that describes how to move some measure onto another measure of the same mass such that a certain cost functional is minimal \cite{kantorovich1942masses}.
Specifically, let $\Omega_{1}$ and $\Omega_{2}$ be two compact subset of $\R^{n_{1}}$ and $\R^{n_{2}}$, respectively. For given probability measures $\mu$ on $\Omega_{1}$ and $\nu$ on $\Omega_{2}$ and a continuous cost function $c:\Omega_{1}\times\Omega_{2}\to [0,\infty)$, the goal is to find a measure $\pi$ on $\Omega_{1}\times\Omega_{2}$ such that the cost $\int_{\Omega_{1}\times \Omega_{2}}c\bd\pi$ is minimal among all $\pi$ that have $\mu$ and $\nu$ as marginals. This problem has been well studied, and we refer to the recent books \cite{villani2008optimal,Santambrogio} for an overview. For example, it is known that the problem has a solution $\pi$ and that the support of $\pi$ is contained in the so-called $c$-superdifferential of a $c$-concave function on $\Omega_{1}$, see \cite[Thm.~1.13]{ambrosio2013user}. (This is sometimes called the \emph{fundamental theorem of optimal transport}.) In the case where $\Omega_{1}$ and $\Omega_{2}$ are both subsets of $\R^{n}$ and where $c(x_1,x_2) = |x_1-x_2|^2$ is the squared Euclidean distance, this implies that optimal plans $\pi$ are singular with respect to the Lebesgue measure.
Hence, the optimal plan is not a measurable function, and so standard approximation techniques from numerical analysis (e.g. by piecewise constant or piecewise linear functions) are not applicable.
This motivates the use of regularization of the continuous problem to obtain approximate solutions that are functions instead of measures, which in turn can be treated by classical discretization techniques in order to solve the regularized problem.

In this work we focus on \emph{entropic regularization} by adding a multiple of the negative entropy of $\pi$ (with respect to the Lebesgue measure) to the objective function. This forces the optimal plan to be a measure that has a density with respect to the Lebesgue measure. Furthermore, in the discrete setting, this allows to solve the problem numerically by the very simple Sinkhorn algorithm \cite{Knight:2008,Cuturi:2013,Benamou:2015}.

\paragraph{Notation and problem statement.} To fully state the regularized optimal transport problem, we introduce some notation.
By $\meas(\Omega)$ and $\prob(\Omega)$ we denote the set of Radon and probability measures on $\Omega\subset\R^n$, respectively.
The Lebesgue measure will be denoted by $\lebesgue$ (the set on which it is defined being clear from the context), and integrals with respect to the Lebesgue measure are simply denoted by $\mathrm{d}x$ with the appropriate integration variable $x$.
We write $L^{p}(\Omega,\mathrm{d}\mu)$ for the space of $p$-integrable functions with respect to the measure $\mu$ but omit the set $\Omega$ if it is clear from the context. If no measure is given, $L^{p}$ always refers to the space with respect to the Lebesgue measure.
In the case where the measure $\pi$ has a density with respect to the Lebesgue measure, we will also use $\pi$ for that density.
For $\mu\in\meas(\Omega_{1})$ and $g:\Omega_{1}\to\Omega_{2}$, we denote by $\pushforward{g}{\pi}$ the pushforward of $\mu$ by $g$, i.e., the measure on $\Omega_{2}$ defined by $\pushforward{g}{\pi}(B) = \pi(g^{-1}(B))$ for all measurable sets $B\subset \Omega_2$.
In particular, we will use the coordinate projections $P_i:\Omega_1\times\Omega_2\to\Omega_i$, $P_i(x_1,x_2)=x_i$, and the fact that $\pushforward{P_{i}}{\pi}$ is the $i$th marginal of $\pi\in\meas(\Omega_1\times\Omega_2)$.
The entropically regularized Kantorovich problem of optimal mass transport between $\mu\in\prob(\Omega_1)$ and $\nu\in\prob(\Omega_2)$ is then given by
\begin{equation}\label{eq:entropyRegularized-intro}
    \inf_{\substack{\pi\in\prob(\Omega_1\times\Omega_2),\\ \pushforward{(P_1)}{\pi}=\mu,\,\pushforward{(P_2)}{\pi}=\nu}}
    \int_{\Omega_1\times\Omega_2}c\bd\pi+\gamma\int_{\Omega_1\times\Omega_2}\pi(\log\pi-1)\bd(x_1,x_2).
    \tag{P}
\end{equation}
(Note that we used the negative entropy of $\pi$ with respect to the Lebesgue measure for regularization.
One could also consider regularization by adding $\gamma\int_{\Omega_1\times\Omega_2}\pi(\log\pi-1)\bd \theta$ for some other measure $\theta$, e.g., the product measure $\mu\otimes\nu$ \cite{cuturi2018semidual}, but we will not pursue this further.)
A purely formal application of convex duality then yields the predual problem
\begin{equation}\label{eq:entropyRegularizedDual-formal-intro}
    \sup_{\substack{\alpha\in \cont(\Omega_{1})\\\beta\in\cont(\Omega_{2})}}\int_{\Omega_{1}}\alpha(x_1)\bd\mu(x_1)+\int_{\Omega_{2}}\beta(x_2)\bd\nu(x_2)-\gamma\int_{\Omega_1\times\Omega_2}\exp\left(\tfrac{-c(x_1,x_2)+\alpha(x_2)+\beta(x_1)}\gamma\right)\bd(x_1,x_2).
    \tag{D}
\end{equation}
Having a primal and a dual problem, it is now possible to write down the system of Fenchel--Rockafellar extremality conditions and derive and analyze algorithms to solve this system; in fact, this is one of the possible ways of deriving the Sinkhorn algorithm in the discrete case.
However, the existence of solutions to \eqref{eq:entropyRegularizedDual-formal-intro} -- which is necessary to rigorously obtain extremality conditions -- is not obvious in the continuous case. As it turns out, neither \eqref{eq:entropyRegularized-intro} nor \eqref{eq:entropyRegularizedDual-formal-intro} may admit a solution in the considered spaces. As we will show, it is necessary and sufficient to obtain existence of a primal solution for the marginals to be in the Banach space $\Llog$ of functions of finite entropy; correspondingly, a reformulation of the predual problem in the \emph{dual} space $\Lexp$ allows showing existence of a maximizer by purely variational methods. For marginals that are not in $\Llog$, we show $\Gamma$-convergence of minimizers of regularized problems with suitably smoothed marginals.

\paragraph{Related work.} The continuous optimal transport problem has been analyzed in the survey paper \cite{Leonard:2014} where the relation to the so-called dynamic Schrödinger problem is made.
Another survey \cite{Essid:2019} presents an existence proof for a reparameterized optimality system based on the convergence analysis for a continuous variant of Sinkhorn's algorithm (and attributes the proof and the algorithm  to Fortet \cite{Fortet:1940}).
A detailed overview of the connections between optimal transport, the Schrödinger problem, and the Sinkhorn algorithm from a stochastic control viewpoint is given in the even more recent survey \cite{Chen:2020}.
In \cite{carlier2017convergence}, primal existence has been shown in the subset of the space of measures which have a density of finite entropy with respect to the Lebesgue measure.
Furthermore, \cite{Chizat:2018} analyzes the problem (for unbalanced transport, i.e., for marginals with different mass) in $L^{1}$ and derives a dual formulation in $L^{\infty}$. However, the question of existence of a solution of the respective dual problem is not answered. In \cite{Borwein:1994}, this gap was closed through a contraction argument using the Hilbert metric. More precisely, \cite[Thm. 3.1]{Borwein:1994} guarantees the existence of dual solutions in $L^\infty$ provided that the feasible set of the dual problem is not empty. Moreover, if a certain constraint qualification holds, the dual optimizers $x$ and $y$ can be shown to satisfy
$x(s) + y(t) = \log u_0(s,t) \quad\text{a.e.}$,
where $u_0$ denotes the optimal primal solution.
Here $u_0$, $x$, and $y$ correspond to $\bar\pi$, $\alpha$, $\beta$ of \eqref{eq:entropyRegularized-intro} and \eqref{eq:entropyRegularizedDual-formal-intro} via
$u_0 = e^{\frac c\gamma}\bar{\pi}$, $x = \alpha/\gamma$, and $y = \beta/\gamma$.
A similar result is also stated in the more recent work \cite[Thm.~6]{Chen:2015}, which shows the existence of dual optimizers if the marginals are absolutely continuous probability measures. (The relation of $q$ and $\nu$ in \cite{Chen:2015} to our notation is $q = e^{\frac{-c}{\gamma}}$ and $\nu = \bar\pi e^{\frac c \gamma}$.)
Another approach to prove the existence of unique solutions (even in the multi-marginal case) is presented in \cite[Thm 4.3]{Carlier:2018}. The authors show that a certain map is a bijection, which yields existence of dual solutions $\alpha$ and $\beta$ in $L^\infty$ if the marginals are functions in $L^\infty$ as well. Moreover, in \cite{Berman:2017} a compactness argument is used to show the existence of a fixed point of the Sinkhorn iteration; in contrast to our work, the entropy penalization there is considered with respect to the product measure of the marginals.

Previous works \cite{Berman:2017,Borwein:1994,Carlier:2018,Chen:2015} tackle the problem of existence of dual solutions under various conditions in standard Lebesgue spaces. For marginals of finite entropy, \cite[Cor. 3.2]{Csiszar:1975} already states that dual solutions exist and satisfy $\bar\alpha\in L^1(\Omega_1,\mu)$ and $\bar\beta\in L^1(\Omega_2,\nu)$ (in our notation; the notation there uses $Q=\bar\pi e^{\frac{-c}{\gamma}}$, $P_1=\mu$, $a=e^{\frac{-\alpha}\gamma}$, $R=e^{\frac{-c}\gamma}\lebesgue$, $P_2=\nu$, and $b=e^{\frac{-\beta}\gamma}$). Note that while the primal solution $Q$ is in $\Llog(\Omega, \lebesgue)$, the analysis takes place in $\Llog(\Omega, e^{\frac{-c}\gamma}\lebesgue)$.
Moreover, as the authors of \cite{Borwein:1994} note, \cite{Csiszar:1975} fails to elaborate a crucial step of the argumentation. This gap was closed only later in \cite{Borwein:1992c}.
None of the mentioned works considered necessary conditions for existence.
Finally, \cite{Lorenz:2019} analyzes regularization with the $L^{2}$ norm of $\pi$ and derives existence of solutions of the dual problem.

The notion of Orlicz spaces in the context of convex integral functionals has previously been used in \cite{Leonard:2008}, where existence of both primal and dual optimizers are covered in a more general setting. 
More precisely, the spaces used in \cite{Leonard:2008}, which are also known as Musielak--Orlicz spaces \cite{musielak:1983}, are a generalization of the Orlicz spaces used here. 
The setting considered here can be recovered in two different ways:
In section 7.3 (a), the above referenced results of \cite{Csiszar:1975} are recovered as a special case (where again our case corresponds to choosing $R = e^{\frac{-c}\gamma}\lebesgue$).
Moreover, choosing $m(z) = e^{\frac{-c(z)}{\gamma}}$ in the second example (titled \emph{a variant of the Boltzmann entropy}) in section 7.1 gives a problem very similar to the one considered here. The difference lies in the fact that the cost function $c$ is part of the definition of the relevant Musielak--Orlicz spaces in this case, and hence the analysis takes place in different spaces.
As the aim of \cite{Leonard:2008} is to weaken the necessary assumptions as much as possible, the overall setting is more abstract, and the proofs rely heavily on the authors previous work \cite{Leonard:2010}. Here we aim for a self-contained, more elementary, treatment of \eqref{eq:entropyRegularized-intro}.

Regarding $\Gamma$-convergence, the limit for $\gamma\to 0$ and fixed marginals with densities with finite entropy was considered recently in \cite{carlier2017convergence}.

\paragraph{Organization.} The next \cref{sec:finite-entropy} recalls statements about functions of finite entropy and the duality of the respective Orlicz space $\Llog$. In \cref{sec:fenchel-duality}, we collect and prove (for the sake of completeness) results on the regularized optimal transport problem \eqref{eq:entropyRegularized} in the context of duality of continuous functions and measures. In particular, \cref{thm:solutionProperties} shows that primal solutions exist if and only if the marginals are in the space $\Llog$. Hence, we analyze the problem in \cref{sec:duality-llogl-lexp} in the context of $\Llog$ and $\Lexp$. We show existence and uniqueness of the primal problem in $\Llog$, derive the dual problem and show existence of solutions for the dual problem in $\Lexp$. We finally show a result on $\Gamma$-convergence for the combined regularization and smoothing of marginals that do not have finite entropy in \cref{sec:gamma}. 

\section{Review of functions of finite entropy and the space \texorpdfstring{$\scriptstyle\Llog$}{LlogL}}
\label{sec:finite-entropy}

Entropic regularization deals with positive integrable functions of finite entropy.
These functions are closely connected to the space $\Llog$, a special case of (Birnbaum--)Orlicz spaces, and hence we collect some facts about this space which are mainly taken from \cite{Rao:1988,Bennett:1988,adams:2003}; see also \cite{simon2011convexity}.
We consider a compact domain $\Omega\subset\R^n$ and
denote the \emph{neg-entropy} of a measurable function $f:\Omega\to \R$ by
\begin{equation*}
    E(f) = \int_\Omega \abs{f(x)}\log(\abs{f(x)})\bd x,
\end{equation*}
where we set $0\log 0 = 0$ as usual.
Note that since $s\log s\geq -1/e$ for every $s\geq 0$, the neg-entropy always lies in the interval $[-\lebesgue(\Omega)/e,\infty]$.
We say that $f$ has finite entropy if $E(f)<\infty$.
Following \cite{Bennett:1988}, we define
\begin{equation*}
    \Llog(\Omega) := \left\{f:\Omega\to \R \text{ measurable} : \int_\Omega \abs{f(x)}\log^+(\abs{f(x)})\bd x<\infty\right\},
\end{equation*}
where $\log^+(x) = \max(\log(x),0)$.
\begin{proposition}[\protect{\cite[Thm.~1.2]{Navrotskaya:2013}}]\label{thm:finiteEntropy}
    A nonnegative measurable function $f$ on a set with finite measure has finite entropy if and only if $f\in \Llog(\Omega)$.
\end{proposition}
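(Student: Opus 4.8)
The plan is to split the logarithm into its positive and negative parts and to show that only the positive part can make the entropy integral diverge, so that finiteness of $E(f)$ becomes equivalent to finiteness of the defining integral of $\Llog(\Omega)$. Concretely, for $s>0$ I would write $\log s = \log^+ s - \log^- s$ with $\log^- s := \max(-\log s,0)\ge 0$, so that $s\log s = s\log^+ s - s\log^- s$ for all $s\ge 0$ under the convention $0\log 0 = 0$.

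The first key step is the elementary observation that $s\mapsto s\log^- s$ is bounded on $[0,\infty)$: it vanishes for $s\ge 1$, while on $[0,1]$ it equals $s\log(1/s)$, which is continuous and attains its maximum $1/e$ at $s=1/e$. Consequently, using that $\lebesgue(\Omega)<\infty$ by hypothesis,
\[
    0 \le \int_\Omega \abs{f(x)}\log^-(\abs{f(x)})\bd x \le \frac{\lebesgue(\Omega)}{e} < \infty
\]
for every nonnegative measurable $f$ on $\Omega$; this is the only place where the finiteness of the measure enters.

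The second step is then pure bookkeeping: since $\abs{f}\log^+(\abs{f})$ and $\abs{f}\log^-(\abs{f})$ are both nonnegative measurable functions, there is no $\infty-\infty$ ambiguity, and
\[
    E(f) = \int_\Omega \abs{f(x)}\log^+(\abs{f(x)})\bd x - \int_\Omega \abs{f(x)}\log^-(\abs{f(x)})\bd x
\]
with the subtracted term lying in $[0,\lebesgue(\Omega)/e]$. Hence $E(f)<\infty$ exactly when $\int_\Omega \abs{f(x)}\log^+(\abs{f(x)})\bd x<\infty$, i.e.\ exactly when $f\in\Llog(\Omega)$; as a byproduct this also reproves the remark that $E(f)\in[-\lebesgue(\Omega)/e,\infty]$.

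I do not expect a genuine obstacle here. The only points needing a little care are the boundedness estimate for $s\log^- s$ and the verification that the decomposition of $E(f)$ is performed on the level of nonnegative integrands, so that the equivalence transfers cleanly in both directions.
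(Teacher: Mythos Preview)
Your argument is correct and is the standard proof of this fact. Note that the paper itself does not supply a proof here but simply cites \cite[Thm.~1.2]{Navrotskaya:2013}, so there is no ``paper's own proof'' to compare against; your self-contained argument is a welcome addition.
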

It turns out that $\Llog(\Omega)$ can be normed such that it becomes a Banach space and that its dual has a natural characterization.
In the following, we recall the central constructions and main results based on so-called Young functions.
\begin{definition}[Young functions]
    Let $\varphi : [0, \infty) \to [0, \infty]$ be increasing and lower semi-continuous with $\varphi(0) = 0$. Suppose that $\varphi$ is neither identically zero nor identically infinite on $(0,\infty)$. Then the function $\Phi$, defined by
    \begin{equation*}
        \Phi(t) := \int_0^t \varphi(s) \bd s\,,
    \end{equation*}
    is said to be a \emph{Young function}. Moreover, the function $\Psi$ defined by
    \begin{equation*}
        \Psi(s) := \max_{t\geq0} \{st - \Phi(t)\}
    \end{equation*}
    is called the \emph{complementary Young function} of $\Phi$.
\end{definition}

Any Young function is continuous and convex on its domain, and the complementary Young function $\Psi$ is again a Young function.
The notion of Young functions gives rise to a generalization of $L^p$ spaces through the definition of the so-called Luxemburg norm.
\begin{definition}[Luxemburg norm and Orlicz spaces]
    Let $\Phi$ be a Young function. The Luxemburg norm of a measurable function $f:\Omega\to\R$ is defined as
    \begin{equation}\label{eq:luxemburg}
        \|f\|_{\Phi}=\inf\left\{\gamma > 0: \int_\Omega\Phi\left(\frac{|f|}\gamma\right)\bd x\leq1\right\}\,.
    \end{equation}
    The space of all measurable functions with finite Luxemburg norm is called \emph{Orlicz space} and denoted by $L^\Phi(\Omega)$.
\end{definition}

\begin{remark}\label{rem:orlicz}
    General Orlicz norms do not scale in a simple way with the size of the set $\Omega$.  Writing $\1_A$ for the characteristic function of the set $A\subset \Omega$, i.e., $\1_A(x) = 1$ if $x\in A$ and $0$ else, the  $p$-norm (corresponding to the Young function $\Phi(t)=t^{p}$) of  $\1_{\Omega}$ equals $\norm{\1_{\Omega}}_{p} = \lebesgue(\Omega)^{1/p}$. For a strictly increasing Young function $\Phi$, we obtain the more complicated result $\norm{\1_{\Omega}}_{\Phi} = (\Phi^{-1}(\lebesgue(\Omega)^{-1}))^{-1}$. As a consequence, some results in the following depend on the size of the domain. One could get rid of this dependence by adapting the definition of the norm to, e.g.,
    \[
        \|f\|_{\Phi}=\inf\left\{\gamma> 0: \frac1{\lebesgue(\Omega)}\int_\Omega\Phi\left(\frac{|f|}\gamma\right)\bd x\leq1\right\}\,.
    \]
    However, since this
    definition would be nonstandard, we refrain from doing so.

    Moreover, note that
    \[
        \int_\Omega\Phi\left(\frac{|f|}{\|f\|_{\Phi}}\right)\bd x\leq1
    \]
    is always true, but equality may fail to hold. For a counterexample, see, e.g., \cite[Example 2.8]{simon2011convexity}.
\end{remark}
\begin{theorem}[\protect{\cite[Thm.~8.10]{adams:2003}}]
    $L^\Phi(\Omega)$ is a Banach space with respect to the Luxemburg norm.
\end{theorem}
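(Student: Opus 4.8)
The result is classical (see \cite{adams:2003}), and I would reproduce the standard argument, which splits into showing that $\|\cdot\|_\Phi$ is a norm on the vector space $L^\Phi(\Omega)$ and that this normed space is complete. The tool underlying everything is the comparison between the Luxemburg norm and the modular $f\mapsto\int_\Omega\Phi(|f|)\bd x$: the first thing to record is that whenever $0<\|f\|_\Phi<\infty$ one has $\int_\Omega\Phi(|f|/\|f\|_\Phi)\bd x\le1$, and consequently $\int_\Omega\Phi(|f|)\bd x\le1$ whenever $\|f\|_\Phi\le1$. Both follow by choosing $\gamma_k\downarrow\|f\|_\Phi$ with $\int_\Omega\Phi(|f|/\gamma_k)\bd x\le1$ (possible since the set in \eqref{eq:luxemburg} is an up-set with infimum $\|f\|_\Phi$) and passing to the limit by monotone convergence, using that $\Phi$ is increasing and left-continuous. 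I would also note the structural fact that $\Phi(t)\to\infty$ as $t\to\infty$: since $\varphi$ is increasing and not identically zero on $(0,\infty)$, we have $\varphi\ge c$ on $[s_0,\infty)$ for some $c>0$ and $s_0\ge0$, hence $\Phi(t)\ge c(t-s_0)$ for $t\ge s_0$.

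For the norm axioms, positive homogeneity follows immediately from the substitution $\gamma\mapsto\gamma/|\lambda|$ in \eqref{eq:luxemburg}. For the triangle inequality I would pick $\gamma>\|f\|_\Phi$ and $\delta>\|g\|_\Phi$, so that $\int_\Omega\Phi(|f|/\gamma)\bd x\le1$ and $\int_\Omega\Phi(|g|/\delta)\bd x\le1$, and use convexity and monotonicity of $\Phi$ to get the pointwise estimate
\[
  \Phi\left(\frac{|f+g|}{\gamma+\delta}\right)\le\frac{\gamma}{\gamma+\delta}\,\Phi\left(\frac{|f|}{\gamma}\right)+\frac{\delta}{\gamma+\delta}\,\Phi\left(\frac{|g|}{\delta}\right);
\]
integrating gives $\int_\Omega\Phi(|f+g|/(\gamma+\delta))\bd x\le1$, hence $\|f+g\|_\Phi\le\gamma+\delta$, and taking the infimum over $\gamma$ and $\delta$ yields the triangle inequality. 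This simultaneously shows that $L^\Phi(\Omega)$ is closed under addition, hence a vector space. For definiteness, if $\|f\|_\Phi=0$ then $\int_\Omega\Phi(|f|/\gamma)\bd x\le1$ for every $\gamma>0$ (the integral being non-increasing in $\gamma$); if $\lebesgue(\{|f|>\epsilon\})>0$ held for some $\epsilon>0$, then letting $\gamma\downarrow0$ would give $\int_\Omega\Phi(|f|/\gamma)\bd x\ge\Phi(\epsilon/\gamma)\,\lebesgue(\{|f|>\epsilon\})\to\infty$, a contradiction, so $f=0$ almost everywhere.

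For completeness, let $(f_k)$ be a Cauchy sequence in $L^\Phi(\Omega)$. I would first show it is Cauchy in measure: for $\epsilon>0$ and indices $k,l$ with $\eta:=\|f_k-f_l\|_\Phi>0$ small, a Chebyshev-type estimate against the modular gives
\[
  \lebesgue(\{|f_k-f_l|>\epsilon\})\le\frac1{\Phi(\epsilon/\eta)}\int_\Omega\Phi\left(\frac{|f_k-f_l|}{\eta}\right)\bd x\le\frac1{\Phi(\epsilon/\eta)},
\]
which tends to $0$ as $k,l\to\infty$, since then $\eta\to0$ and hence $\Phi(\epsilon/\eta)\to\infty$. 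As $\lebesgue(\Omega)<\infty$, the sequence converges in measure to some measurable function $f$, and some subsequence $f_{k_j}$ converges to $f$ almost everywhere. Now fix $\epsilon>0$ and $N$ with $\|f_k-f_l\|_\Phi\le\epsilon$ for all $k,l\ge N$, so that $\int_\Omega\Phi(|f_k-f_l|/\epsilon)\bd x\le1$ by the modular comparison. Letting $l=k_j\to\infty$ and applying Fatou's lemma together with the lower semicontinuity of $\Phi$,
\[
  \int_\Omega\Phi\left(\frac{|f_k-f|}{\epsilon}\right)\bd x\le\liminf_{j\to\infty}\int_\Omega\Phi\left(\frac{|f_k-f_{k_j}|}{\epsilon}\right)\bd x\le1,
\]
so $\|f_k-f\|_\Phi\le\epsilon$ for all $k\ge N$. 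In particular $f=f_N-(f_N-f)\in L^\Phi(\Omega)$ and $f_k\to f$ in the Luxemburg norm, which proves completeness.

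The algebraic parts are routine; the step I expect to demand the most care is the completeness argument — specifically, the passage from Cauchyness in the Luxemburg norm to Cauchyness in measure on the finite-measure set $\Omega$, and the stability of the modular under almost-everywhere limits via Fatou's lemma and the lower semicontinuity of $\Phi$. The hypotheses on $\varphi$ (increasing, lower semicontinuous, neither identically $0$ nor identically $\infty$ on $(0,\infty)$) enter precisely at these points: they are exactly what guarantees $\Phi(t)\to\infty$ as $t\to\infty$ and the lower semicontinuity of $\Phi$, which are the only genuinely non-formal ingredients.
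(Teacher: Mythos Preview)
The paper does not give its own proof of this theorem; it is stated purely as a citation of \cite[Thm.~8.10]{adams:2003}. Your proposal is a correct and self-contained reproduction of the standard argument (norm axioms via convexity and monotonicity of $\Phi$, completeness via Cauchy-in-measure and Fatou), so there is nothing to compare against and nothing to correct.
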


We will also need the following estimate.
\begin{lemma}\label{thm:OrliczFunctionEstimate}
    Let $L^\Phi(\Omega)$ denote the Orlicz space with convex Young function $\Phi$ and $u\in L^\Phi(\Omega)$ with $\norm{u}_{\Phi}>1$. Then $\int_\Omega\Phi(|u|)\bd x\geq\norm{u}_{\Phi}$.
\end{lemma}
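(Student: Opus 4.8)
The plan is to combine the infimum definition of the Luxemburg norm with the one-sided scaling inequality supplied by convexity. Write $\lambda:=\norm{u}_{L^\Phi}$ and assume $\lambda\ge1$. First I would record that, because $\Phi$ is convex with $\Phi(0)=0$, one has $\Phi(\theta s)=\Phi\bigl(\theta s+(1-\theta)0\bigr)\le\theta\Phi(s)$ for all $\theta\in[0,1]$ and $s\ge0$; applying this with $\theta=1/\gamma$ for $\gamma\ge1$, with $s=\abs{u(x)}$, and integrating yields
\begin{equation*}
    \int_\Omega\Phi\!\left(\frac{\abs{u}}\gamma\right)\bd x\;\le\;\frac1\gamma\int_\Omega\Phi(\abs{u})\bd x\qquad\text{for every }\gamma\ge1\,,
\end{equation*}
both sides being allowed to equal $+\infty$.

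Next I would invoke \eqref{eq:luxemburg}: if $0<\gamma<\lambda$, then $\gamma$ does not lie in the set $\{\delta>0:\int_\Omega\Phi(\abs{u}/\delta)\bd x\le1\}$ over which the defining infimum is taken, so $\int_\Omega\Phi(\abs{u}/\gamma)\bd x>1$. Choosing $\gamma\in[1,\lambda)$ and inserting the scaling estimate gives $\tfrac1\gamma\int_\Omega\Phi(\abs{u})\bd x\ge\int_\Omega\Phi(\abs{u}/\gamma)\bd x>1$, hence $\int_\Omega\Phi(\abs{u})\bd x>\gamma$; letting $\gamma\uparrow\lambda$ (and $\gamma\to\infty$ in the degenerate case $\lambda=\infty$) gives $\int_\Omega\Phi(\abs{u})\bd x\ge\lambda=\norm{u}_{L^\Phi}$.

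I expect the only genuine subtlety to be the \emph{direction} in which the norm is used, not any hard estimate. The tempting alternative is to observe, by monotone convergence as $\delta\downarrow\lambda$, that $\int_\Omega\Phi(\abs{u}/\lambda)\bd x\le1$ and then to write $\int_\Omega\Phi(\abs{u})\bd x\ge\lambda\int_\Omega\Phi(\abs{u}/\lambda)\bd x$ by convexity --- but this uses the scaling inequality in the useless direction and stalls, since the modular at $\gamma=\lambda$ need not equal $1$ (for rapidly growing $\Phi$, such as the exponential-type Young functions relevant in \cref{sec:duality-llogl-lexp}, it can be strictly below $1$). The fix is exactly to approach $\lambda$ strictly from below, where \eqref{eq:luxemburg} forces the modular above $1$. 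The one borderline configuration this leaves open is $\lambda=1$, where $[1,\lambda)$ is empty and the chain above is vacuous; that case must be handled on its own, and it is precisely where the attainment behaviour of the Luxemburg infimum — and hence possibly the need to assume $\norm{u}_{L^\Phi}>1$ — becomes relevant.
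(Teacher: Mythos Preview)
Your argument is exactly the paper's: for $\gamma<\lambda$ the modular exceeds $1$, the convexity inequality $\Phi(s/\gamma)\le\tfrac1\gamma\Phi(s)$ (valid for $\gamma\ge1$) then gives $\int_\Omega\Phi(|u|)\bd x>\gamma$, and one sends $\gamma\uparrow\lambda$. Your caveat about $\lambda=1$ is also on target and in fact sharper than the paper: the paper's convex-combination step silently requires $1-\tfrac1\gamma\ge0$, so it carries the very same borderline gap; indeed for non-$\Delta_2$ Young functions such as $\Phi_{\exp}$ one can have $\norm{u}_{L^\Phi}=1$ with $\int_\Omega\Phi(|u|)\bd x<1$, so strictly the hypothesis should read $\norm{u}_{L^\Phi}>1$---which is all that is used in the lemma's single application in \cref{sec:gamma}, where the norm diverges.
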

\begin{proof}
    For any $1\leq\gamma<\norm{u}_{\Phi}$, it holds that $\int_{\Omega}\Phi\big(\frac{|u|}{\gamma}\big)\bd x>1$.
    It then follows from the convexity of $\Phi$ and $\Phi(0)=0$ that
    \begin{equation*}
        \begin{aligned}
            \frac{1}{\gamma}\int_{\Omega}\Phi(|u|)\bd x & = \int_{\Omega}\tfrac1\gamma\Phi(|u|) + \left(1-\tfrac1\gamma\right)\Phi(0)\bd x\\
            & \geq \int_{\Omega}\Phi\left(\tfrac1\gamma |u| + \left(1-\tfrac1\gamma\right)0\right)\bd x= \int_{\Omega}\Phi\left(\frac{|u|}{\gamma}\right)\bd x>1.
        \end{aligned}
    \end{equation*}
    Letting $\gamma\to\norm{u}_{\Phi}$, the claim follows.
\end{proof}

Note that by \cref{rem:orlicz}, \cref{thm:OrliczFunctionEstimate} does \emph{not} hold for $\norm{u}_\Phi = 1$.

Using $\Phi_{\log}(s)=s\log^+s$ as Young function now immediately yields $\Llog(\Omega) = L^{\Phi_{\log}}(\Omega)$. The complementary Young function
\begin{equation}\label{eq:Phiexp}
    \Phi_{\exp}(s)=
    \begin{cases}
        s& \text{if } 0<s\leq 1,\\
        e^{s-1}& \text{if } s>1
    \end{cases}
\end{equation}
of $\Phi_{\log}$ now provides a natural way to define the Orlicz space $L^{\Phi_{\exp}}(\Omega)=:L_{\exp}(\Omega)$. In fact, $L_{\exp}(\Omega)$ is the dual space of $\Llog(\Omega)$. 

\begin{proposition}[\protect{\cite[Thm.~IV.6.5]{Bennett:1988}}]
    \label{thm:dual-LlogL}
    If $\Omega$ has finite Lebesgue measure, then $\Llog(\Omega)^* = \Lexp(\Omega)$ (up to equivalence of norms).
    Moreover, for all $1<p<\infty$, the following embeddings hold
    \begin{equation*}
        L^\infty(\Omega)\embed \Lexp(\Omega)\embed L^p(\Omega)\embed \Llog(\Omega)\embed L^1(\Omega).
    \end{equation*}
\end{proposition}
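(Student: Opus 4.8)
The duality identity is the substantive part of the statement; the chain of embeddings will all come from one elementary comparison lemma. The plan is to (i) obtain $\Lexp(\Omega)\embed\Llog(\Omega)^*$ from a H\"older-type inequality, (ii) check that $\Phi_{\log}$ satisfies the $\Delta_2$-condition near infinity, which on a finite-measure domain makes bounded functions dense in $\Llog(\Omega)$ and lets one represent an arbitrary functional via Radon--Nikodym, and (iii) read each of the four embeddings off a pointwise comparison between the relevant Young functions.

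\emph{The inclusion $\Lexp\embed\Llog^*$.} Young's inequality $st\le\Phi_{\log}(t)+\Phi_{\exp}(s)$ for all $s,t\ge0$ is immediate from the definition of the complementary Young function; normalising $f$ and $g$ by their Luxemburg norms and integrating then gives $\int_\Omega|fg|\bd x\le2\|f\|_{\Phi_{\log}}\|g\|_{\Phi_{\exp}}$ for $f\in\Llog(\Omega)$, $g\in\Lexp(\Omega)$. Hence $g\mapsto\ell_g$, with $\ell_g(f):=\int_\Omega fg\bd x$, maps $\Lexp(\Omega)$ linearly into $\Llog(\Omega)^*$ with $\|\ell_g\|\le2\|g\|_{\Phi_{\exp}}$; a matching lower bound follows because the Luxemburg unit ball of $\Llog(\Omega)$ contains every $f$ with $\int_\Omega\Phi_{\log}(|f|)\bd x\le1$, over which $\sup_f\int_\Omega fg\bd x$ is precisely the Orlicz norm of $g$, which is equivalent to $\|g\|_{\Phi_{\exp}}$.

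\emph{Surjectivity.} For the converse I would take $\ell\in\Llog(\Omega)^*$. Since $\Phi_{\log}(2t)=2t\log2+2t\log t\le4\,\Phi_{\log}(t)$ for $t\ge2$, the space $\Llog(\Omega)$ satisfies the $\Delta_2$-condition near infinity, so on the finite-measure set $\Omega$ simple functions are norm-dense in $\Llog(\Omega)$. Define $\lambda(A):=\ell(\1_A)$; this is countably additive, and because $\|\1_A\|_{\Phi_{\log}}=\inf\{\gamma>0:\lebesgue(A)\,\Phi_{\log}(1/\gamma)\le1\}\to0$ as $\lebesgue(A)\to0$ (cf.\ \cref{rem:orlicz}), one has $\lambda\ll\lebesgue$, so Radon--Nikodym yields $g\in L^1(\Omega)$ with $\ell(f)=\int_\Omega fg\bd x$ for simple $f$, hence for all $f\in\Llog(\Omega)$ by density and continuity of $\ell$. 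The extremal characterisation of the Luxemburg norm of $\Phi_{\exp}$ then gives $\|g\|_{\Phi_{\exp}}\le C\sup\{\int_\Omega fg\bd x:\|f\|_{\Phi_{\log}}\le1\}=C\|\ell\|$, so $g\in\Lexp(\Omega)$ with equivalent norms. I expect this norm-equivalent surjectivity --- the only place where the $\Delta_2$-property is genuinely used --- to be the main obstacle; a shortcut is to cite \cite[Thm.~IV.6.5]{Bennett:1988} directly once $\Delta_2$ has been noted.

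\emph{The embeddings.} Three of the four inclusions follow from the comparison principle on a finite-measure domain: if Young functions $\Phi,\Psi$ satisfy $\Psi(t)\le\Phi(kt)$ for all $t\ge t_0$ (with $k,t_0>0$), then $L^\Phi(\Omega)\embed L^\Psi(\Omega)$ continuously, with constant depending only on $k$, $t_0$, and $\lebesgue(\Omega)$; this is proved by splitting $\{|f|<t_0\}$, on which $\Psi(|f|)$ is bounded, from $\{|f|\ge t_0\}$, on which $\Psi(|f|)\le\Phi(k|f|)$, and then using \cref{thm:OrliczFunctionEstimate} together with the homogeneity of the Luxemburg norm. Applying this with the pointwise bounds $t^p\le\Phi_{\exp}(kt)$ for large $t$ (as $\Phi_{\exp}$ grows exponentially), $t\log^+t\le(kt)^p$ for large $t$ (as $\log t\le t^{p-1}$ eventually), and $t\le t\log^+t$ for $t\ge e$ yields $\Lexp(\Omega)\embed L^p(\Omega)\embed\Llog(\Omega)\embed L^1(\Omega)$, where $L^1$ is read as the Orlicz space of $t\mapsto t$. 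The remaining inclusion $L^\infty(\Omega)\embed\Lexp(\Omega)$ is immediate, since $\Phi_{\exp}$ is finite-valued and continuous, so $\int_\Omega\Phi_{\exp}(|f|/\gamma)\bd x\le\lebesgue(\Omega)\,\Phi_{\exp}(\|f\|_\infty/\gamma)\le1$ for $\gamma$ large enough.
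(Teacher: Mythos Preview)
The paper does not give its own proof of this proposition; it states the result with a direct citation to \cite[Thm.~IV.6.5]{Bennett:1988} and moves on. Your sketch is a correct outline of the standard Orlicz-space argument (H\"older inequality for one inclusion, $\Delta_2$-regularity of $\Phi_{\log}$ plus Radon--Nikodym for surjectivity, pointwise comparison of Young functions for the embedding chain), and you even note the shortcut of citing Bennett directly. So there is nothing to compare: the paper simply imports the result, while you have supplied the proof the paper omits.
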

The Luxemburg norms \eqref{eq:luxemburg} on $L^\Phi(\Omega)$ are equivalent to the norms defined in \cite[Def.~IV.6.3]{Bennett:1988}
(in \cite[Def.~IV.6.3]{Bennett:1988}, the norms for $\Llog(\Omega)$ and $\Lexp(\Omega)$ are dual to each other).
The constants in this norm equivalence will in the following generically be denoted by $c_{\Phi}$.
Note that \cite[Thm.~IV.6.5]{Bennett:1988} is stated for domains with unit Lebesgue measure, but the case of general finite measure follows by a simple rescaling.

We also have the following properties, which follow from Theorem 8.21\,b and Theorem 8.19 in \cite{adams:2003}, respectively, by observing that $\Phi_{\log}$ is so-called $\Delta$-regular (c.f. \cite[Def. 8.7]{adams:2003}) but $\Phi_{\exp}$ is not.
\begin{lemma}
    \label{lem:llogl-separable}
    ~
    \begin{enumerate}[(i)]
        \item The space $\Llog(\Omega)$ is separable.
        \item The spaces $\Llog(\Omega)$ and $\Lexp(\Omega)$ are not reflexive.
    \end{enumerate}
\end{lemma}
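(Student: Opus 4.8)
The plan is to reduce both assertions to the elementary growth behaviour of the two Young functions $\Phi_{\log}$ and $\Phi_{\exp}$, because the structural results of \cite{adams:2003} characterize separability and reflexivity of an Orlicz space purely through whether the generating Young function and its complement are $\Delta$-regular in the sense of \cite[Def.~8.7]{adams:2003}. Since $\Omega$ is compact and hence $\lebesgue(\Omega)<\infty$, it suffices throughout to check the $\Delta_2$ condition \emph{near infinity}, i.e.\ to exhibit $t_0\geq0$ and $K>0$ with $\Phi(2t)\leq K\,\Phi(t)$ for all $t\geq t_0$.

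First I would verify that $\Phi_{\log}$ is $\Delta$-regular: for $t\geq2$ one has $\log t\geq\log2$, whence
\[
    \Phi_{\log}(2t)=2t\log(2t)=2t\log t+2t\log2\leq 4t\log t=4\,\Phi_{\log}(t).
\]
Thus $\Phi_{\log}$ satisfies the $\Delta_2$ condition near infinity with $K=4$, so \cite[Thm.~8.21\,b]{adams:2003} applies and gives that $\Llog(\Omega)=L^{\Phi_{\log}}(\Omega)$ is separable, which is assertion (i).

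Next I would show that $\Phi_{\exp}$ fails the $\Delta_2$ condition near infinity: for $t>1$,
\[
    \frac{\Phi_{\exp}(2t)}{\Phi_{\exp}(t)}=\frac{e^{2t-1}}{e^{t-1}}=e^{t}\to\infty\quad\text{as }t\to\infty,
\]
so no choice of $t_0$ and $K$ can bound this quotient. By the reflexivity criterion \cite[Thm.~8.19]{adams:2003}, according to which an Orlicz space over a finite, non-atomic measure space is reflexive if and only if both its defining Young function and the complementary one are $\Delta$-regular, this yields that $\Lexp(\Omega)=L^{\Phi_{\exp}}(\Omega)$ is not reflexive, since already $\Phi_{\exp}$ itself is not $\Delta$-regular. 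Applying the same theorem with the roles reversed — $\Phi_{\exp}$ being exactly the complementary Young function of $\Phi_{\log}$ — shows that $\Llog(\Omega)$ is not reflexive either. (Alternatively, one may use \cref{thm:dual-LlogL}, $\Lexp(\Omega)=\Llog(\Omega)^*$, together with the fact that a Banach space is reflexive if and only if its dual is.) This gives assertion (ii).

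All steps are either one-line estimates or direct citations, so there is no real obstacle; the only point needing some care is to confirm that the hypotheses of \cite[Thm.~8.19,~8.21]{adams:2003} are met — Lebesgue measure on a compact subset of $\R^n$ of positive measure is finite, non-atomic and separable — and to use consistently that only the behaviour of $\Phi_{\log}$ and $\Phi_{\exp}$ near infinity matters because $\lebesgue(\Omega)<\infty$.
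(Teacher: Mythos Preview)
Your proposal is correct and follows exactly the route the paper takes: the paper simply cites \cite[Thm.~8.21\,b and Thm.~8.19]{adams:2003} together with the observation that $\Phi_{\log}$ is $\Delta$-regular while $\Phi_{\exp}$ is not, and you have supplied the explicit verification of those $\Delta_2$ estimates that the paper leaves implicit. There is nothing to add.
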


The following example shows that the desired optimality conditions cannot be derived by simply setting the Gâteaux derivative to zero.
\begin{example}\label{ex:LlogL-log-not-Lexp}
    $E$ is not Gâteaux-differentiable on $\Llog((0,1))$. Indeed,
    consider $f(x) = \exp(-1/\sqrt{x})$. 
    Then it holds that
    $f\in \Llog((0,1))$ (since $f$ is bounded) and hence that $E(f)<\infty$, but note that
    the formal Gâteaux derivative $E'(f) = \log(f)+1$ is not in $\Lexp((0,1))$.
    To see this, note that $\log(f) + 1 = 1-\frac1{\sqrt x}$ is not in $L^2((0,1))$ and thus by \cref{thm:dual-LlogL} is not in $\Lexp((0,1))$.
\end{example}

We next derive a few facts that will be useful for the analysis of the primal and dual regularized optimal transport problems.
For the first lemma, we use the elementary fact that for all $a,b>0$ we have $\log^+(ab)\leq\log^+(a)+\log^+(b)$.
\begin{lemma}\label{thm:sum}
    If $\mu\in \Llog(\Omega_1)$, $\nu\in \Llog(\Omega_2)$, 
    and $\pi = \mu\otimes \nu$ (i.e., $\pi(x_1,x_2):=\mu(x_1)\nu(x_2)$),
    then $\pi\in \Llog(\Omega_1\times\Omega_2)$.
\end{lemma}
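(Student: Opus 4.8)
The plan is to verify the defining integrability condition for $\Llog(\Omega_1\times\Omega_2)$ directly, exploiting the product structure of $\pi$ together with the inequality $\log^+(ab)\le\log^+(a)+\log^+(b)$ recorded above. First I would note that $\pi=\mu\otimes\nu$ is measurable on $\Omega_1\times\Omega_2$ (being the product of the measurable functions $\mu\circ P_1$ and $\nu\circ P_2$) and nonnegative, since $\mu$ and $\nu$ are densities of probability measures; in particular $|\pi|=\pi$.

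Applying the pointwise inequality with $a=\mu(x_1)$ and $b=\nu(x_2)$ gives
\begin{equation*}
    \int_{\Omega_1\times\Omega_2}\pi\log^+\pi\bd(x_1,x_2)
    \le\int_{\Omega_1\times\Omega_2}\mu(x_1)\nu(x_2)\bigl(\log^+\mu(x_1)+\log^+\nu(x_2)\bigr)\bd(x_1,x_2).
\end{equation*}
By Tonelli's theorem for nonnegative measurable functions, the right-hand side equals
\begin{equation*}
    \left(\int_{\Omega_1}\mu\log^+\mu\bd x_1\right)\left(\int_{\Omega_2}\nu\bd x_2\right)
    +\left(\int_{\Omega_1}\mu\bd x_1\right)\left(\int_{\Omega_2}\nu\log^+\nu\bd x_2\right).
\end{equation*}
Each of the four factors is finite: $\int_{\Omega_1}\mu\log^+\mu\bd x_1<\infty$ and $\int_{\Omega_2}\nu\log^+\nu\bd x_2<\infty$ by the assumptions $\mu\in\Llog(\Omega_1)$, $\nu\in\Llog(\Omega_2)$, while $\int_{\Omega_1}\mu\bd x_1=\int_{\Omega_2}\nu\bd x_2=1$ (more generally, these are finite because $\Llog\embed L^1$ by \cref{thm:dual-LlogL}). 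Hence $\int_{\Omega_1\times\Omega_2}\pi\log^+\pi\bd(x_1,x_2)<\infty$, which is exactly the condition defining membership of $\pi$ in $\Llog(\Omega_1\times\Omega_2)$.

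There is essentially no obstacle here; the only steps needing a moment's care are the measurability of $\pi$ on the product domain and the splitting of the iterated integral, both handled by Tonelli's theorem. One could equivalently argue via \cref{thm:finiteEntropy} by bounding the neg-entropy $E(\pi)$ rather than the $\log^+$-integral, but working with $\log^+$ keeps all integrands nonnegative and avoids the bookkeeping associated with the lower bound $s\log s\ge-1/e$.
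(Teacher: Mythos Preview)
Your argument is correct and matches the paper's proof essentially line for line: apply $\log^+(ab)\le\log^+(a)+\log^+(b)$, split the double integral via Tonelli, and bound the factors using $\Llog\embed L^1$. One small remark: the lemma as stated does not assume $\mu,\nu\ge0$ (they are arbitrary $\Llog$ functions), so your claim that $|\pi|=\pi$ because $\mu,\nu$ are probability densities is an extra assumption drawn from context; the paper simply carries absolute values throughout, writing $|\mu(x_1)\nu(x_2)|=|\mu(x_1)|\,|\nu(x_2)|$, which handles the general case with no additional effort.
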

\begin{proof}
    We simply estimate
    \begin{multline*}
        \int_{\Omega_1\times\Omega_2}|\mu(x_1)\nu(x_2)|\log^+|\mu(x_1)\nu(x_2)|\bd (x_1,x_2)\\
        \leq\int_{\Omega_1}|\mu(x_1)|\log^+|\mu(x_1)|\bd x_1\int_{\Omega_2}|\nu(x_2)|\bd x_2
        +\int_{\Omega_1}|\mu(x_1)|\bd x_1\int_{\Omega_2}|\nu(x_2)|\log^+|\nu(x_2)|\bd x_2
    \end{multline*}
    and use that all terms on the right-hand side are finite since $\Llog(\Omega)\subset L^1(\Omega)$.
\end{proof}

Next, we consider a function $\pi\in \Llog(\Omega_1\times\Omega_2)$ and
its pushforwards under the coordinate projections
\begin{equation*}
    \pushforward{(P_1)}\pi(x_1) = \int_{\Omega_{2}}\pi(x_1,x_2)\bd x_2,\qquad
    \pushforward{(P_2)}\pi(x_2) = \int_{\Omega_{1}}\pi(x_1,x_2)\bd x_1.
\end{equation*}
The following result states that these marginals are also in $\Llog$.
\begin{lemma}\label{thm:projection}
    If $\pi\in \Llog(\Omega_1\times\Omega_2)$, then $\pushforward{(P_i)}\pi\in \Llog(\Omega_i)$ for $i\in\{1,2\}$ with 
    \begin{equation*}
        \norm{\pushforward{(P_{i})}\pi}_{\Phi_{\log}}\leq\max(1,\lebesgue(\Omega_{3-i}))\norm{\pi}_{\Phi_{\log}}.
    \end{equation*}
\end{lemma}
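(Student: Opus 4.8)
The plan is to deduce the statement from Jensen's inequality applied to the convex Young function $\Phi_{\log}$ in the ``inner'' variable. By symmetry it suffices to treat $i=1$; set $m:=\lebesgue(\Omega_2)<\infty$ (finite since $\Omega_2$ is compact). First I would record that, since $\pi\in\Llog(\Omega_1\times\Omega_2)\subset L^1(\Omega_1\times\Omega_2)$ by \cref{thm:dual-LlogL}, Tonelli's theorem shows that $\pushforward{(P_1)}\pi$ is measurable and that $\abs{\pushforward{(P_1)}\pi(x_1)}\leq\int_{\Omega_2}\abs{\pi(x_1,x_2)}\bd x_2<\infty$ for a.e.\ $x_1$.

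Next, fix any $\gamma>\norm{\pi}_{\Phi_{\log}}$. Since the modular $\gamma\mapsto\int\Phi_{\log}(\abs{\pi}/\gamma)$ is nonincreasing in $\gamma$, the definition of the Luxemburg norm \eqref{eq:luxemburg} gives $\int_{\Omega_1\times\Omega_2}\Phi_{\log}(\abs{\pi}/\gamma)\bd(x_1,x_2)\leq1$. For a.e.\ $x_1$, applying Jensen's inequality to $\Phi_{\log}$ and the probability measure $m^{-1}\bd x_2$ on $\Omega_2$, and then using that $\Phi_{\log}$ is nondecreasing together with the pointwise bound from the previous step, yields
\[
    m\,\Phi_{\log}\!\left(\frac{\abs{\pushforward{(P_1)}\pi(x_1)}}{m\gamma}\right)
    \leq m\,\Phi_{\log}\!\left(\frac1m\int_{\Omega_2}\frac{\abs{\pi(x_1,x_2)}}{\gamma}\bd x_2\right)
    \leq\int_{\Omega_2}\Phi_{\log}\!\left(\frac{\abs{\pi(x_1,x_2)}}{\gamma}\right)\bd x_2.
\]
Integrating over $\Omega_1$ and invoking the modular bound, I obtain $\int_{\Omega_1}\Phi_{\log}\big(\abs{\pushforward{(P_1)}\pi(x_1)}/(m\gamma)\big)\bd x_1\leq m^{-1}$.

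It remains to convert this into the norm estimate, and this is the only place where the constant $\max(1,m)$ — reflecting the fact that Orlicz norms do not rescale simply, cf.\ \cref{rem:orlicz} — actually appears, via a short case distinction. If $m\geq1$, the right-hand side is already $\leq1$, so directly $\norm{\pushforward{(P_1)}\pi}_{\Phi_{\log}}\leq m\gamma$. If $m<1$, I use the elementary consequence of convexity and $\Phi_{\log}(0)=0$ that $\Phi_{\log}(mt)\leq m\,\Phi_{\log}(t)$ (the same estimate underlying \cref{thm:OrliczFunctionEstimate}) with $t=\abs{\pushforward{(P_1)}\pi(x_1)}/(m\gamma)$ to get $\int_{\Omega_1}\Phi_{\log}(\abs{\pushforward{(P_1)}\pi}/\gamma)\bd x_1\leq m\cdot m^{-1}=1$, hence $\norm{\pushforward{(P_1)}\pi}_{\Phi_{\log}}\leq\gamma$. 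In both cases $\norm{\pushforward{(P_1)}\pi}_{\Phi_{\log}}\leq\max(1,m)\gamma$, and letting $\gamma\downarrow\norm{\pi}_{\Phi_{\log}}$ gives both $\pushforward{(P_1)}\pi\in\Llog(\Omega_1)$ and the claimed inequality.

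I do not expect any serious obstacle: the content is entirely Jensen's inequality plus monotonicity and convexity of $\Phi_{\log}$ and the inclusion $\Llog\subset L^1$, all of which are available from the material already set up. The main thing to get right is the bookkeeping of the constant (the $m\geq1$ versus $m<1$ split, which is exactly what forces the $\max$) and checking that the Jensen step is applied to an integrand that is integrable for a.e.\ $x_1$; no duality, separability, or reflexivity is needed.
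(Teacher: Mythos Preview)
Your proposal is correct and follows essentially the same approach as the paper: apply Jensen's inequality for the convex function $\Phi_{\log}$ with respect to the normalized Lebesgue measure on the inner variable, and then handle the constant via a case distinction $\lebesgue(\Omega_{3-i})\gtrless1$ using the convexity estimate $\Phi_{\log}(mt)\leq m\,\Phi_{\log}(t)$ for $m\leq1$. The paper merely organizes the bookkeeping differently, folding the case split into a single chain of inequalities on the modular before comparing the admissible sets in the Luxemburg norm definition, whereas you first bound the modular by $m^{-1}$ and then split; the content is the same.
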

\begin{proof}
    Using the convexity of $\Phi(s) = s\log^{+}(s)$ and Jensen's inequality, we obtain 
    \begin{equation*}
        \int_{\Omega_1\times\Omega_2}\Phi\Big(\tfrac{|\pi|}{\gamma}\Big)\bd(x_{1},x_{2}) \geq \lebesgue(\Omega_1)\int_{\Omega_2}\Phi\Big(\frac1{\lebesgue(\Omega_1)}\int_{\Omega_1}\tfrac{|\pi|}{\gamma}\bd x_{1}\Big)\bd x_{2}
        \geq\int_{\Omega_2}\Phi\Big(\int_{\Omega_1}\tfrac{|\pi|}{\gamma\max(1,\lebesgue(\Omega_1))}\bd x_{1}\Big)\bd x_{2}
    \end{equation*} 
    where we used $\ell\Phi(s/\ell)\geq\Phi(s)$ for $\ell\leq1$ and $\ell\Phi(s/\ell)\geq\Phi(s/\ell)$ otherwise.
    Thus we obtain
    \begin{equation*}
        \begin{aligned}
            \norm{\pi}_{\Phi_{\log}} & = \min\left\{\gamma\geq 0: \int_{\Omega_1\times\Omega_2}\Phi\Big(\tfrac{|\pi|}{\gamma}\Big)\bd x_{1}\bd x_{2}\leq 1\right\}\\
            & \geq \min\left\{\gamma\geq 0: \int_{\Omega_2}\Phi\Big(\int_{\Omega_1}\tfrac{|\pi|}{\gamma\max(1,\lebesgue(\Omega_1))}\bd x_{1}\Big)\bd x_{2}\leq 1\right\}\\
            & = \min\left\{\gamma\geq 0: \int_{\Omega_2}\Phi\Big(\tfrac{\pushforward{(P_{2})}|\pi|}{\gamma\max(1,\lebesgue(\Omega_1))}\Big)\bd x_{2}\leq1\right\}\\
            & \geq \min\left\{\gamma\geq 0: \int_{\Omega_2}\Phi\Big(\tfrac{\pushforward{(P_{2})}\pi}{\gamma\max(1,\lebesgue(\Omega_1))}\Big)\bd x_{2}\leq1\right\}\\
            & = \frac{\norm{\pushforward{(P_{2})}\pi}_{\Phi_{\log}}}{\max(1,\lebesgue(\Omega_1))}.
        \end{aligned}
    \end{equation*}
    The claim for $\pushforward{(P_{1})}{\pi}$ follows similarly.
\end{proof}

As a corollary, we obtain a characterization of $\Lexp(\Omega)$ on tensor product spaces. 
\begin{corollary}\label{cor:Lexpoplus}
    It holds that $\alpha\in \Lexp(\Omega_1)$ and $\beta\in \Lexp(\Omega_2)$ if and only if $\alpha\oplus\beta\in \Lexp(\Omega_1\times\Omega_2)$, where
    \begin{equation*}
        (\alpha\oplus\beta)(x_{1},x_{2}) := \alpha(x_{1}) + \beta(x_{2}).
    \end{equation*}
\end{corollary}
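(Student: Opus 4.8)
The plan is to derive \cref{cor:Lexpoplus} from the duality $\Llog(\Omega)^*=\Lexp(\Omega)$ of \cref{thm:dual-LlogL} — realized, since $\Phi_{\log}$ is $\Delta$-regular, by the integral pairing $(h,g)\mapsto\int_\Omega hg\bd x$, for which we have the estimate $\abs{\int_\Omega hg\bd x}\le c_\Phi\norm h_{\Phi_{\exp}}\norm g_{\Phi_{\log}}$ valid for $h\in\Lexp(\Omega)$, $g\in\Llog(\Omega)$ — together with the two preceding lemmas. The principle is that a measurable $h$ belongs to $\Lexp(\Omega)$ precisely when $g\mapsto\int_\Omega hg\bd x$ is a bounded linear functional on $\Llog(\Omega)$; I check this on the product domain by pushing a test function down to its marginals via \cref{thm:projection}, and on a single factor by lifting a test function up to the product via \cref{thm:sum}. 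Finiteness of the intervening integrals, and hence the legitimacy of the splittings via Fubini's theorem, will come from the same pairing estimate.

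To show that $\alpha\in\Lexp(\Omega_1)$ and $\beta\in\Lexp(\Omega_2)$ imply $\alpha\oplus\beta\in\Lexp(\Omega_1\times\Omega_2)$, fix $\pi\in\Llog(\Omega_1\times\Omega_2)$. Applying \cref{thm:projection} to $\abs\pi$ (which has the same $\Phi_{\log}$-norm as $\pi$) yields $\pushforward{(P_i)}{|\pi|}\in\Llog(\Omega_i)$ with $\norm{\pushforward{(P_i)}{|\pi|}}_{\Phi_{\log}}\le\max(1,\lebesgue(\Omega_{3-i}))\norm\pi_{\Phi_{\log}}$; since $\alpha\in\Lexp(\Omega_1)$ and $\beta\in\Lexp(\Omega_2)$, the integrals $\int_{\Omega_1\times\Omega_2}\abs{\alpha(x_1)}\abs\pi\bd(x_1,x_2)$ and $\int_{\Omega_1\times\Omega_2}\abs{\beta(x_2)}\abs\pi\bd(x_1,x_2)$ are finite, which justifies both the use of Fubini below and the resulting bound
\begin{equation*}
  \Bigl|\int_{\Omega_1\times\Omega_2}(\alpha\oplus\beta)\pi\bd(x_1,x_2)\Bigr|
  \le\int_{\Omega_1}\abs\alpha\,\pushforward{(P_1)}{|\pi|}\bd x_1+\int_{\Omega_2}\abs\beta\,\pushforward{(P_2)}{|\pi|}\bd x_2
  \le c_\Phi\bigl(\max(1,\lebesgue(\Omega_2))\norm\alpha_{\Phi_{\exp}}+\max(1,\lebesgue(\Omega_1))\norm\beta_{\Phi_{\exp}}\bigr)\norm\pi_{\Phi_{\log}}.
\end{equation*}
As $\pi$ was arbitrary, $\pi\mapsto\int(\alpha\oplus\beta)\pi$ is bounded on $\Llog(\Omega_1\times\Omega_2)$, hence $\alpha\oplus\beta\in\Lexp(\Omega_1\times\Omega_2)$.

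For the converse, suppose $\alpha\oplus\beta\in\Lexp(\Omega_1\times\Omega_2)$. The embedding $\Lexp\embed L^1$ of \cref{thm:dual-LlogL} gives $\alpha(x_1)+\beta(x_2)\in L^1(\Omega_1\times\Omega_2)$; restricting to a slice $\{x_2=x_2^0\}$ with $\abs{\beta(x_2^0)}<\infty$ shows $\alpha\in L^1(\Omega_1)$, and symmetrically $\beta\in L^1(\Omega_2)$, so $\bar\beta:=\tfrac1{\lebesgue(\Omega_2)}\int_{\Omega_2}\beta\bd x_2\in\R$. Given $g\in\Llog(\Omega_1)$, the function $h(x_1,x_2):=g(x_1)/\lebesgue(\Omega_2)$ lies in $\Llog(\Omega_1\times\Omega_2)$ by \cref{thm:sum} (tensoring $g$ with a constant), with $\norm h_{\Phi_{\log}}\le C(\lebesgue(\Omega_2))\norm g_{\Phi_{\log}}$ for a constant depending only on $\lebesgue(\Omega_2)$, obtained by an elementary rescaling of $\Phi_{\log}$ as in the proof of \cref{thm:projection}. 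Since $(\alpha\oplus\beta)h\in L^1(\Omega_1\times\Omega_2)$ by the pairing estimate and $\beta(x_2)g(x_1)\in L^1(\Omega_1\times\Omega_2)$ (its integral being $\norm\beta_{L^1}\norm g_{L^1}$), also $\alpha(x_1)g(x_1)\in L^1(\Omega_1\times\Omega_2)$, so Fubini gives
\begin{equation*}
  \int_{\Omega_1}\alpha g\bd x_1=\int_{\Omega_1\times\Omega_2}(\alpha\oplus\beta)h\bd(x_1,x_2)-\bar\beta\int_{\Omega_1}g\bd x_1,
\end{equation*}
whence $\abs{\int_{\Omega_1}\alpha g\bd x_1}\le c_\Phi\norm{\alpha\oplus\beta}_{\Phi_{\exp}}\norm h_{\Phi_{\log}}+\abs{\bar\beta}\norm g_{L^1}\le C'\norm g_{\Phi_{\log}}$, using $\Llog(\Omega_1)\embed L^1(\Omega_1)$ and the bound on $\norm h_{\Phi_{\log}}$. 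Thus $g\mapsto\int\alpha g$ is bounded on $\Llog(\Omega_1)$, i.e.\ $\alpha\in\Lexp(\Omega_1)$; exchanging the roles of the factors gives $\beta\in\Lexp(\Omega_2)$.

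The routine parts are the Fubini/integrability bookkeeping and the rescalings of the Young functions; the one point that genuinely needs care is that — as flagged in \cref{rem:orlicz} — the Luxemburg norms on $\Llog$ and $\Lexp$ do not scale simply with the measure of the domain, so the domain-size-dependent factors ($\max(1,\lebesgue(\Omega_{3-i}))$ from \cref{thm:projection} and $C(\lebesgue(\Omega_2))$ in the converse) must be tracked honestly through the tensoring and marginalization steps. It is also worth recording at the outset that the duality of \cref{thm:dual-LlogL} is the integral pairing, since this is what licenses passing freely between ``$h\in\Lexp$'' and ``integration against $h$ is bounded on $\Llog$''. A natural variant of the converse is to observe $\alpha=\tfrac1{\lebesgue(\Omega_2)}\pushforward{(P_1)}{(\alpha\oplus\beta)}-\bar\beta$ and to prove — by Jensen's inequality, exactly as \cref{thm:projection} is proved — that the averaging operator is bounded $\Lexp(\Omega_1\times\Omega_2)\to\Lexp(\Omega_1)$, which together with $\1_{\Omega_1}\in L^\infty\embed\Lexp(\Omega_1)$ again gives $\alpha\in\Lexp(\Omega_1)$.
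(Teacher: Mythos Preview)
Your proof is correct, and the forward direction is exactly the paper's argument spelled out: the paper simply says that $(\alpha,\beta)\mapsto\alpha\oplus\beta$ is the adjoint of $\pi\mapsto(\pushforward{(P_1)}\pi,\pushforward{(P_2)}\pi)$ and invokes \cref{thm:projection} together with $\Llog^*=\Lexp$, which is precisely what your explicit pairing estimate encodes.

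For the converse the two proofs differ. The paper works directly with the Luxemburg norm: applying Jensen's inequality in the $x_1$-variable to $\int_{\Omega_1\times\Omega_2}\Phi_{\exp}\bigl(\tfrac{|\alpha(x_1)+\beta(x_2)|}{\gamma}\bigr)\bd(x_1,x_2)$ gives
\[
\norm{\alpha\oplus\beta}_{\Phi_{\exp}}\ \ge\ \min(1,\lebesgue(\Omega_1))\,\Bigl\|\beta+\tfrac{1}{\lebesgue(\Omega_1)}\!\int_{\Omega_1}\alpha\bd x_1\Bigr\|_{\Phi_{\exp}},
\]
so $\beta$ plus a constant lies in $\Lexp(\Omega_2)$. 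You instead stay on the duality side: tensoring a test function $g\in\Llog(\Omega_1)$ with the constant $1/\lebesgue(\Omega_2)$ and pairing with $\alpha\oplus\beta$ recovers $\int_{\Omega_1}\alpha g$ up to the finite correction $\bar\beta\int g$. The ``natural variant'' you mention at the end --- averaging $\alpha\oplus\beta$ over $\Omega_2$ and bounding the averaging operator $\Lexp(\Omega_1\times\Omega_2)\to\Lexp(\Omega_1)$ via Jensen --- \emph{is} the paper's argument. Your primary route has the virtue of being conceptually uniform (duality on both sides) and makes the Fubini and integrability bookkeeping explicit; the paper's Jensen route is shorter and yields an explicit norm inequality with a clean constant, at the cost of leaving the finiteness of $\int_{\Omega_1}\alpha\bd x_1$ implicit.
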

\begin{proof}
    The mapping $(\alpha,\beta)\mapsto \alpha\oplus\beta$ is the adjoint
    of $\pi\mapsto(\pushforward{(P_1)}\pi,\pushforward{(P_2)}\pi)$, and hence one implication follows from the fact that $\Llog(\Omega)^{*} = \Lexp(\Omega)$.

    For the other implication, we use the Luxemburg norm and Jensen's inequality with $\Phi\equiv\Phi_{\exp}$ to observe that
    \begin{equation*}
        \begin{aligned}
            \norm{\alpha\oplus\beta}_{\Phi_{\exp}} & = \min\left\{\gamma\geq 0:\int_{\Omega_1\times\Omega_2}\Phi\Big(\tfrac{\alpha(x_{1})+\beta(x_{2})}{\gamma}\Big)\bd x_{1}\bd x_{2}\leq 1\right\}\\
            & \geq \min\left\{\gamma\geq 0:\lebesgue(\Omega_1)\int_{\Omega_2}\Phi\Big(\tfrac{\frac1{\lebesgue(\Omega_1)}\int_{\Omega_1}\alpha(x_{1})\bd x_{1}+\beta(x_{2})}{\gamma}\Big)\bd x_{2}\leq 1\right\}\\
            & \geq \min\left\{\gamma\geq 0:\int_{\Omega_2}\Phi\Big(\min(1,\lebesgue(\Omega_1))\tfrac{\frac1{\lebesgue(\Omega_1)}\int_{\Omega_1}\alpha(x_{1})\bd x_{1}+\beta(x_{2})}{\gamma}\Big)\bd x_{2}\leq 1\right\}\\
            & =\min(1,\lebesgue(\Omega_1)) \left\|\beta + {\textstyle\tfrac1{\lebesgue(\Omega_1)}\int_{\Omega_1}}\alpha\bd x_1\right\|_{\Phi_{\exp}}.
        \end{aligned}
    \end{equation*}
    This shows that $\beta$ plus a constant is in $\Lexp(\Omega)$ and hence that $\beta$ itself is in $\Lexp(\Omega)$.
    Arguing similarly for $\alpha$, we obtain the claim.
\end{proof}

\section{Fenchel duality in \texorpdfstring{$\scriptstyle\meas$}{M} and \texorpdfstring{$\scriptstyle\cont$}{C}}\label{sec:fenchel-duality}

In this section, we study the primal and dual problems for entropically regularized mass transport, i.e.,
\begin{equation}\label{eq:entropyRegularized}
    \inf_{\substack{\pi\in\prob(\Omega_1\times\Omega_2),\\ \pushforward{(P_1)}{\pi}=\mu,\,\pushforward{(P_2)}{\pi}=\nu}}
    \int_{\Omega_1\times\Omega_2}c\bd\pi+\gamma\int_{\Omega_1\times\Omega_2}\pi(\log\pi-1)\bd(x_1,x_2)
    \tag{P}
\end{equation}
and
\begin{equation}\label{eq:entropyRegularizedDual-formal}
    \sup_{\substack{\alpha\in \cont(\Omega_{1})\\\beta\in\cont(\Omega_{2})}}\int_{\Omega_{1}}\alpha(x_1)\bd\mu(x_1)+\int_{\Omega_{2}}\beta(x_2)\bd\nu(x_2)-\gamma\int_{\Omega_1\times\Omega_2}\exp\left(\tfrac{-c(x_1,x_2)+\alpha(x_2)+\beta(x_1)}\gamma\right)\bd(x_1,x_2),
    \tag{D}
\end{equation}
using Fenchel duality in the canonical spaces $\meas(\Omega_1\times\Omega_2)$ and $\cont(\Omega_1)\times\cont(\Omega_2)$. Most of the results in this section are classical~\cite{Csiszar:1975,Borwein:1994}, but we include the results with proofs for the sake of completeness.

We use the general framework as outlined in, e.g.,  \cite[Sec.~III.4]{Ekeland:1999} or \cite[Chap.~9]{Attouch:2006}. All throughout the following, we assume that $\mu\in \prob(\Omega_1)$, $\nu\in \prob(\Omega_2)$, $c\in\cont(\Omega_1\times\Omega_2)$, $\gamma>0$, and that  $\Omega_1$ and $\Omega_2$ are compact.

We begin with a strong duality result for \eqref{eq:entropyRegularized} and \eqref{eq:entropyRegularizedDual-formal}.  
A similar result in $L^{1}(\Omega)$ instead of $\meas(\Omega)$ is \cite[Thm.~3.2]{Chizat:2018}, but we state the theorem and its proof because we use a slightly different setting.
\begin{proposition}[strong duality]\label{thm:primalExistence}
    The predual problem to \eqref{eq:entropyRegularized} is \eqref{eq:entropyRegularizedDual-formal},
    and strong duality holds.
    Furthermore, if the supremum in \eqref{eq:entropyRegularizedDual-formal} is finite, \eqref{eq:entropyRegularized} admits a minimizer.
\end{proposition}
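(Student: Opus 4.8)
The plan is to apply the standard Fenchel--Rockafellar duality framework of \cite[Chap.~9]{Attouch:2006} or \cite[Sec.~III.4]{Ekeland:1999} with the pairing between $\cont(\Omega_1)\times\cont(\Omega_2)$ and $\meas(\Omega_1)\times\meas(\Omega_2)$, and a second pairing between $\cont(\Omega_1\times\Omega_2)$ and $\meas(\Omega_1\times\Omega_2)$. First I would identify \eqref{eq:entropyRegularizedDual-formal} as a concave maximization problem of the form $\sup_{(\alpha,\beta)} [ -F(\Lambda^*(\alpha,\beta)) - G(\alpha,\beta) ]$, where $\Lambda^*:\cont(\Omega_1)\times\cont(\Omega_2)\to\cont(\Omega_1\times\Omega_2)$ is the bounded linear map $(\alpha,\beta)\mapsto (\alpha\oplus\beta - c)$ (or just $\alpha\oplus\beta$, absorbing $c$ into the integrand), $G(\alpha,\beta) = -\int\alpha\bd\mu - \int\beta\bd\nu$ is linear and continuous, and $F$ is the convex integral functional on $\cont(\Omega_1\times\Omega_2)$ given by $F(u) = \gamma\int_{\Omega_1\times\Omega_2}\exp\!\big(\tfrac{u(x_1,x_2)-c(x_1,x_2)}{\gamma}\big)\bd(x_1,x_2)$, which is finite and continuous on all of $\cont(\Omega_1\times\Omega_2)$ because $\Omega_1\times\Omega_2$ is compact and the integrand is continuous and bounded for each fixed continuous $u$. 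The Fenchel dual of this maximization problem is then $\inf_{\pi} [ F^*(\pi) + G^*(-\Lambda\pi) ]$, and computing these conjugates should reproduce \eqref{eq:entropyRegularized}: the conjugate $F^*$ on $\meas(\Omega_1\times\Omega_2)$ is (by the standard theory of convex integral functionals, e.g.\ \cite[Thm.~4, p.~226]{Ekeland:1999}) the functional which is $+\infty$ unless $\pi$ is absolutely continuous with nonnegative density, in which case it equals $\gamma\int \pi(\log\pi - 1)\bd(x_1,x_2) + \int c\bd\pi$ (the shift by $c$ coming from the $-c$ inside $F$), and $G^*$ composed with $-\Lambda$ enforces exactly the marginal constraints $\pushforward{(P_1)}{\pi}=\mu$, $\pushforward{(P_2)}{\pi}=\nu$.

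The key step is to verify the constraint qualification guaranteeing strong duality and attainment on the primal side of this dual pair. Because $F$ is finite and continuous at some point in the range of $\Lambda^*$ (indeed at every point, e.g.\ take $\alpha\equiv 0$, $\beta\equiv 0$, so $F(\Lambda^*(0,0)) = \gamma\int\exp(-c/\gamma)\bd(x_1,x_2) < \infty$) and $G$ is everywhere finite and continuous, the qualification of \cite[Rem.~III.4.2]{Ekeland:1999} / \cite[Thm.~9.8.2]{Attouch:2006} is satisfied, so strong duality holds: the supremum in \eqref{eq:entropyRegularizedDual-formal} equals the infimum in \eqref{eq:entropyRegularized}, and moreover the infimum in the primal problem \eqref{eq:entropyRegularized} is attained whenever the common value is finite. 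The finiteness of the common value is exactly the hypothesis ``the supremum in \eqref{eq:entropyRegularizedDual-formal} is finite''; note that the supremum is automatically bounded below (take $\alpha,\beta$ constant and adjust) and the primal infimum is bounded below by $-\gamma\lebesgue(\Omega_1\times\Omega_2)/e$ (since $s(\log s - 1)\ge -s \ge$ a constant on the relevant range, or more simply since $\pi(\log\pi-1)\ge -1/e - \pi$... ) so finiteness on one side transfers to the other via strong duality.

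The main obstacle I anticipate is the bookkeeping in the conjugate computation: one must carefully keep track of which variable lives in $\Omega_1$ versus $\Omega_2$ (the excerpt's formula \eqref{eq:entropyRegularizedDual-formal} writes $\alpha(x_2)+\beta(x_1)$, which is a typo for $\alpha(x_1)+\beta(x_2)$), and one must correctly apply the integral-functional conjugation theorem — in particular checking the integrand $j(t) = \gamma e^{(t-c)/\gamma}$ is a normal convex integrand with conjugate $j^*(p) = \gamma p(\log p - 1) + cp$ for $p\ge 0$ (and $+\infty$ for $p<0$), and that the conjugate of the integral functional is the integral of the conjugate plus a singular term which, by a standard argument (e.g.\ \cite[Thm.~4, p.~226]{Ekeland:1999} applied on the compact domain, noting that the recession function of $j$ is $+\infty$), forces $\pi \ll \lebesgue$ and eliminates any singular part. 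A secondary subtlety is justifying that the marginal map $\Lambda = (\pushforward{(P_1)}{\cdot},\pushforward{(P_2)}{\cdot})$ is indeed the pre-adjoint of $(\alpha,\beta)\mapsto\alpha\oplus\beta$ with respect to the chosen dualities, which is the content of (one direction of) \cref{cor:Lexpoplus}'s adjoint remark but now phrased between $\cont$ and $\meas$; this is a routine Fubini computation. Once these identifications are in place, the proposition follows directly from the cited duality theorem, with the ``finite supremum implies primal minimizer'' clause being precisely the attainment statement of that theorem.
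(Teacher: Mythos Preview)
Your proposal is correct and follows essentially the same route as the paper: both apply Fenchel--Rockafellar duality in the pairing $\cont/\meas$, verify the constraint qualification via the trivial choice $\alpha=\beta=0$ (Slater), and compute the conjugate of the exponential integral functional pointwise using the normal-integrand theory (the paper cites \cite{Rockafellar:1968}, you cite \cite[Thm.~4]{Ekeland:1999} for the same purpose, including the recession/singular-part argument). The paper is simply more terse, writing out the conjugation as a chain of equalities rather than explicitly naming $F$, $G$, and $\Lambda$; your more explicit bookkeeping is fine, and your observation about the $\alpha(x_2)+\beta(x_1)$ typo is correct.
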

\begin{proof}
    First, by the Riesz--Markov representation theorem, $\meas(\Omega)$ is the dual space of $\cont(\Omega)$ for compact $\Omega$.
    Furthermore, Slater's condition is fulfilled with $\alpha,\beta=0$ so that strong duality holds and -- assuming a finite supremum -- the primal problem \eqref{eq:entropyRegularized} possesses a minimizer. 
    In addition, the integrand of the last integral in \eqref{eq:entropyRegularizedDual-formal} is normal so that it can be conjugated pointwise \cite{Rockafellar:1968}.
    Carrying out the conjugation, we obtain
    \begin{align*}
        &\sup_{\substack{\alpha\in\cont(\Omega_{1})\\\beta\in\cont(\Omega_{2})}}\int_{\Omega_{1}}\alpha\bd\mu+\int_{\Omega_{2}}\beta\bd\nu-\gamma\int_{\Omega_1\times\Omega_2}\exp\left(\tfrac{-c(x_1,x_2)+\alpha(x_2)+\beta(x_1)}\gamma\right)\bd(x_1,x_2)\\
        &=\sup_{\substack{\alpha\in\cont(\Omega_{1})\\\beta\in\cont(\Omega_{2})}}
        \int_{\Omega_1}\!\alpha\bd\mu+\!\int_{\Omega_2}\!\beta\bd\nu
        +\!\int_{\Omega_1\times\Omega_2}\!\min_{\pi\geq0}(c(x_1,x_2)-\alpha(x_1)-\beta(x_2))\pi(x_1,x_2)+\gamma\pi(\log\pi-1)\bd(x_1,x_2)\\
        &=\sup_{\substack{\alpha\in\cont(\Omega_{1})\\\beta\in\cont(\Omega_{2})}}\min_{\substack{\pi\in\meas(\Omega_1\times\Omega_2)\\\pi\geq0}}
        \int_{\Omega_1\times\Omega_2}\!c\pi+\gamma\pi(\log\pi-1)\bd(x_1,x_2)+\!\int_{\Omega_1}\!\alpha\bd(\mu\!-\!\pushforward{(P_1)}\pi)+\!\int_{\Omega_2}\!\beta\bd(\nu\!-\!\pushforward{(P_2)}\pi)\\
        &=\min_{\substack{\pi\in\prob(\Omega_1\times\Omega_2)\\
                \pushforward{(P_1)}{\pi}=\mu,\,
        \pushforward{(P_2)}{\pi}=\nu}}
        \int_{\Omega_1\times\Omega_2}c\bd\pi+\gamma\int_{\Omega_1\times\Omega_2}\pi(\log\pi-1)\bd (x_1,x_2)
        \,,
    \end{align*}
    which is \eqref{eq:entropyRegularized}.
\end{proof}

\begin{remark}\label{rem:existence}
    Note that \cref{thm:primalExistence} does \emph{not} claim that the supremum is attained, i.e., that the predual problem \eqref{eq:entropyRegularizedDual-formal} admits a solution.
    The proposition should also be compared to \cite[Thm.~3.2]{Chizat:2018}, which similarly characterizes solutions under the condition that the dual problem attains a maximizer.

    In addition, solutions to \eqref{eq:entropyRegularizedDual-formal} cannot be unique since we can add and subtract constants to $\alpha$ and $\beta$, respectively, without changing the functional value. On the other hand, up to such a constant, the functional in \eqref{eq:entropyRegularizedDual-formal} is strictly concave, and therefore any solution is uniquely determined by this constant.
\end{remark}

We can use this duality argument in combination with the results of \cref{sec:finite-entropy} to address the question of existence of a solution to \eqref{eq:entropyRegularized}. (Naturally, existence under the stated condition can also be shown using Tonelli's direct method; here we give a proof based on the already shown convex duality for the sake of conciseness.)
\begin{theorem}\label{thm:solutionProperties}
    Problem \eqref{eq:entropyRegularized} admits a minimizer $\bar \pi$
    if and only if $\mu\in \Llog(\Omega_1)$ and $\nu\in \Llog(\Omega_2)$. In this case, the minimizer is unique and lies in $\Llog(\Omega_1\times\Omega_2)$.
\end{theorem}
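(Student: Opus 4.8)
The plan is to prove both directions of the equivalence separately, using the strong duality result of \cref{thm:primalExistence} together with the finite-entropy machinery of \cref{sec:finite-entropy}.

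First I would establish sufficiency: assume $\mu\in\Llog(\Omega_1)$ and $\nu\in\Llog(\Omega_2)$. By \cref{thm:sum} the product measure $\pi_0 := \mu\otimes\nu$ lies in $\Llog(\Omega_1\times\Omega_2)$; in particular $\pi_0$ is a probability measure with the correct marginals, has finite entropy $E(\pi_0)<\infty$ by \cref{thm:finiteEntropy}, and since $c$ is continuous on the compact set $\Omega_1\times\Omega_2$ the cost term $\int c\bd\pi_0$ is finite as well. Hence the infimum in \eqref{eq:entropyRegularized} is finite (it is bounded below because $s\log s\geq -1/e$, so the entropy term is bounded below on the compact domain). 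To conclude existence of a minimizer I would invoke strong duality: the value of \eqref{eq:entropyRegularized} equals the supremum of the concave problem \eqref{eq:entropyRegularizedDual-formal}, which is therefore also finite, and \cref{thm:primalExistence} then directly yields a minimizer $\bar\pi$. Uniqueness follows from strict convexity of $\pi\mapsto\int c\bd\pi+\gamma\int\pi(\log\pi-1)$ on the convex feasible set (the entropy term is strictly convex, the cost term linear, and the feasible set nonempty). Finally $\bar\pi\in\Llog(\Omega_1\times\Omega_2)$ because a minimizer has finite objective, hence finite entropy, hence lies in $\Llog$ by \cref{thm:finiteEntropy}.

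For necessity, suppose \eqref{eq:entropyRegularized} admits a minimizer $\bar\pi$. Then $\bar\pi\in\prob(\Omega_1\times\Omega_2)$ with $\int\bar\pi(\log\bar\pi-1)\bd(x_1,x_2)<\infty$, which forces $\bar\pi$ to be absolutely continuous with density of finite entropy, i.e.\ $\bar\pi\in\Llog(\Omega_1\times\Omega_2)$ by \cref{thm:finiteEntropy}. Since $\mu=\pushforward{(P_1)}{\bar\pi}$ and $\nu=\pushforward{(P_2)}{\bar\pi}$ are exactly the pushforwards under the coordinate projections, \cref{thm:projection} applies and gives $\mu\in\Llog(\Omega_1)$ and $\nu\in\Llog(\Omega_2)$ with the stated norm bound.

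The main obstacle, and the only place requiring genuine care, is the necessity direction — specifically the step identifying "$\bar\pi$ has finite objective value" with "$\bar\pi$ has a density of finite entropy." One must rule out that the regularized objective is finite while $\bar\pi$ has a singular part: if $\bar\pi=\bar\pi_{\mathrm{ac}}+\bar\pi_s$ with $\bar\pi_s$ singular, the convention for evaluating $\int\pi(\log\pi-1)\bd x$ must be understood to assign the value $+\infty$ to any measure that is not absolutely continuous with $L\log L$ density (this is implicit in writing the integral against Lebesgue measure with $\pi$ denoting the density, as fixed in the notation paragraph), so a finite-objective minimizer is automatically in $\Llog$. Once that convention is in place the argument is routine; the remaining pieces are all direct appeals to \cref{thm:finiteEntropy,thm:sum,thm:projection,thm:primalExistence}.
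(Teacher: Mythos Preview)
Your proposal is correct and follows essentially the same route as the paper: necessity via \cref{thm:finiteEntropy} and \cref{thm:projection}, sufficiency via the feasible candidate $\mu\otimes\nu$ from \cref{thm:sum} followed by \cref{thm:primalExistence}, and uniqueness from strict convexity of the entropy. The only cosmetic difference is that the paper invokes \emph{weak} duality (which already gives $\sup\text{\eqref{eq:entropyRegularizedDual-formal}}\leq\inf\text{\eqref{eq:entropyRegularized}}<\infty$) rather than strong duality to conclude finiteness of the dual supremum, and it does not dwell on the singular-part issue you raise since the notational convention fixed in the introduction already forces the entropy term to be $+\infty$ for non-absolutely-continuous $\pi$.
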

\begin{proof}
    By \cref{thm:finiteEntropy}, the energy is bounded if and only if $\bar \pi \in \Llog(\Omega_1\times\Omega_2)$. However, by \cref{thm:projection}, this is the case only if $\mu = \pushforward{(P_{1})}{\bar\pi} \in \Llog(\Omega_1)$ and similarly for $\nu$.
    This shows that the conditions are necessary to have a finite energy.
    For sufficiency,
    we first note that for $\mu\in \Llog(\Omega_1)$ and $\nu\in \Llog(\Omega_2)$, the tensor product $\pi=\mu\otimes\nu$ is a feasible candidate with finite energy by \cref{thm:sum}.
    Thus, the infimum in \eqref{eq:entropyRegularized} is finite, and weak duality -- which always holds due to the properties of supremum and infimum -- shows that the supremum in \eqref{eq:entropyRegularizedDual-formal} is finite as well. Existence of a solution for \eqref{eq:entropyRegularized} now follows from \cref{thm:primalExistence}.

    Uniqueness and regularity of the minimizer then are a direct consequence of the strict convexity of the entropy and \cref{thm:finiteEntropy}.
\end{proof}

In case a minimizer exists, we can characterize its support.
Here and throughout the rest of the paper, we use the usual shorthand $\{f>\lambda\}$ for the set $\{x\in \Omega:f(x)>\lambda\}$. We also recall from \cref{rem:orlicz} that $\1_A$ refers to the characteristic function of the set $A$. The following result can also be found in \cite[Thm.~2.7]{Borwein:1994}, but the proof there needs a constraint qualification for the primal problem which we do not need in this formulation. We present a full proof for the sake of completeness.
\begin{proposition}\label{thm:solutionSupport}
    A minimizer $\bar \pi\in \Llog(\Omega_1\times \Omega_2)$ of \eqref{eq:entropyRegularized} satisfies
    $\supp\bar\pi=\supp\mu\times\supp\nu$.
\end{proposition}
\begin{proof}
    The fact that $\supp\bar\pi\subset\supp\mu\times\supp\nu$ follows from the marginal constraints and the nonnegativity of $\bar\pi$.
    It remains to show that $\supp\bar\pi\supset\supp\mu\times\supp\nu$.
    For a contradiction, assume there is some $\hat x\in(\supp\mu\times\supp\nu)\setminus\supp\bar\pi$,
    then there exists a radius $r>0$ such that $\bar\pi=0$ on each ball $B_s(\hat x)$ with $s<r$, but $\mu(P_1(B_s(\hat x)))>0$ and $\nu(P_2(B_s(\hat x)))>0$.
    In particular, there exist $\omega_1\subset P_1(B_{r/2}(\hat x))$ and $\omega_2\subset P_2(B_{r/2}(\hat x))$ such that $\mu(\omega_i)>0$ and $\lebesgue(\omega_i)>0$ for $i=1,2$, but $\bar\pi(\omega_1\times\omega_2)=0$.
    We may choose $\omega_1,\omega_2$ small enough and $\varepsilon>0$ small enough such that there are $\tilde\omega_1\subset\Omega_1\setminus\omega_1$ and $\tilde\omega_2\subset\Omega_2\setminus\omega_2$ with nonzero Lebesgue measure and with $\bar\pi>\varepsilon$ on $(\tilde\omega_1\times\omega_2)\cup(\omega_1\times\tilde\omega_2)$.

    Let now
    $\kappa_i := \frac{\lebesgue(\omega_i)}{\lebesgue(\tilde{\omega}_i)}$ for $i=1,2$,
    $\kappa := \lebesgue(\omega_1)\cdot\lebesgue(\omega_2)$,
    and 
    \begin{equation*}
        \tilde{\pi} = \bar\pi + t\left[\1_{\omega_1\times\omega_2}(x_1,x_2) + \kappa_1 \kappa_2 \1_{\tilde{\omega}_1\times\tilde{\omega}_2}(x_1,x_2) - \kappa_1  \1_{\tilde{\omega}_1\times\omega_2}(x_1,x_2) - \kappa_2  \1_{\omega_1\times\tilde{\omega}_2}(x_1,x_2)\right]
    \end{equation*}
    for $0<t < \varepsilon/\min\{\kappa_1,\kappa_2\}$. Then $\tilde{\pi}$ is feasible. We will now argue that for small enough $t$ we have
    \begin{equation*}
        \int_M c \tilde{\pi}\bd (x_1,x_2) + \gamma\int_M \tilde{\pi}\log\tilde{\pi} \bd (x_1,x_2) \leq \int_M c \bar\pi\bd(x_1,x_2) + \gamma\int_M \bar\pi\log\bar\pi \bd (x_1,x_2),
    \end{equation*}
    where
    $M := (\omega_1 \cup \tilde\omega_1) \times (\omega_2\cup\tilde\omega_2)$.
    Note that $\bar\pi = \tilde{\pi}$ on $\Omega_1\times\Omega_2\setminus M$.

    First, consider $\int_M c \tilde{\pi} \bd (x_1,x_2)$. Since $c$ is continuous and finite,
    $\int_M c \tilde{\pi}\bd (x_1,x_2) - \int_M c \bar\pi\bd(x_1,x_2)$
    is finite and hence
    \begin{equation*}
        \int_M c \tilde{\pi} \bd (x_1,x_2) = \int_M c \bar\pi \bd(x_1,x_2) + tC_0
    \end{equation*}
    for some constant $C_0$.
    Now, consider the entropy of $\tilde{\pi}$. Since $\bar\pi = 0$ on $\omega_1 \times\omega_2$, we have
    \begin{equation*}
        \int_{\omega_1\times\omega_2} \tilde{\pi}\log\tilde{\pi}\bd (x_1,x_2) = \int_{\omega_1\times\omega_2} t\log t \bd (x_1,x_2)= \kappa t\log t.
    \end{equation*}
    Using the inequality $f(y) \geq f(x) + f'(x)(y-x)$ for convex and differentiable $f$, we can estimate
    \begin{align*}
        \int_{\omega_1\times\tilde{\omega}_2} \tilde{\pi}\log\tilde{\pi} \bd (x_1,x_2) &= \int_{\omega_1\times\tilde{\omega}_2} (\bar\pi - \kappa_2 t)\log(\bar\pi - \kappa_2 t)\bd (x_1,x_2)\\
        &\leq \int_{\omega_1\times\tilde{\omega}_2} \bar\pi\log\bar\pi \bd (x_1,x_2) - \kappa_2 t\int_{\omega_1\times\tilde{\omega}_2} \log(\bar\pi - \kappa_2 t) \bd (x_1,x_2) - \kappa t,
    \end{align*}
    and similarly for $\int_{\tilde{\omega}_1\times\omega_2} \tilde{\pi}\log\tilde{\pi} \bd (x_1,x_2)$.
    Again using the above inequality we have
    \begin{align*}
        \int_{\tilde{\omega}_1\times\tilde{\omega}_2} \tilde{\pi}\log\tilde{\pi}\bd (x_1,x_2)
        &\leq \int_{\tilde{\omega}_1\times\tilde{\omega}_2} \bar\pi\log\bar\pi\bd (x_1,x_2) + \kappa_1\kappa_2t\int_{\tilde{\omega}_1\times\tilde{\omega}_2} \log(\bar\pi+\kappa_1\kappa_2t)\bd (x_1,x_2) + \kappa t\\
        &\leq \int_{\tilde{\omega}_1\times\tilde{\omega}_2} \bar\pi\log\bar\pi\bd (x_1,x_2) + \kappa_1\kappa_2t\int_{\tilde{\omega}_1\times\tilde{\omega}_2} \bar\pi+t\kappa_1\kappa_2\bd (x_1,x_2).
    \end{align*}
    We obtain
    \begin{multline*}
        \int_M \tilde{\pi}\log\tilde{\pi} \bd (x_1,x_2)- \int_M \bar\pi\log\bar\pi \bd (x_1,x_2)
        \leq \kappa t\log t - \kappa_2 t\int_{\omega_1\times\tilde{\omega}_2}\log(\bar\pi-\kappa_2 t)\bd (x_1,x_2) - \kappa t\\
        - \kappa_1 t\int_{\tilde{\omega}_1\times\omega_2}\log(\bar\pi - \kappa_1 t)\bd (x_1,x_2) - \kappa t + \kappa_1\kappa_2t\int_{\tilde{\omega}_1\times\tilde{\omega}_2} \bar\pi+t\kappa_1\kappa_2 \bd (x_1,x_2).
    \end{multline*}
    The right-hand side is of the form  $g(t) = \kappa t\log t + h(t)$ with $h$ differentiable at $0$. We can therefore estimate
    \begin{equation*}
        g(t) \leq \kappa t\log t + C_1 t = t(\kappa \log t + C_1)
    \end{equation*}
    for some $C_1>0$ big enough and small $t$.

    Combining the estimates for cost and entropy yields
    \begin{multline*}
        \int_M c \tilde{\pi}\bd (x_1,x_2) + \gamma\int_M \tilde{\pi}\log\tilde{\pi} \bd (x_1,x_2)\\
        \leq \int_M c \bar\pi\bd(x_1,x_2) + \int_M \bar\pi\log\bar\pi \bd (x_1,x_2) + t(\gamma\kappa\log t + C_0 + \gamma C_1)
    \end{multline*}
    for $t$ small enough. However, the last term will be negative for $t$ small enough, which shows that $\bar \pi$ is not optimal in contradiction to the assumption.
\end{proof}

\bigskip

\Cref{thm:solutionProperties} shows that the natural setting for the entropically regularized problem \eqref{eq:entropyRegularized} is in fact $\Llog(\Omega)$ rather than $\meas(\Omega)$. In the next section, we will prove existence of solutions for a suitable modified dual problem of \eqref{eq:entropyRegularized} and justify a pointwise almost everywhere optimality system that can be used as a basis for deriving the Sinkhorn algorithm.

\section{Duality in \texorpdfstring{$\scriptstyle\Llog$}{LlogL} and \texorpdfstring{$\scriptstyle\Lexp$}{Lexp}}
\label{sec:duality-llogl-lexp}

In this section, we consider \eqref{eq:entropyRegularized} in the space $\Llog(\Omega_1\times\Omega_2)$. To derive a dual problem in $\Lexp(\Omega_1)\times \Lexp(\Omega_2)$, we shall perform the variable substitution 
\begin{equation*}
    \Phi(s)=
    \begin{cases}
        \infty&\text{if }s<0,\\
        s&\text{if }s\in[0,1],\\
        e^{s-1}&\text{else},
    \end{cases}
    \qquad\text{and}\qquad
    \Psi(s)=\log\Phi(s)=
    \begin{cases}
        -\infty&\text{if }s\leq 0,\\
        \log s&\text{if }s\in(0,1),\\
        {s-1}&\text{else},
    \end{cases}
\end{equation*}
see \cref{fig:PhiPsi}.
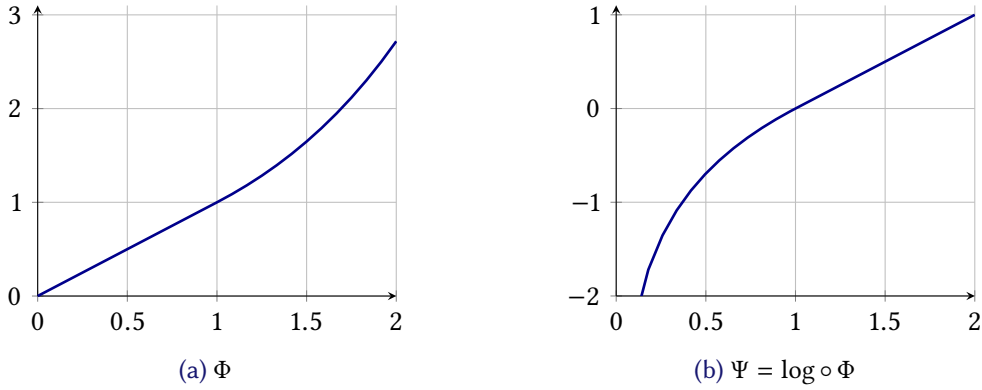
\begin{figure}[t]
    \centering
    \begin{subfigure}{0.4\textwidth}
        \centering
        \begin{tikzpicture}
            \begin{axis}[variable=\s,
                domain=0:2,
                xmin=0,
                xmax=2,
                ymin=0,
                ymax=3.1,
                axis lines=left,
                grid,
                width=\textwidth
                ]
                \addplot[
                    DarkBlue,
                    line width=1pt
                    ] {(s<=1)*s + (s>1)*exp(s-1)};
            \end{axis}
        \end{tikzpicture}
        \caption{$\Phi$}
    \end{subfigure}
    \hspace{1cm}
    \begin{subfigure}{0.4\textwidth}
        \centering
        \begin{tikzpicture}
            \begin{axis}[variable=\s,
                domain=0.1:2,
                xmin=0,
                xmax=2,
                ymin=-2,
                ymax=1.1,
                axis lines=left,
                grid,
                width=\textwidth
                ]
                \addplot[
                    DarkBlue,
                    line width=1pt
                    ] {and(s>=0.1,s<=1)*ln(s) + (s>1)*(s-1)};
            \end{axis}
        \end{tikzpicture}
        \caption{$\Psi=\log \circ\, \Phi$}
    \end{subfigure}
    \caption{Young functions for $\Llog$ and $\Lexp$}
    \label{fig:PhiPsi}
\end{figure}
Note that $\Phi$ is convex and $\Psi$ concave and that
the function $\Phi$ coincides with the Young function $\Phi_{\exp}$ from~\eqref{eq:Phiexp}, which is associated with $\Lexp$.

We now substitute $e^{\alpha/\gamma}=\Phi(u_1)$ and $e^{\beta/\gamma}=\Phi(u_2)$, i.e.,
\begin{equation}\label{eq:transformation_u}
    u_1=\begin{cases}
        e^{\alpha/\gamma}&\text{if }\alpha\leq0,\\
        \frac\alpha\gamma+1&\text{else},
    \end{cases}\qquad\qquad
    u_2=\begin{cases}
        e^{\beta/\gamma}&\text{if }\beta\leq0,\\\frac\beta\gamma+1&\text{else},
    \end{cases}
\end{equation}
which conversely implies that $\alpha = \gamma\log(\Phi(u_{1})) = \gamma\Psi(u_{1})$ and $\beta = \gamma\log(\Phi(u_{2})) = \gamma\Psi(u_{2})$.
Using this substitution, we obtain that
\begin{multline*}
    \int_{\Omega_{1}}\alpha(x_1)\bd\mu(x_1) + \int_{\Omega_{2}}\beta(x_2)\bd\nu(x_2) - \gamma\int_{\Omega_{1}\times\Omega_{2}}\exp\left(\tfrac{-c(x_1,x_2)+\alpha(x_1)+\beta(x_2)}\gamma\right)\bd(x_1,x_2) \\
    = -\gamma \int_{\Omega_1\times\Omega_2}\Phi(u_1(x_1))\Phi(u_2(x_2))e^{-\frac{c(x_1,x_2)}\gamma}\bd(x_1,x_2)+\gamma\int_{\Omega_1}\Psi(u_1)\mu\bd x_1+\gamma\int_{\Omega_2}\Psi(u_2)\nu\bd x_2.
\end{multline*}

Instead of the predual problem \eqref{eq:entropyRegularizedDual-formal}, we thus consider the reformulated problem
\begin{multline}\label{eq:entropyRegularizedDual-formal_u1_u2}
    \sup_{\substack{u_1\in \Lexp(\Omega_1)\\u_2\in \Lexp(\Omega_2)\\u_1,u_2\geq0}}
    -\gamma \int_{\Omega_1\times\Omega_2}\Phi(u_1(x_1))\Phi(u_2(x_2))e^{-\frac{c(x_1,x_2)}\gamma}\bd(x_1,x_2)\\
    +\gamma\int_{\Omega_1}\Psi(u_1)\mu\bd x_1+\gamma\int_{\Omega_2}\Psi(u_2)\nu\bd x_2\,.
    \tag{D$_{\exp}$}
\end{multline}
This substitution renders the problem nonconvex but, as we will see, allows to prove existence of solutions.

In the following, we assume that $\mu,\nu\in \Llog(\Omega)$ -- as required for existence for the primal problem -- and that $c\in\cont(\Omega_1\times\Omega_2)$. We also recall that the Luxemburg norms $\norm{\cdot}_{\Phi_{\exp}}$ and $\norm{\cdot}_{\Phi_{\log}}$ are equivalent norms on $\Lexp(\Omega)$ and $\Llog(\Omega)$, respectively. Our aim is to apply Tonelli's direct method to \eqref{eq:entropyRegularizedDual-formal_u1_u2} by showing that the functional
\begin{equation}\label{eq:dual-B}
    B(u_1,u_2) := \int_{\Omega_1\times\Omega_2}\Phi(u_1(x_1))\Phi(u_2(x_2))e^{-\frac{c(x_1,x_2)}\gamma}\bd(x_1,x_2)-\int_{\Omega_1}\Psi(u_1)\mu\bd x_1-\int_{\Omega_2}\Psi(u_2)\nu\bd x_2
\end{equation}
is radially unbounded and lower semi-continuous in the right topology. We first need the following lemma.
\begin{lemma}\label{thm:positiveExpEstimate}
    If $\norm{v\1_{\{v>0\}}}_{\Phi_{\exp}}>\max(1,\lebesgue(\Omega))$, then
    \begin{equation*}
        \norm{v\1_{\{v>0\}}}_{\Phi_{\exp}} \leq 
        \log\left(\frac1{\lebesgue(\Omega)}\int_\Omega\Phi((v+1)\1_{\{v>0\}})\bd x\right)/\log\frac e{\min(1,\lebesgue(\Omega))}\,.  
    \end{equation*}
\end{lemma}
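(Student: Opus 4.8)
The plan is to reduce to the nonnegative function $w:=v\1_{\{v>0\}}$, to write $\gamma_0:=\norm{w}_{\Phi_{\exp}}$ (so that $\gamma_0>\max(1,\lebesgue(\Omega))$ by hypothesis), and to prove the equivalent inequality $\int_{\{w>0\}}e^{w}\bd x\ge\lebesgue(\Omega)^{1-\gamma_0}e^{\gamma_0}$: taking logarithms and dividing by $\log\tfrac e{\lebesgue(\Omega)}=1-\log\lebesgue(\Omega)$ turns this into the claimed bound, with $\lebesgue(\Omega)$ in place of $\min(1,\lebesgue(\Omega))$, which coincides with the stated denominator whenever $\lebesgue(\Omega)\le1$ (and keeps $\log\tfrac e{\lebesgue(\Omega)}\ge1>0$, so that the division is legitimate). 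As a first step I would identify the right-hand side: on $\{v>0\}$ one has $(v+1)\1_{\{v>0\}}=v+1>1$, hence $\Phi((v+1)\1_{\{v>0\}})=e^{(v+1)-1}=e^{w}$ there, while on $\{v\le0\}$ the argument vanishes and $\Phi(0)=0$; thus $\int_\Omega\Phi((v+1)\1_{\{v>0\}})\bd x=\int_{\{w>0\}}e^{w}\bd x$.

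For the inequality I would fix an arbitrary $\gamma\in[1,\gamma_0)$ (available since $\gamma_0>1$), prove $\int_{\{w>0\}}e^{w}\bd x>\lebesgue(\Omega)^{1-\gamma}e^{\gamma}$, and then let $\gamma\uparrow\gamma_0$. By the definition \eqref{eq:luxemburg} of the Luxemburg norm as an infimum, $\gamma<\norm{w}_{\Phi_{\exp}}$ forces $\int_\Omega\Phi(w/\gamma)\bd x>1$. The elementary ingredient is the pointwise bound $\Phi(s/\gamma)^{\gamma}\le e^{\,s-\gamma}$ for $s\ge0$, which holds with equality for $s\ge\gamma$ and, for $0\le s\le\gamma$, reduces after taking $\gamma$-th roots to $r\le e^{r-1}$ with $r=s/\gamma$. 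Since $\gamma\ge1$ makes $t\mapsto t^{\gamma}$ convex, Jensen's inequality on the probability space $(\Omega,\lebesgue(\Omega)^{-1}\lebesgue)$, together with $\Phi(0)^{\gamma}=0$, gives
\[
    \Big(\tfrac1{\lebesgue(\Omega)}\int_\Omega\Phi(w/\gamma)\bd x\Big)^{\!\gamma}\le\tfrac1{\lebesgue(\Omega)}\int_\Omega\Phi(w/\gamma)^{\gamma}\bd x\le\tfrac{e^{-\gamma}}{\lebesgue(\Omega)}\int_{\{w>0\}}e^{w}\bd x .
\]
Since the left-hand side is strictly larger than $\lebesgue(\Omega)^{-\gamma}$ (because $\int_\Omega\Phi(w/\gamma)\bd x>1$), rearranging yields $\int_{\{w>0\}}e^{w}\bd x>\lebesgue(\Omega)^{1-\gamma}e^{\gamma}$; letting $\gamma\uparrow\gamma_0$ — note that $\Phi(w/\gamma)$ decreases as $\gamma$ grows, so no integrability issue arises — completes this step.

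The analytic content is routine: the pointwise bound $\Phi(s/\gamma)^{\gamma}\le e^{s-\gamma}$, the convexity of $t\mapsto t^{\gamma}$ for $\gamma\ge1$, and the characterization of $\norm{\cdot}_{\Phi_{\exp}}$ as an infimum. The step I expect to require genuine care is the bookkeeping with the domain measure: the factor $\lebesgue(\Omega)^{1-\gamma}$ — equivalently the $\log\tfrac e{\lebesgue(\Omega)}$ in the denominator — appears only if the Jensen step is performed against the \emph{normalized} Lebesgue measure, and one then has to track the prefactors $\lebesgue(\Omega)^{\pm1}$ and $e^{\pm\gamma}$ and check that they combine exactly as claimed. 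The hypothesis enters through $\gamma_0>1$ (which provides an admissible exponent $\gamma\ge1$) and through the resulting requirement $\lebesgue(\Omega)<e$, needed for $\log\tfrac e{\lebesgue(\Omega)}>0$ so that one may divide; this is in particular met whenever $\lebesgue(\Omega)\le1$.
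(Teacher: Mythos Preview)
Your approach is essentially the paper's: identify the right-hand integral as $\int_{\{v>0\}}e^{v}\bd x$, apply Jensen with a power function against normalized Lebesgue measure, and pass to the limit in the auxiliary parameter. The only cosmetic difference is that you use the convex map $t\mapsto t^{\gamma}$ while the paper uses the concave $t\mapsto t^{1/\gamma_\varepsilon}$; after rearranging, the two chains of inequalities coincide. For $\lebesgue(\Omega)\le1$ your argument is correct and complete.

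Your hesitation about $\lebesgue(\Omega)>1$ is not a gap in your reasoning but a genuine obstruction: the inequality as printed is \emph{false} in that regime. Take $v=M\1_\Omega$ with $M>1$ and $L:=\lebesgue(\Omega)>1$. Then $\Phi(M/\gamma)\le1/L<1$ forces $M/\gamma\le1/L$, so $\norm{v\1_{\{v>0\}}}_{\Phi_{\exp}}=ML>\max(1,L)$, yet the right-hand side equals $\log(e^{M})/\log(e/1)=M<ML$. The paper's proof hides this by asserting $\int_\Omega\Phi(v\1_{\{v>0\}}/\gamma_\varepsilon)\bd x>\norm{v\1_{\{v>0\}}}_{\Phi_{\exp}}$, which does not follow from the Luxemburg definition (one only gets $>1$); with the correct bound the paper's chain, like yours, terminates in $\min(1,1/\lebesgue(\Omega))$ rather than $1$ and therefore closes only when $\lebesgue(\Omega)\le1$. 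What you actually establish --- the same estimate with $\lebesgue(\Omega)$ in place of $\min(1,\lebesgue(\Omega))$, valid whenever $\lebesgue(\Omega)<e$ --- is the correct statement, and it is all that the subsequent coercivity argument (\cref{lem:coercive}) needs: a bound of the form $\norm{\max(u_2^n-1,0)}_{\Phi_{\exp}}\le\text{const}\cdot\log\int_{\Omega_2}\Phi(u_2^n)\bd x_2$.
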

\begin{proof}
    Set $\gamma_\varepsilon=\norm{v\1_{\{v>0\}}}_{\Phi_{\exp}}-\varepsilon$ for some $\varepsilon>0$ such that still $\gamma_{\epsilon}>\max(1,\lebesgue(\Omega))$.
    Then it holds that $\int_{\Omega}\Phi\big(\tfrac{v\1_{\{v>0\}}}{\gamma_{\epsilon}}\big)\bd x >\norm{v\1_{\{v>0\}}}_{\Phi_{\exp}} >\max(1,\lebesgue(\Omega))$.

    By Jensen's inequality we have
    \begin{equation*}
        \begin{aligned}
            \frac{\min(1,\lebesgue(\Omega))}{e}\left(\frac1{\lebesgue(\Omega)}\int_\Omega\Phi((v+1)\1_{\{v>0\}})\bd x\right)^{\frac1{\gamma_\varepsilon}}
            & =\frac{\min(1,\lebesgue(\Omega))}{e}\left(\frac1{\lebesgue(\Omega)}\int_\Omega \1_{\{v>0\}}e^v\bd x\right)^{\frac1{\gamma_\varepsilon}}\\
            &\geq\frac{\min(1,\tfrac1{\lebesgue(\Omega)})}{e}\int_\Omega \1_{\{v>0\}}\left(e^v\right)^{\frac1{\gamma_\varepsilon}}\bd x\\
            & = \min(1,\tfrac1{\lebesgue(\Omega)}) \int_{\Omega}\1_{\{v>0\}}e^{\tfrac{v}{\gamma_{\epsilon}}-1}\bd x\\
            &\geq \min(1,\tfrac1{\lebesgue(\Omega)})\int_\Omega\Phi\left(\tfrac{v\1_{\{v>0\}}}{\gamma_\varepsilon}\right)\bd x\\
            & >\min(1,\tfrac1{\lebesgue(\Omega)})\max(1,\lebesgue(\Omega)) = 1.
        \end{aligned}
    \end{equation*}
    Taking logarithms, we deduce that $\gamma_\varepsilon<\log\left(\frac1{\lebesgue(\Omega)}\int_\Omega\Phi((v+1)\1_{\{v>0\}})\bd x\right)/\log\frac e{\min(1,\lebesgue(\Omega))}$, and
    letting $\varepsilon\to 0$ yields the claim.
\end{proof}

We next capture the invariance inherited from \eqref{eq:entropyRegularizedDual-formal} as described in \cref{rem:existence}.

\begin{lemma}\label{lem:invariance}
    Let $u_i\in\Lexp(\Omega_i)$, $i=1,2$, with $B(u_1,u_2)<\infty$.
    If for an arbitrary $K\in\R$ we set $\tilde u_1=\Psi^{-1}(\Psi(u_1)-K)$ and $\tilde u_2=\Psi^{-1}(\Psi(u_1)+K)$,
    then $B(\tilde u_1,\tilde u_2)=B(u_1,u_2)$.
    In particular, by choosing $K$ appropriately, we can always achieve $\int_{\Omega_1}\Phi(\tilde u_1)\bd x_1= 1$.
\end{lemma}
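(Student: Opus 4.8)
The plan is to leverage the relation $\Psi=\log\Phi$ and the fact that $\Psi$ restricts to a strictly increasing bijection $(0,\infty)\to\R$, extended by $\Psi^{-1}(-\infty):=0$. Two elementary facts about $\Psi^{-1}$ carry the proof. First, $\Phi\circ\Psi^{-1}=\exp$ on $[-\infty,\infty)$ and $\Psi\circ\Psi^{-1}(t)=t$ for all $t\in[-\infty,\infty)$; hence one has the pointwise identities $\Phi(\tilde u_1)=\exp(\Psi(u_1)-K)=e^{-K}\Phi(u_1)$, $\Phi(\tilde u_2)=e^{K}\Phi(u_2)$, $\Psi(\tilde u_1)=\Psi(u_1)-K$, and $\Psi(\tilde u_2)=\Psi(u_2)+K$. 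Second, $\Psi^{-1}$ is $1$-Lipschitz on $\R$ (it equals $t\mapsto e^t$ for $t\le0$ and $t\mapsto t+1$ for $t\ge0$, both of slope $\le1$), so $\|\tilde u_i-u_i\|_{L^\infty(\Omega_i)}\le|K|$; since $L^\infty(\Omega_i)\embed\Lexp(\Omega_i)$ this gives $\tilde u_i\in\Lexp(\Omega_i)$, and $\tilde u_i\ge0$ because $\Psi^{-1}\ge0$. Thus $(\tilde u_1,\tilde u_2)$ is again admissible in \eqref{eq:entropyRegularizedDual-formal_u1_u2}.

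Given these identities, the invariance is a direct substitution into \eqref{eq:dual-B}. In the bilinear term, $\Phi(\tilde u_1(x_1))\Phi(\tilde u_2(x_2))=e^{-K}\Phi(u_1(x_1))\cdot e^{K}\Phi(u_2(x_2))=\Phi(u_1(x_1))\Phi(u_2(x_2))$, so that term is unchanged (and, being nonnegative, is well defined in any case). In the linear terms, $-\int_{\Omega_1}\Psi(\tilde u_1)\mu\bd x_1=-\int_{\Omega_1}\Psi(u_1)\mu\bd x_1+K\int_{\Omega_1}\mu\bd x_1=-\int_{\Omega_1}\Psi(u_1)\mu\bd x_1+K$ because $\mu\in\prob(\Omega_1)$, and likewise the $u_2$-term picks up $-K$; the two shifts cancel, so $B(\tilde u_1,\tilde u_2)=B(u_1,u_2)$. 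The hypothesis $B(u_1,u_2)<\infty$, together with $\Psi(s)\le s-1$ (so that $\int_{\Omega_1}\Psi(u_1)\mu\bd x_1$ is well defined in $[-\infty,\infty)$), makes this splitting legitimate.

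For the normalization I would first show that $B(u_1,u_2)<\infty$ forces $I_1:=\int_{\Omega_1}\Phi(u_1)\bd x_1\in(0,\infty)$. Write $B(u_1,u_2)$ as the sum of the nonnegative bilinear term, $\int_{\Omega_1}(-\Psi(u_1))\mu\bd x_1$, and $\int_{\Omega_2}(-\Psi(u_2))\nu\bd x_2$. Using $-\Psi(s)\ge1-s$ and that $u_1\in\Lexp(\Omega_1)=\Llog(\Omega_1)^*$ pairs finitely with $\mu\in\Llog(\Omega_1)$ (and likewise for $u_2,\nu$), each of the two latter terms is bounded below by a finite constant, so $B<\infty$ forces the bilinear term to be finite. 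Since $c$ is continuous on the compact set $\Omega_1\times\Omega_2$ we have $e^{-c/\gamma}\ge m>0$, and Tonelli yields $m\,I_1\int_{\Omega_2}\Phi(u_2)\bd x_2\le\int_{\Omega_1\times\Omega_2}\Phi(u_1)\Phi(u_2)e^{-c/\gamma}\bd(x_1,x_2)<\infty$; moreover $I_1>0$, since $I_1=0$ would give $u_1=0$ a.e.\ and hence $\int_{\Omega_1}\Psi(u_1)\mu\bd x_1=-\infty$, contradicting $B<\infty$. Thus $0<I_1<\infty$, and choosing $K=\log I_1$ gives $\int_{\Omega_1}\Phi(\tilde u_1)\bd x_1=e^{-K}I_1=1$, so $\gamma=1$ is admissible in \eqref{eq:luxemburg} and $\|\tilde u_1\|_{\Phi_{\exp}}\le1$. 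For the reverse inequality I would use that $\Phi$ is strictly increasing on $[0,\infty)$: for $\gamma<1$ one has $\Phi(\tilde u_1/\gamma)\ge\Phi(\tilde u_1)$ with strict inequality on $\{\tilde u_1>0\}$, a set of positive measure, so $\int_{\Omega_1}\Phi(\tilde u_1/\gamma)\bd x_1>1$; hence no $\gamma<1$ is admissible and $\|\tilde u_1\|_{\Phi_{\exp}}=1$. (Alternatively, the lower bound follows from \cref{thm:OrliczFunctionEstimate}.)

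The invariance identity itself is merely the substitution of the second paragraph; the one point that genuinely needs attention is the normalization step, i.e.\ extracting $0<I_1<\infty$ from $B(u_1,u_2)<\infty$, which forces one to unpack the finiteness of $B$ term by term.
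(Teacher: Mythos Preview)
Your proof is correct and follows essentially the same idea as the paper's: both rest on the shift invariance $(\alpha,\beta)\mapsto(\alpha-\gamma K,\beta+\gamma K)$ of \eqref{eq:entropyRegularizedDual-formal}, which via the substitution $\alpha=\gamma\Psi(u_1)$, $\beta=\gamma\Psi(u_2)$ becomes exactly the map $(u_1,u_2)\mapsto(\tilde u_1,\tilde u_2)$. The paper states this in a single sentence; you instead carry out the substitution directly in $B$, which is the same computation made explicit.

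Where you go noticeably beyond the paper is the ``in particular'' clause: the paper does not spell out why a suitable $K$ exists or why $\int_{\Omega_1}\Phi(\tilde u_1)\bd x_1=1$ forces $\|\tilde u_1\|_{\Phi_{\exp}}=1$. Your argument that $B(u_1,u_2)<\infty$ (together with $\mu,\nu\in\Llog$ and the positivity bound $e^{-c/\gamma}\ge m>0$) yields $0<I_1<\infty$, and your verification of the Luxemburg norm via strict monotonicity of $\Phi$, fill in details the paper leaves to the reader. So the approach is the same, but your version is self-contained where the paper's is a pointer.
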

\begin{proof}
    Note that $u_1 > 0$ $\mu$-a.e. and $u_2>0$ $\nu$-a.e. as $B(u_1,u_2) < \infty$. By construction, the same holds for $\tilde u_1$ and $\tilde u_2$.
    The first statement is now a direct consequence of the invariance of the cost functional in \eqref{eq:entropyRegularizedDual-formal} under the mapping $(\alpha,\beta)\mapsto(\alpha-K,\beta+K)$. For the second statement, first note that $\int_{\Omega_1}\Phi(\tilde u_1)\bd x_1$ is continuous in $K$. Moreover,
    \begin{align*}
        \int_{\Omega_1}\Phi(\tilde u_1)\bd x_1 &\to 0,\quad\text{for } K\to\infty\quad\text{and}\\
        \int_{\Omega_1}\Phi(\tilde u_1)\bd x_1 &\to \infty,\quad\text{for } K\to -\infty
    \end{align*}
    so that the assertion follows by the intermediate value theorem.
\end{proof}

\begin{remark}
    While $\|\tilde u_1\|_{\Phi_{\exp}}=1$ implies $\int_{\Omega_1}\Phi(\tilde u_1)\bd x_1\leq 1$ and $\int_{\Omega_1}\Phi(\tilde u_1)\bd x_1 = 1$ implies $\norm{\tilde u_1}_{\Phi_{\exp}}\leq 1$, in general we cannot achieve both equalities simultaneously due to \cref{rem:orlicz}.
\end{remark}

Modulo this invariance we now obtain coercivity.

\begin{lemma}\label{lem:coercive}
    Let $u_1^n$, $n=1,2,\ldots$, be a sequence in $\Lexp(\Omega_1)$ such that $\int_{\Omega_1}\Phi(u^n_1) = 1$ for all $n$.
    Then $\norm{u_2^n}_{\Phi_{\exp}}\to \infty$ for $n\to\infty$ implies $B(u_1^n,u_2^n)\to \infty$ as $n\to\infty$.
\end{lemma}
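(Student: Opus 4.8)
The plan is to bound $B(u_1^n,u_2^n)$ below by $\tfrac\delta2\int_{\Omega_2}\Phi(u_2^n)\bd x_2$ minus a constant independent of $n$, for a suitable $\delta>0$, and then to conclude from \cref{thm:OrliczFunctionEstimate} that this lower bound diverges. Throughout, the $u_i^n$ are understood to be nonnegative, as in \eqref{eq:entropyRegularizedDual-formal_u1_u2}. First I would treat the coupling term in \eqref{eq:dual-B}: since $\Phi\ge0$ and $c$ is continuous on the compact set $\Omega_1\times\Omega_2$, we have $e^{-c(x_1,x_2)/\gamma}\ge\delta:=e^{-(\max_{\Omega_1\times\Omega_2}c)/\gamma}>0$, so by Tonelli's theorem
\begin{equation*}
    \int_{\Omega_1\times\Omega_2}\Phi(u_1^n(x_1))\Phi(u_2^n(x_2))e^{-c(x_1,x_2)/\gamma}\bd(x_1,x_2)\ge\delta\Big(\int_{\Omega_1}\Phi(u_1^n)\bd x_1\Big)\Big(\int_{\Omega_2}\Phi(u_2^n)\bd x_2\Big).
\end{equation*}
Since $\norm{u_1^n}_{\Phi_{\exp}}=1$, \cref{thm:OrliczFunctionEstimate} gives $\int_{\Omega_1}\Phi(u_1^n)\bd x_1\ge1$, so the coupling term is bounded below by $\delta\int_{\Omega_2}\Phi(u_2^n)\bd x_2$.

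Next I would dispose of the penalty in $u_1^n$. From the explicit form of $\Psi$ one reads off $\Psi(s)\le\max\{s-1,0\}\le s$ for $s\ge0$, so with $\mu\ge0$ and the duality $\Llog(\Omega_1)^*=\Lexp(\Omega_1)$ (\cref{thm:dual-LlogL}, with equivalence constant $c_\Phi$),
\begin{equation*}
    -\int_{\Omega_1}\Psi(u_1^n)\mu\bd x_1\ge-\int_{\Omega_1}u_1^n\mu\bd x_1\ge-c_\Phi\norm{u_1^n}_{\Phi_{\exp}}\norm{\mu}_{\Phi_{\log}}=-c_\Phi\norm{\mu}_{\Phi_{\log}}=:-C_\mu,
\end{equation*}
which is independent of $n$.

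The crux — and the step I expect to be the main obstacle — is to absorb the penalty in $u_2^n$ into the coupling term; this is where $\nu\in\Llog(\Omega_2)$ is genuinely needed. I would introduce $g(b):=\sup_{t\ge0}\big(b\Psi(t)-\tfrac\delta2\Phi(t)\big)$ for $b\ge0$, so that $\delta\Phi(t)-b\Psi(t)\ge\tfrac\delta2\Phi(t)-g(b)$ holds pointwise. Substituting $r=\Phi(t)$ and using $\Psi=\log\circ\,\Phi$ reduces the supremum to the elementary $\sup_{r>0}(b\log r-\tfrac\delta2 r)$, giving $g(b)=b\log\tfrac{2b}{e\delta}$ for $b>0$, $g(0)=0$, and hence $g(b)\le b\log^+\!\tfrac{2b}{e\delta}\le b\log^+b+C_\delta\,b$ by $\log^+(xy)\le\log^+x+\log^+y$. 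Since $\nu$ has finite entropy (\cref{thm:finiteEntropy}) and lies in $L^1$, the constant $C_\nu:=\int_{\Omega_2}g(\nu)\bd x_2\le\int_{\Omega_2}\nu\log^+\nu\bd x_2+C_\delta\norm{\nu}_{L^1}$ is finite. (Note also that $\int_{\Omega_2}\Psi(u_2^n)\nu\bd x_2<\infty$ by the same duality, so $B(u_1^n,u_2^n)$ is well defined in $(-\infty,\infty]$.) Applying the pointwise inequality with $b=\nu(x_2)$, $t=u_2^n(x_2)$ and combining with the first two steps yields
\begin{equation*}
    B(u_1^n,u_2^n)\ge\delta\int_{\Omega_2}\Phi(u_2^n)\bd x_2-C_\mu-\int_{\Omega_2}\Psi(u_2^n)\nu\bd x_2\ge\tfrac\delta2\int_{\Omega_2}\Phi(u_2^n)\bd x_2-C_\mu-C_\nu.
\end{equation*}
For $n$ large enough that $\norm{u_2^n}_{\Phi_{\exp}}\ge1$, \cref{thm:OrliczFunctionEstimate} gives $\int_{\Omega_2}\Phi(u_2^n)\bd x_2\ge\norm{u_2^n}_{\Phi_{\exp}}$, so $B(u_1^n,u_2^n)\ge\tfrac\delta2\norm{u_2^n}_{\Phi_{\exp}}-C_\mu-C_\nu\to\infty$.

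It is worth stressing why this last step is the delicate one: a naive Hölder estimate of the $u_2^n$-penalty would only give $-\int_{\Omega_2}\Psi(u_2^n)\nu\bd x_2\ge-c_\Phi\norm{u_2^n}_{\Phi_{\exp}}\norm{\nu}_{\Phi_{\log}}$, which sits on exactly the same scale $\norm{u_2^n}_{\Phi_{\exp}}$ as the available lower bound $\delta\norm{u_2^n}_{\Phi_{\exp}}$ for the coupling term, with no control of the sign of the difference. The finite entropy of $\nu$ is precisely what lets one pay for the penalty with an arbitrarily small fraction ($\tfrac\delta2$) of the \emph{diverging} coupling term at the cost of a fixed additive constant — this trade is the only nontrivial ingredient.
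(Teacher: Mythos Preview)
Your proof is correct. The overall architecture matches the paper's: lower-bound the coupling term by $\delta\int_{\Omega_2}\Phi(u_2^n)\bd x_2$ via $e^{-c/\gamma}\ge\delta$ and $\int_{\Omega_1}\Phi(u_1^n)\bd x_1\ge1$, dispose of the $u_1^n$-penalty by a uniform constant, and then show the $u_2^n$-penalty is dominated by the diverging coupling term.

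The genuine difference lies in how that last step is carried out. The paper estimates $\int_{\Omega_2}\Psi(u_2^n)\nu\bd x_2\le c_\Phi\norm{\max(u_2^n-1,0)}_{\Phi_{\exp}}\norm{\nu}_{\Phi_{\log}}$ and then invokes the auxiliary \cref{thm:positiveExpEstimate} to bound $\norm{\max(u_2^n-1,0)}_{\Phi_{\exp}}$ logarithmically in $\int_{\Omega_2}\Phi(u_2^n)\bd x_2$, so that the linear term wins. You instead use a pointwise Fenchel--Young-type trade: $b\Psi(t)\le\tfrac\delta2\Phi(t)+g(b)$ with $g(b)=b\log\tfrac{2b}{e\delta}$, and integrate; the finiteness of $\int_{\Omega_2}g(\nu)\bd x_2$ is exactly the finite-entropy assumption on $\nu$. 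This bypasses \cref{thm:positiveExpEstimate} entirely and makes the role of $\nu\in\Llog$ more transparent: it is precisely what makes the conjugate cost $g(\nu)$ integrable. The paper's route, on the other hand, keeps everything phrased in terms of Luxemburg norms and reuses a lemma that is independently useful. Both arguments exploit the same scale separation (exponential in the coupling versus linear in the penalty), just packaged differently.
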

\begin{proof}
    Without loss of generality we may assume the $u_2^n$ to be nonnegative, since replacing $u_2^n$ with its absolute value decreases $B(u_1^n,u_2^n)$ without changing $\norm{u_2^n}_{\Phi_{\exp}}$.
    Due to $\int_{\Omega_1}\Phi(u_1^n)\bd x_1 = 1$ we have  $\norm{u_1^n}_{\Phi_{\exp}}\leq 1$ and thus
    \begin{equation*}
        \int_{\Omega_1}\Psi\left(u_1^n\right)\mu\bd x_1\leq\int_{\Omega_1}(u_1^n-1)\mu\bd x_1\leq c_\Phi\norm{u_1^n}_{\Phi_{\exp}}\norm{\mu}_{\Phi_{\log}}-1\leq c_\Phi\norm{\mu}_{\Phi_{\log}}-1,
    \end{equation*}
    where $c_\Phi$ denotes the generic equivalence constant for the duality from \cref{thm:dual-LlogL}.
    Analogously we obtain
    \begin{equation*}
        \int_{\Omega_2}\Psi\left(u_2^n\right)\nu\bd x_2\leq\int_{\Omega_2}\max(u_2^n-1,0)\nu\bd x_2\leq c_\Phi\norm{\max(u_2^n-1,0)}_{\Phi_{\exp}}\norm{\nu}_{\Phi_{\log}}.
    \end{equation*}
    Hence for $C=\exp(-\max_{\Omega_1\times\Omega_2}c/\gamma)$, we have
    \begin{equation*}
        \begin{aligned}
            B(u_1^n,u_2^n)
            &\geq C\int_{\Omega_1}\Phi(u_1^n)\bd x_1\int_{\Omega_2}\Phi(u_2^n)\bd x_2-\int_{\Omega_1}\Psi\left(u_1^n\right)\mu\bd x_1-\int_{\Omega_2}\Psi\left(u_2^n\right)\nu\bd x_2\\
            &\geq C\int_{\Omega_2}\Phi(u_2^n)\bd x_2-c_\Phi\norm{\mu}_{\Phi_{\log}}+1-c_\Phi\norm{\max(u_2^n-1,0)}_{\Phi_{\exp}}\norm{\nu}_{\Phi_{\log}}.
        \end{aligned}
    \end{equation*}
    Since $\norm{u_2^n}_{\Phi_{\exp}}\to\infty$ and $u_2^n$ is nonnegative, we also have $\norm{\max(u_2^n-1,0)}_{\Phi_{\exp}}\to\infty$ as $n\to\infty$ and therefore
    \begin{equation*}
        \norm{\max(u_2^n-1,0)}_{\Phi_{\exp}}\leq\log\left(\frac1{\lebesgue(\Omega_2)}\int_{\Omega_2}\Phi(u_2^n)\bd x_2\right)/\log\left(\frac e{\min(1,\lebesgue(\Omega_2))}\right)\quad\text{ for $n$ large enough}
    \end{equation*}
    by \cref{thm:positiveExpEstimate}. 
    Now \cref{thm:OrliczFunctionEstimate} implies that $\int_{\Omega_{2}}\Phi(u^n_{2})\bd x\to\infty$ for $n\to \infty$ and therefore that
    \begin{equation*}
        B(u_1^n,u_2^n)
        \geq C\int_{\Omega_{2}}\Phi(u^n_{2})\bd x - c_\Phi\norm{\mu}_{\Phi_{\log}}+1-c_\Phi\frac{\log\left(\frac1{\lebesgue(\Omega_2)}\int_{\Omega_2}\Phi(u_2^n)\bd x_2\right)}{\log\Big(\tfrac e{\min(1,\lebesgue(\Omega_2))}\Big)}\norm{\nu}_{\Phi_{\log}}
        \to\infty,
    \end{equation*}
    which yields the desired contradiction.
\end{proof}

\begin{lemma}\label{lem:wlsc}
    $B$ is sequentially weakly-$*$ lower semi-continuous on $\Lexp(\Omega_1)\times \Lexp(\Omega_2)$.
\end{lemma}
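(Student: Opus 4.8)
The plan is to show $\liminf_n B(u_1^n,u_2^n)\ge B(u_1,u_2)$ whenever $u_i^n\weakstarto u_i$ in $\Lexp(\Omega_i)$, $i=1,2$, by treating the three contributions to $B$ in \eqref{eq:dual-B} separately: the two concave entropy terms $v\mapsto -\int\Psi(v)\mu\bd x_1$ and $v\mapsto -\int\Psi(v)\nu\bd x_2$, and the nonconvex bilinear term $I_1(u_1,u_2):=\int_{\Omega_1\times\Omega_2}\Phi(u_1(x_1))\Phi(u_2(x_2))e^{-c/\gamma}\bd(x_1,x_2)$. First I would discard the trivial case $\liminf_n B(u_1^n,u_2^n)=\infty$, pass to a subsequence (not relabeled) realizing the $\liminf$ with $B(u_1^n,u_2^n)\le C<\infty$, and record two facts. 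Since $\Lexp(\Omega_i)\embed L^2(\Omega_i)\subset\Llog(\Omega_i)$ by \cref{thm:dual-LlogL}, a weak-$*$ convergent -- hence norm-bounded -- sequence in $\Lexp(\Omega_i)$ converges weakly in $L^2(\Omega_i)$ to the same limit; this is the topology I would argue in. Moreover, since $\Phi\equiv+\infty$ on $(-\infty,0)$, $\Psi\equiv-\infty$ on $(-\infty,0]$, and $\mu,\nu$ are probability densities, finiteness of $B(u_1^n,u_2^n)$ forces $u_i^n\ge 0$ a.e.\ and $\mu(\{u_1^n=0\})=\nu(\{u_2^n=0\})=0$; in particular $u_i\ge 0$ a.e.

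For the entropy terms I would exploit $\Psi(s)\le s-1$ and write $-\Psi(s)=\rho(s)-(s-1)$ with $\rho(s):=(s-1)-\Psi(s)\ge 0$ convex and lower semicontinuous, so that $-\int\Psi(u_i^n)\mu_i\bd x_i=\int\rho(u_i^n)\mu_i\bd x_i-\int(u_i^n-1)\mu_i\bd x_i$ (writing $\mu_1=\mu$, $\mu_2=\nu$). The affine part converges by weak-$*$ continuity since $\mu,\nu\in\Llog$, while $v\mapsto\int\rho(v)\mu_i\bd x_i$ is convex and, its integrand being nonnegative, lower semicontinuous for strong $L^2$-convergence by Fatou's lemma, hence weakly $L^2$-lower semicontinuous. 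This yields $\liminf_n(-\int\Psi(u_i^n)\mu_i\bd x_i)\ge -\int\Psi(u_i)\mu_i\bd x_i$ for $i=1,2$. As a side effect to record here, $u_i\not\equiv 0$: otherwise the corresponding term would satisfy $-\int\Psi(u_i^n)\mu_i\bd x_i\to-\int\Psi(0)\mu_i\bd x_i=+\infty$ while the remaining terms of $B$ stay bounded below, contradicting $B(u_1^n,u_2^n)\le C$; thus $\int u_i\bd x_i>0$.

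The bilinear term is the heart of the matter. Since $\Psi(u_i^n)\le u_i^n-1$, the entropy terms are bounded above, so $I_1(u_1^n,u_2^n)$ is bounded; and $\Phi(s)\ge s$ gives $\int\Phi(u_i^n)\bd x_i\ge\int u_i^n\bd x_i\to\int u_i\bd x_i>0$, so from $I_1(u_1^n,u_2^n)\ge e^{-\max c/\gamma}\bigl(\int\Phi(u_1^n)\bd x_1\bigr)\bigl(\int\Phi(u_2^n)\bd x_2\bigr)$ I conclude that $\Phi(u_i^n)$ is bounded in $L^1(\Omega_i)$. Passing to a further subsequence, the measures $\sigma_i^n\in\meas(\Omega_i)$ with Lebesgue density $\Phi(u_i^n)$ satisfy $\sigma_i^n\weakstarto\lambda_i$ for nonnegative $\lambda_i$; testing against nonnegative $\phi\in\cont(\Omega_i)$ and using weak $L^2$-lower semicontinuity of $v\mapsto\int\Phi(v)\phi\bd x_i$ (convex, nonnegative integrand) shows $\Phi(u_i)\in L^1(\Omega_i)$ with $\lambda_i\ge\Phi(u_i)\lebesgue\ge 0$. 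I would then invoke stability of weak-$*$ convergence under product measures -- for integrands $g\otimes h$ this is the product of two convergent scalars, such sums are dense in $\cont(\Omega_1\times\Omega_2)$ by Stone--Weierstrass, and the total masses stay bounded -- to obtain $\sigma_1^n\otimes\sigma_2^n\weakstarto\lambda_1\otimes\lambda_2$ in $\meas(\Omega_1\times\Omega_2)$. Since $e^{-c/\gamma}\in\cont(\Omega_1\times\Omega_2)$,
\[
    I_1(u_1^n,u_2^n)=\int_{\Omega_1\times\Omega_2}e^{-c/\gamma}\bd(\sigma_1^n\otimes\sigma_2^n)\longrightarrow\int_{\Omega_1\times\Omega_2}e^{-c/\gamma}\bd(\lambda_1\otimes\lambda_2)\ge I_1(u_1,u_2),
\]
the last inequality because $\lambda_1\otimes\lambda_2\ge(\Phi(u_1)\lebesgue)\otimes(\Phi(u_2)\lebesgue)$ -- again by splitting into tensor products of nonnegative measures -- and $e^{-c/\gamma}>0$.

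Adding the three estimates along the final subsequence then gives $\liminf_n B(u_1^n,u_2^n)\ge B(u_1,u_2)$. The main obstacle is precisely the bilinear term $I_1$: it is neither convex nor of polynomial growth, so $\Phi(u_i^n)$ carries no a priori $L^p$-bound and no compactness; the trick is to extract an $L^1$-bound on $\Phi(u_i^n)$ from the energy bound together with the nondegeneracy $\int u_i\bd x_i>0$, pass to measure-valued limits of the two marginals, and use that weak-$*$ convergence is preserved under forming product measures, with $e^{-c/\gamma}>0$ ensuring monotonicity. The bookkeeping required to exclude the degenerate possibilities $u_i\equiv 0$ is a minor but genuinely necessary part of the argument.
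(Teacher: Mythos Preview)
Your argument is correct, but it differs substantially from the paper's for the bilinear term $I_1$. The paper approximates $e^{-c/\gamma}$ from below by simple functions $\sum_{k,l}c_{kl}\1_{\Omega_1^k\times\Omega_2^l}$ on a refining partition; this decouples $I_1$ into a finite sum of products $\bigl(\int_{\Omega_1^k}\Phi(u_1^n)\bigr)\bigl(\int_{\Omega_2^l}\Phi(u_2^n)\bigr)$, each factor weakly l.s.c.\ by convexity of $\Phi$, and one uses $\liminf(a_nb_n)\ge(\liminf a_n)(\liminf b_n)$ for nonnegative sequences together with monotone convergence as the partition refines. This works for \emph{every} weakly-$*$ convergent sequence with no energy bound, no subsequences, and no nondegeneracy case analysis.

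Your route instead extracts an $L^1$ bound on $\Phi(u_i^n)$ from $B\le C$ and the positivity $\int u_i>0$, passes to measure limits $\lambda_i\ge\Phi(u_i)\lebesgue$, and invokes stability of product measures under weak-$*$ convergence (Stone--Weierstrass) to get actual convergence of $I_1$ along a subsequence. The trade-off: you obtain more (a limit, not just a $\liminf$) but pay for it with the bookkeeping to exclude $u_i\equiv0$ and the extra machinery; the paper's discretisation is more elementary and self-contained. Both the entropy-term arguments are essentially the same---the paper simply cites convexity of $-\Psi$, while your decomposition $-\Psi=\rho-(s-1)$ makes the Fatou step explicit.
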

\begin{proof}
    Let $(u_1^n,u_2^n)\weakstarto(u_1,u_2)$ in $\Lexp(\Omega_1)\times \Lexp(\Omega_2)$.
    Then we have in particular $(u_1^n,u_2^n)\rightharpoonup(u_1,u_2)$ in $L^p(\Omega_1)\times L^p(\Omega_2)$ for any $1\leq p<\infty$. Since $-\Psi$ is a lower semi-continuous and convex integrand, it thus follows, e.g., by \cite[Thm.~13.1.1]{Attouch:2006} that
    \begin{equation*}
        \liminf_{n\to\infty} \int_{\Omega_1}-\Psi(u_1^n)\mu\bd x_1 \geq \int_{\Omega_1}-\Psi(u_1)\mu\bd x_1,
        \qquad
        \liminf_{n\to\infty} \int_{\Omega_2}-\Psi(u_2^n)\mu\bd x_1 \geq \int_{\Omega_2}-\Psi(u_2)\mu\bd x_2,
    \end{equation*}
    and hence that these functionals are weak-$*$ sequentially lower semicontinuous on $\Lexp(\Omega_1)\times \Lexp(\Omega_2)$.

    It remains to show weak-$*$ lower semi-continuity of $\int_{\Omega_1\times\Omega_2}\Phi(u_1(x_1))\Phi(u_2(x_2))e^{-\frac{c(x_1,x_2)}\gamma}\bd(x_1,x_2)$.
    For fixed $N>0$, decompose $\Omega_1$ and $\Omega_2$ into a finite number of subsets $\Omega_i^k$ with $\lebesgue(\Omega_i^k)\leq\frac1N$.
    We further assume that the decompositions $(\Omega_1^k)_k$ and $(\Omega_2^k)_k$ for $N+1$ are obtained from the decompositions for $N$ by refinement. 
    Defining $c_{kl}=\min_{(x_1,x_2)\in \Omega_1^k\times \Omega_2^l} e^{-c(x_1,x_2)/\gamma}$, we then have
    \begin{multline*}
        \liminf_{n\to\infty}\int_{\Omega_1\times\Omega_2}\Phi(u_1^n(x_1))\Phi(u_2^n(x_2))e^{-\frac{c(x_1,x_2)}\gamma}\bd(x_1,x_2)\\
        \begin{aligned}
            &\geq\liminf_{n\to\infty}\int_{\Omega_1\times\Omega_2}\Phi(u_1^n(x_1))\Phi(u_2^n(x_2))\sum_{k,l}c_{kl}\1_{\Omega_1^k\times\Omega_2^l}\bd(x_1,x_2)\\
            &=\liminf_{n\to\infty}\sum_{k,l}c_{kl}\int_{\Omega_1\times\Omega_2}\Phi(u_1^n(x_1))\Phi(u_2^n(x_2))\1_{\Omega_1^k}(x_1)\1_{\Omega_2^l}(x_2)\bd(x_1,x_2)\\
            &\geq\sum_{k,l}c_{kl}\liminf_{n\to\infty}\int_{\Omega_1}\Phi(u_1^n(x_1))\1_{\Omega_1^k}(x_1)\bd x_1\,\liminf_{n\to\infty}\int_{\Omega_2}\Phi(u_2^n(x_2))\1_{\Omega_2^l}(x_2)\bd x_2\,.
        \end{aligned}
    \end{multline*}
    Similarly as above, it follows from the lower semi-continuity and convexity of $\Phi$ that $u\mapsto\int_{\Omega_1^k}\Phi(u)\bd x_1$ and $v\mapsto\int_{\Omega_2^l}\Phi(v)\bd x_2$ are sequentially weakly-$*$ lower semi-continuous on $\Lexp(\Omega_1^k)$ and $\Lexp(\Omega_2^l)$, respectively. Hence
    \begin{multline*}
        \sum_{k,l}c_{kl}\liminf_{n\to\infty}\int_{\Omega_1}\Phi(u_1^n(x_1))\1_{\Omega_1^k}(x_1)\bd x_1\,\liminf_{n\to\infty}\int_{\Omega_2}\Phi(u_2^n(x_2))\1_{\Omega_2^l}(x_2)\bd x_2\\
        \begin{aligned}
            &\geq\sum_{k,l}c_{kl}\int_{\Omega_1}\Phi(u_1(x_1))\1_{\Omega_1^k}(x_1)\bd x_1\,\int_{\Omega_2}\Phi(u_2(x_2))\1_{\Omega_2^l}(x_2)\bd x_2\\
            &=\int_{\Omega_1\times\Omega_2}\Phi(u_1(x_1))\Phi(u_2(x_2))\sum_{k,l}c_{kl}\1_{\Omega_1^k\times\Omega_2^l}(x_1,x_2)\bd(x_1,x_2)\\
            &\to\int_{\Omega_1\times\Omega_2}\Phi(u_1(x_1))\Phi(u_2(x_2))e^{-\frac{c(x_1,x_2)}\gamma}\bd(x_1,x_2)\quad\text{as }N\to\infty
        \end{aligned}
    \end{multline*}
    by the monotone convergence theorem, since $\sum_{k,l}c_{kl}\1_{\Omega_1^k\times\Omega_2^l}\nearrow e^{-c/\gamma}$ monotonically.
\end{proof}

\begin{theorem}[dual existence]\label{thm:dualExistence}
    Problem \eqref{eq:entropyRegularizedDual-formal_u1_u2} possesses a maximizer $(\bar u_{1},\bar u_{2})\in \Lexp(\Omega_1)\times \Lexp(\Omega_2)$.
\end{theorem}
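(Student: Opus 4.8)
The plan is to apply Tonelli's direct method to the functional $B$ from \eqref{eq:dual-B}, noting that \eqref{eq:entropyRegularizedDual-formal_u1_u2} amounts to minimizing $B$ over the convex cone $\{u_1\geq0\}\times\{u_2\geq0\}\subset\Lexp(\Omega_1)\times\Lexp(\Omega_2)$ (since $\gamma>0$). First I would record that $B$ is proper: choosing $u_1=\1_{\Omega_1}$ and $u_2=\1_{\Omega_2}$, so that $\Phi(u_i)\equiv1$ and $\Psi(u_i)\equiv0$, gives $B(\1_{\Omega_1},\1_{\Omega_2})=\int_{\Omega_1\times\Omega_2}e^{-c/\gamma}\bd(x_1,x_2)<\infty$, because $c$ is continuous on the compact set $\Omega_1\times\Omega_2$. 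Hence $\inf B<\infty$, and we may fix a minimizing sequence $(u_1^n,u_2^n)$ with $B(u_1^n,u_2^n)<\infty$ for all $n$.

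Next I would normalize the sequence using \cref{lem:invariance}: replacing $(u_1^n,u_2^n)$ by $(\Psi^{-1}(\Psi(u_1^n)-K_n),\Psi^{-1}(\Psi(u_2^n)+K_n))$ with $K_n$ chosen so that $\norm{u_1^n}_{\Phi_{\exp}}=1$ leaves the value $B(u_1^n,u_2^n)$ unchanged, hence keeps the sequence minimizing. I then claim $(u_2^n)$ is bounded in $\Lexp(\Omega_2)$: otherwise a subsequence satisfies $\norm{u_2^n}_{\Phi_{\exp}}\to\infty$, and since the $u_1^n$ are unit vectors, \cref{lem:coercive} forces $B(u_1^n,u_2^n)\to\infty$, contradicting $\inf B<\infty$. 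Thus the (renormalized) minimizing sequence is bounded in $\Lexp(\Omega_1)\times\Lexp(\Omega_2)$.

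Now I would invoke \cref{thm:dual-LlogL}, which identifies $\Lexp(\Omega_i)$ up to equivalent norm with the dual of the separable space $\Llog(\Omega_i)$ (separability by \cref{lem:llogl-separable}), so that bounded sequences in $\Lexp(\Omega_i)$ possess weak-$*$ convergent subsequences. Passing to a subsequence, $(u_1^n,u_2^n)\weakstarto(\bar u_1,\bar u_2)$ in $\Lexp(\Omega_1)\times\Lexp(\Omega_2)$. Feasibility passes to the limit, since for every nonnegative $g\in\Llog(\Omega_i)$ we have $\int_{\Omega_i}\bar u_i\,g\bd x=\lim_n\int_{\Omega_i}u_i^n\,g\bd x\geq0$, so $\bar u_i\geq0$ a.e.\ for $i=1,2$. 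Finally, \cref{lem:wlsc} yields $B(\bar u_1,\bar u_2)\leq\liminf_{n\to\infty}B(u_1^n,u_2^n)=\inf B$; since $(\bar u_1,\bar u_2)$ is feasible, it is a minimizer of $B$, equivalently a maximizer of \eqref{eq:entropyRegularizedDual-formal_u1_u2}. (In particular $\inf B>-\infty$, because the bilinear term is nonnegative and $-\int_{\Omega_i}\Psi(\bar u_i)\,\cdot\,\geq\int_{\Omega_i}(1-\bar u_i)\,\cdot\,>-\infty$ by the $\Lexp$--$\Llog$ pairing.)

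The genuinely nontrivial work — the scaling-invariant coercivity estimate and the weak-$*$ lower semicontinuity of the non-convex product term $\int\Phi(u_1)\Phi(u_2)e^{-c/\gamma}$ — is already contained in \cref{lem:coercive} and \cref{lem:wlsc}. Given those, the only points that require some care are that one must renormalize via \cref{lem:invariance} before extracting a bounded subsequence (a minimizing sequence need not be bounded itself, only its image under the invariance), and that the extraction relies on weak-$*$ sequential compactness coming from separability of the predual $\Llog$, not on reflexivity, which fails by \cref{lem:llogl-separable}.
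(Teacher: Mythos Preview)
Your proof is correct and follows essentially the same route as the paper's: properness via $u_i\equiv1$, normalization by \cref{lem:invariance}, boundedness from \cref{lem:coercive}, weak-$*$ sequential compactness via Banach--Alaoglu and separability of $\Llog$, and conclusion by \cref{lem:wlsc}. You add the explicit verification that the nonnegativity constraint is weak-$*$ closed and that $\inf B>-\infty$, which the paper leaves implicit; otherwise the arguments are identical.
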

\begin{proof}
    We show that $B$ possesses a minimizer.
    The energy $B$ is finite at, e.g., $u_1\equiv 1\equiv u_2$.
    We thus may consider a minimizing sequence $(u_1^n,u_2^n)$ in $\Lexp(\Omega_1)\times\Lexp(\Omega_2)$,
    where by \cref{lem:invariance} we may assume $\int_{\Omega_1}\Phi_{\exp}(u^n_1)\bd x_1 = 1$ without loss of generality.
    \Cref{lem:coercive} now implies boundedness of $\norm{u_2^n}_{\Phi_{\exp}}$
    so that by the Banach--Alaoglu theorem we may extract a weakly-$*$ convergent subsequence from $(u_1^n,u_2^n)$
    (recalling that $\Llog(\Omega_1\times\Omega_2)$ is separable by \cref{lem:llogl-separable}).
    The claim now follows from the lower semi-continuity of $B$ along that subsequence by \cref{lem:wlsc}.
\end{proof}

From dual solutions $\bar u_1$ and $\bar u_2$, we obtain by backsubstitution $\bar \alpha := \gamma\Psi(\bar u_{1})$ and $\bar \beta := \gamma\Psi(\bar u_{2})$ as a candidate for a solution of the original predual problem \eqref{eq:entropyRegularizedDual-formal}. However, these are in general not admissible since $\bar u_{1}\in \Lexp(\Omega_1)$ and $\bar u_{2} \in \Lexp(\Omega_2)$ does not imply the needed regularity of $\bar \alpha\in \cont(\Omega_1)$ and $\bar \beta\in \cont (\Omega_2)$: The positive parts of $\bar \alpha$ and $\bar \beta$ (which equal the positive parts of $\bar u_1+1$ and $\bar u_2+1$, respectively) are in $\Lexp$,
but the negative parts need not even be functions as they could be $-\infty$ everywhere.

Nevertheless, from \eqref{eq:entropyRegularizedDual-formal_u1_u2} one sees that $\bar u_1\geq 0$ $\mu$-almost everywhere and $\bar u_2\geq 0$ $\nu$-almost everywhere, and hence $\bar u_1$ and $\bar u_2$ are at least $\mu$- and $\nu$-measurable, respectively.
We will derive more information on $\bar \alpha$ and $\bar \beta$ from the necessary optimality conditions. 

First, we have again a strong duality result relating \eqref{eq:entropyRegularizedDual-formal_u1_u2} to \eqref{eq:entropyRegularized}.
\begin{proposition}[strong duality]\label{thm:strong-duality}
    Let $\mu\in \Llog(\Omega_1)$, $\nu\in \Llog(\Omega_2)$, and $c\in\cont(\Omega_1\times\Omega_2)$. Then, both \eqref{eq:entropyRegularized} and \eqref{eq:entropyRegularizedDual-formal_u1_u2} admit a solution, and their optimal values coincide.
\end{proposition}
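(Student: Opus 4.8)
The plan is to observe that the two existence statements are already in hand: under the standing assumption $\mu\in\Llog(\Omega_1)$, $\nu\in\Llog(\Omega_2)$, \cref{thm:solutionProperties} provides the unique minimizer $\bar\pi\in\Llog(\Omega_1\times\Omega_2)$ of \eqref{eq:entropyRegularized}, and \cref{thm:dualExistence} provides a maximizer $(\bar u_1,\bar u_2)\in\Lexp(\Omega_1)\times\Lexp(\Omega_2)$ of \eqref{eq:entropyRegularizedDual-formal_u1_u2}. Hence only the equality of optimal values remains, and I would prove it via the chain
\[
    \mathrm{val}\eqref{eq:entropyRegularized}
    = \mathrm{val}\eqref{eq:entropyRegularizedDual-formal}
    \leq \mathrm{val}\eqref{eq:entropyRegularizedDual-formal_u1_u2}
    \leq \mathrm{val}\eqref{eq:entropyRegularized},
\]
whose first equality is the strong duality of \cref{thm:primalExistence}.

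For the middle inequality I would use the substitution $e^{\alpha/\gamma}=\Phi(u_1)$, $e^{\beta/\gamma}=\Phi(u_2)$ introduced above: given $(\alpha,\beta)$ feasible for \eqref{eq:entropyRegularizedDual-formal}, set $u_1=\Phi^{-1}(e^{\alpha/\gamma})$ and $u_2=\Phi^{-1}(e^{\beta/\gamma})$. Since $\Phi^{-1}$ is continuous on $(0,\infty)$ and $\alpha,\beta$ are continuous, hence bounded, on the compact sets $\Omega_i$, the functions $u_1,u_2$ are continuous, bounded and positive, so $u_i\in L^\infty(\Omega_i)\embed\Lexp(\Omega_i)$ and $(u_1,u_2)$ is feasible for \eqref{eq:entropyRegularizedDual-formal_u1_u2}; moreover the computation preceding \eqref{eq:entropyRegularizedDual-formal_u1_u2} shows that the two objective values coincide. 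Taking the supremum over the (larger) feasible set of \eqref{eq:entropyRegularizedDual-formal_u1_u2} gives $\mathrm{val}\eqref{eq:entropyRegularizedDual-formal}\leq\mathrm{val}\eqref{eq:entropyRegularizedDual-formal_u1_u2}$.

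The last inequality is weak duality between \eqref{eq:entropyRegularizedDual-formal_u1_u2} and \eqref{eq:entropyRegularized}, which carries the actual argument. Fix a feasible $\pi$ for \eqref{eq:entropyRegularized} and a feasible $(u_1,u_2)$ for \eqref{eq:entropyRegularizedDual-formal_u1_u2}; the objective of \eqref{eq:entropyRegularizedDual-formal_u1_u2} is always $<+\infty$ (its transport term is nonpositive and $\Psi(u_i)\leq u_i-1$ pairs integrably with $\mu,\nu\in\Llog$ by the $\Lexp$--$\Llog$ duality of \cref{thm:dual-LlogL}), so we may assume it is finite, whence $\Psi(u_1)$ and $\Psi(u_2)$ are $\mu$- resp.\ $\nu$-integrable. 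Writing $\alpha=\gamma\Psi(u_1)$, $\beta=\gamma\Psi(u_2)$ (allowing the value $-\infty$) and invoking the pointwise conjugation identity
\[
    -\gamma\exp\!\Big(\tfrac{-c+\alpha+\beta}{\gamma}\Big)
    = \min_{p\geq 0}\big[(c-\alpha-\beta)\,p+\gamma\,p(\log p-1)\big]
\]
from the proof of \cref{thm:primalExistence} (valid pointwise with $0\log 0=0$, also where $\alpha$ or $\beta$ equals $-\infty$), the left-hand side equals $-\gamma\Phi(u_1(x_1))\Phi(u_2(x_2))e^{-c(x_1,x_2)/\gamma}$ because $\alpha+\beta=\gamma\log\!\big(\Phi(u_1)\Phi(u_2)\big)$. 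Bounding the minimum on the right by its value at $p=\pi(x_1,x_2)$, integrating over $\Omega_1\times\Omega_2$, and rewriting $\int\Psi(u_i)\,\pi=\int\Psi(u_i)\,\pushforward{(P_i)}\pi$ via the marginal constraints turns the inequality precisely into ``objective of \eqref{eq:entropyRegularizedDual-formal_u1_u2} at $(u_1,u_2)$ $\leq$ objective of \eqref{eq:entropyRegularized} at $\pi$''; taking the supremum over $(u_1,u_2)$ and the infimum over $\pi$ yields $\mathrm{val}\eqref{eq:entropyRegularizedDual-formal_u1_u2}\leq\mathrm{val}\eqref{eq:entropyRegularized}$. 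Since the three relations then force equality throughout, in particular the values at $\bar\pi$ and $(\bar u_1,\bar u_2)$ agree. I expect the only delicate point to be the measure-theoretic bookkeeping in this last step — the Fubini-type rewriting of the $\Psi$-terms and the check that no $\infty-\infty$ arises — which is handled by splitting into positive and negative parts and noting that $\{u_i=0\}$ is $\mu$- resp.\ $\nu$-null (so $\pi$ vanishes there) and that the convention $0\log 0=0$ keeps both sides consistent.
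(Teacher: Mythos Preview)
Your argument is correct, and the existence part as well as the inequality $\mathrm{val}\eqref{eq:entropyRegularizedDual-formal}\leq\mathrm{val}\eqref{eq:entropyRegularizedDual-formal_u1_u2}$ coincide with the paper's proof. The difference lies in how the remaining inequality is closed. The paper does \emph{not} establish weak duality between \eqref{eq:entropyRegularizedDual-formal_u1_u2} and \eqref{eq:entropyRegularized} directly; instead it shows $\mathrm{val}\eqref{eq:entropyRegularizedDual-formal_u1_u2}\leq\mathrm{val}\eqref{eq:entropyRegularizedDual-formal}$ by approximation: first truncate the optimal $(\bar u_1,\bar u_2)$ from below by $\tfrac1n$ and from above by $N$ (monotone convergence), so that $\Psi(\bar u_i)$ becomes bounded, and then approximate the bounded $\gamma\Psi(u_i)$ in $L^2$ by continuous $(\alpha_n,\beta_n)$, checking convergence of each term separately. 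Your route via the pointwise Fenchel--Young inequality and the marginal constraints is more direct and avoids the density argument entirely; the integrability issues you flag (that $\Psi(u_i)$ is $\mu$- resp.\ $\nu$-integrable whenever the dual objective is finite, hence $\{u_i=0\}$ is a null set for the marginal and therefore $\pi$ vanishes there) are exactly the points that make it work. What the paper's approach buys in return is the slightly stronger statement that the continuous predual \eqref{eq:entropyRegularizedDual-formal} already attains the value of \eqref{eq:entropyRegularizedDual-formal_u1_u2} along a maximizing sequence, i.e.\ the reformulation does not enlarge the optimal value even before passing through \eqref{eq:entropyRegularized}.
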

\begin{proof}
    Existence for both problems follows from \cref{thm:solutionProperties,thm:dualExistence}.
    To show their equality, by \cref{thm:primalExistence} it suffices to show that the value of \eqref{eq:entropyRegularizedDual-formal} equals that of \eqref{eq:entropyRegularizedDual-formal_u1_u2}.
    First, let $\alpha\in\cont(\Omega_1)$ and $\beta \in\cont(\Omega_2)$ be arbitrary and set $u_1:=\Psi^{-1}(\alpha/\gamma)$ and $u_2:=\Psi^{-1}(\beta/\gamma)$. By substitution, we see that
    \begin{multline*}
        \int_{\Omega_1}\alpha\mu\bd(x_1) + \int_{\Omega_2} \beta\nu\bd(x_2) - \gamma \int_{\Omega_1\times \Omega_2} \exp\left(\frac{-c(x_1,x_2)+\alpha(x_2)+\beta(x_1)}{\gamma}\right) \bd(x_1,x_2)\\
        \leq  \max_{u_1,u_2\geq 0}  -\gamma B(u_1,u_2),
    \end{multline*}
    and taking the supremum over all $\alpha,\beta$ yields that the value of \eqref{eq:entropyRegularizedDual-formal} is at most that of \eqref{eq:entropyRegularizedDual-formal_u1_u2}.

    It thus remains to show that the value of \eqref{eq:entropyRegularizedDual-formal_u1_u2} can be achieved by \eqref{eq:entropyRegularizedDual-formal}.
    Let $\bar u_1,\bar u_2$ be optimal. By the monotone convergence theorem,
    $B(\bar u_1,\bar u_2)=\lim_{n\to\infty}B(\max\{\bar u_1,\frac1n\},\max\{\bar u_1,\frac1n\})$
    and also 
    \begin{equation*}
        B\left(\max\left\{\bar u_1,\tfrac1n\right\},\max\left\{\bar u_1,\tfrac1n\right\}\right)=\lim_{N\to\infty}B\left(\min\left\{\max\left\{\bar u_1,\tfrac1n\right\},N\right\},\min\left\{\max\left\{\bar u_1,\tfrac1n\right\},N\right\}\right).
    \end{equation*}
    Hence $B(\bar u_1,\bar u_2)$ can be arbitrarily well approximated by $B(u_1,u_2)$ with $\Psi(u_1)\in L^\infty(\Omega_1)$ and $\Psi(u_2)\in L^\infty(\Omega_2)$.
    Now let $\alpha_n\in\cont(\Omega_1)$ and $\beta_n\in\cont(\Omega_2)$ with $\alpha_n\to\gamma\Psi(u_1)$ in $L^2(\Omega_1)$ and $\beta_n\to\gamma\Psi(u_2)$ in $L^2(\Omega_2)$.
    Here we may assume $\alpha_n,\beta_n$ to be uniformly bounded
    so that (upon restricting to a subsequence) we additionally have $\alpha_n\weakstarto\gamma\Psi(u_1)$ and $\beta_n\weakstarto\gamma\Psi(u_2)$ in $L^\infty(\Omega)$.
    Now 
    \begin{equation*}
        \int_{\Omega_1}\alpha_n\mu\bd x_1+\int_{\Omega_2}\beta_n\nu\bd x_2\to \gamma\left[\int_{\Omega_1}\Psi(u_1)\mu\bd x_1+\int_{\Omega_2}\Psi(u_2)\nu\bd x_2\right]
    \end{equation*}
    due to the weak-$*$ convergence.
    Finally, as $\alpha_n, \beta_n$ converge in $L^2$, $e^{\frac{\alpha_n(x_1)+\beta_n(x_2)}\gamma}$ converges a.e. (after passing to a subsequence). Using uniform boundedness of $\alpha_n, \beta_n$, the dominated convergence theorem yields
    \begin{equation*}
        \int_{\Omega_1\times\Omega_2}\exp\left(\frac{\alpha_n(x_1)+\beta_n(x_2)-c(x_1,x_2)}\gamma\right)\bd(x_1,x_2)\to\int_{\Omega_1\times\Omega_2}\Phi(u_1(x_1))\Phi(u_2(x_2))e^{-\frac{c(x_1,x_2)}\gamma}\bd(x_1,x_2)\,.
        \qedhere
    \end{equation*}
\end{proof}

Having established primal and dual existence, we can now show how the solution of the dual problem can be used to solve the primal problem.
\begin{theorem}[optimality conditions]
    Let $\mu\in \Llog(\Omega_1)$, $\nu\in \Llog(\Omega_2)$, and $c\in\cont(\Omega_1\times\Omega_2)$.    
    Then  solutions   $(\bar u_1,\bar u_2)\in \Lexp(\Omega_1)\times\Lexp(\Omega_2)$ of \eqref{eq:entropyRegularizedDual-formal_u1_u2} satisfy
    \begin{subequations}
        \label{eq:optCond}
        \begin{align}
            \int_{\Omega_2}\Phi(\bar u_2(x_2))e^{-\frac{c(x_1,x_2)}\gamma}\bd x_2\,\Phi(\bar u_1(x_1)) & = \mu(x_{1})\,,\label{eq:optCond1}\\
            \int_{\Omega_1}\Phi(\bar u_1(x_1))e^{-\frac{c(x_1,x_2)}\gamma}\bd x_1\,\Phi(\bar u_2(x_2)) & = \nu(x_{2})\,,\label{eq:optCond2}
        \end{align}
    \end{subequations}
    for $\mu$-almost every $x_1\in\Omega_1$ and $\nu$-almost every $x_2\in \Omega_2$. Furthermore, $\bar\pi$ defined by
    \begin{equation}
        \bar \pi(x_1,x_2)=\Phi(\bar u_1(x_1))\Phi(\bar u_2(x_2))e^{-\frac{c(x_1,x_2)}\gamma}\label{eq:optimumPi}
    \end{equation}
    is the solution of \eqref{eq:entropyRegularized}.
\end{theorem}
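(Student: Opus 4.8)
The plan is to obtain the optimality conditions \eqref{eq:optCond} as first-order necessary conditions for the maximization of \eqref{eq:entropyRegularizedDual-formal_u1_u2} and then to verify that $\bar\pi$ defined by \eqref{eq:optimumPi} is primal feasible with the optimal value, using strong duality from \cref{thm:strong-duality}. Since $\Phi$ and $\Psi$ are only one-sidedly differentiable at $s=1$ (and $\Psi$ is not Gâteaux-differentiable in general, cf.\ \cref{ex:LlogL-log-not-Lexp}), I would not differentiate the functional directly in $\Lexp$ but instead argue pointwise. Concretely, fix the optimal $\bar u_2$ and regard $B(\cdot,\bar u_2)$ as a function of $u_1$; since $B$ is a sum of an integral linear in $\Phi(u_1)$ and a concave term $-\int\Psi(u_1)\mu\bd x_1$, the map $s\mapsto \Phi(s)\,a(x_1) - \Psi(s)\,\mu(x_1)$ with $a(x_1):=\int_{\Omega_2}\Phi(\bar u_2(x_2))e^{-c(x_1,x_2)/\gamma}\bd x_2>0$ is, for fixed $x_1$, convex in $s\ge 0$, so minimizing $B(\cdot,\bar u_2)$ amounts to minimizing this integrand pointwise for $\mu$-a.e.\ $x_1$. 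Computing the pointwise minimum over $s\ge 0$: on $(0,1)$ the derivative is $a - \mu/s$, on $(1,\infty)$ it is $e^{s-1}a - \mu$, and matching the stationarity condition gives exactly $\Phi(\bar u_1(x_1))\,a(x_1)=\mu(x_1)$, i.e.\ \eqref{eq:optCond1}; the symmetric argument with the roles of the marginals exchanged yields \eqref{eq:optCond2}. One must also check the boundary case: if the pointwise minimizer is $s=0$ this forces $\mu(x_1)=0$ there, and since $\bar u_1\ge 0$ $\mu$-a.e.\ (as noted after \cref{thm:dualExistence}), \eqref{eq:optCond1} still holds $\mu$-a.e.\ in the convention $0\log 0=0$, $\Phi(0)=0$.

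Next I would define $\bar\pi$ by \eqref{eq:optimumPi} and check that it is feasible for \eqref{eq:entropyRegularized}. Nonnegativity and measurability are clear. Integrating over $x_2$ and using \eqref{eq:optCond1} gives $\pushforward{(P_1)}\bar\pi = \mu$ $\mu$-a.e., and integrating over $x_1$ with \eqref{eq:optCond2} gives $\pushforward{(P_2)}\bar\pi=\nu$ $\nu$-a.e.; since both sides are in $L^1$ and $\mu,\nu$ are probability densities, these identities hold Lebesgue-a.e., so in particular $\int\bar\pi\bd(x_1,x_2)=1$ and $\bar\pi\in\prob(\Omega_1\times\Omega_2)$. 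By \cref{thm:solutionProperties} the primal problem has a unique minimizer in $\Llog$, and $\bar\pi\in\Llog(\Omega_1\times\Omega_2)$ follows from $\mu,\nu\in\Llog$ together with \cref{thm:sum} and the boundedness of $e^{-c/\gamma}$ (alternatively from finite entropy directly). It then remains to identify the primal objective at $\bar\pi$ with the dual optimal value. Using $\log\bar\pi = \Psi(\bar u_1)+\Psi(\bar u_2) - c/\gamma$ on $\{\bar\pi>0\}$ (since $\log\Phi=\Psi$), a short computation gives
\begin{equation*}
    \int c\,\bar\pi + \gamma\int\bar\pi(\log\bar\pi-1)
    = \gamma\int\Psi(\bar u_1)\,\pushforward{(P_1)}\bar\pi\bd x_1 + \gamma\int\Psi(\bar u_2)\,\pushforward{(P_2)}\bar\pi\bd x_2 - \gamma\int\bar\pi,
\end{equation*}
and substituting the marginal identities $\pushforward{(P_1)}\bar\pi=\mu$, $\pushforward{(P_2)}\bar\pi=\nu$ and $\int\bar\pi = \int\Phi(\bar u_1)\Phi(\bar u_2)e^{-c/\gamma}$ turns the right-hand side into precisely $-\gamma B(\bar u_1,\bar u_2)$, the optimal value of \eqref{eq:entropyRegularizedDual-formal_u1_u2}. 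By \cref{thm:strong-duality} this equals the optimal value of \eqref{eq:entropyRegularized}, so the feasible $\bar\pi$ is optimal, and by uniqueness in \cref{thm:solutionProperties} it is \emph{the} solution.

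The main obstacle I anticipate is the justification of the pointwise optimization step and of the attendant differentiations, because $\bar u_1$, $\bar u_2$ lie only in $\Lexp$ and $\Psi(\bar u_i)$ may fail to be integrable or even real-valued on the complement of $\supp\mu$, $\supp\nu$ respectively — so the functional $B$ cannot simply be differentiated. The clean way around this is to avoid taking derivatives of $B$ altogether: since $B(\cdot,\bar u_2)$ is an integral of a pointwise-convex integrand, its minimum over $u_1\in\Lexp$, $u_1\ge0$, is attained by minimizing the integrand for $\mu$-a.e.\ and Lebesgue-a.e.\ $x_1$ (one should check that the resulting pointwise minimizer indeed lies in $\Lexp$, which follows because $\Phi(\bar u_1)=\mu/a\in L^1$ and $a$ is bounded below by a positive constant once one knows $\bar u_2\not\equiv0$, using continuity and positivity of $e^{-c/\gamma}$). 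Care is also needed where $a(x_1)=0$, i.e.\ where $\bar u_2=0$ $\nu$-a.e.; by \cref{thm:solutionSupport} and the marginal constraints this only happens off $\supp\mu$, so it does not affect the $\mu$-a.e.\ statement. Apart from these measure-theoretic bookkeeping points, the remaining computations are routine.
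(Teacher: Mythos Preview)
Your proposal is correct and reaches the same conclusions, but the route to the optimality conditions \eqref{eq:optCond} differs from the paper's. The paper argues by taking \emph{directional derivatives} of $B$: it fixes $\varepsilon>0$, restricts to test directions $\varphi\in L^\infty(\Omega_1)$ vanishing on $\{\bar u_1<\varepsilon\}$ (so that $\bar u_1\pm t\varphi\ge0$ for small $t$), and justifies differentiation under the integral via dominated convergence, obtaining the Euler--Lagrange equation on $\{\bar u_1\ge\varepsilon\}$ and then letting $\varepsilon\to0$. You instead exploit that, after applying Tonelli, $B(\cdot,\bar u_2)$ is an integral of the \emph{pointwise convex} integrand $s\mapsto a(x_1)\Phi(s)-\mu(x_1)\Psi(s)$, whose unique minimizer $v_1(x_1)=\Phi^{-1}(\mu(x_1)/a(x_1))$ lies in $\Lexp(\Omega_1)$; comparing $B(\bar u_1,\bar u_2)$ with $B(v_1,\bar u_2)$ then forces $\bar u_1=v_1$ almost everywhere without any differentiation. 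Your approach is more direct and sidesteps the delicate differentiability issues (the restriction to $\{\bar u_1\ge\varepsilon\}$, the dominated convergence bounds) that the paper handles explicitly; the paper's approach in turn is closer to a standard first-variation argument and makes the connection to \eqref{eq:entropyRegularizedDual-formal} more transparent. The second half of your proof---verifying feasibility of $\bar\pi$ and equating the primal value at $\bar\pi$ with $-\gamma B(\bar u_1,\bar u_2)$ via $\log\bar\pi=\Psi(\bar u_1)+\Psi(\bar u_2)-c/\gamma$ and strong duality---matches the paper's argument essentially verbatim. Two small remarks: your justification that the marginal identity extends from $\mu$-a.e.\ to Lebesgue-a.e.\ (``since both sides are in $L^1$'') is not quite the right reason; it follows because on $\{\mu=0\}$ the pointwise integrand $a(x_1)\Phi(s)$ is uniquely minimized at $s=0$, forcing $\bar u_1=0$ there. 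Also, $a(x_1)>0$ everywhere (not just off $\supp\mu$), since $a(x_1)=0$ would require $\bar u_2=0$ Lebesgue-a.e., which is ruled out by finiteness of $B(\bar u_1,\bar u_2)$.
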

\begin{proof}
    Let $\bar u_1,\bar u_2$ be solutions of the dual problem.
    We start with deriving the necessary conditions \eqref{eq:optCond}.
    First, note that $\{\bar u_1>0\}\supset\{\mu>0\}$ and $\{\bar u_2>0\}\supset\{\nu>0\}$ (up to a Lebesgue-negligible set)
    since otherwise $\int_{\Omega_1}\Psi(\bar u_1)\mu\bd x_1+\int_{\Omega_2}\Psi(\bar u_2)\nu\bd x_2=-\infty$. 
    Let now $\varepsilon>0$ be arbitrary and consider any $\varphi\in \Lexp(\Omega_1)\cap L^\infty(\Omega_1)$ with $\varphi=0$ on $\{\bar u_1<\varepsilon\}$. We next argue that the dual functional $B$ given in \eqref{eq:dual-B} is directionally differentiable in $(\bar u_1,\bar u_2)$ with respect to its first argument in direction $\varphi$. 
    Since both $s\mapsto\Phi(s)$ and $s\mapsto\Psi(s)$ are differentiable at $s>0$, so are the integrands pointwise almost everywhere on $\{\bar u_1 \geq \varepsilon\}\times\Omega_2$. It therefore suffices to show that the pointwise directional derivatives are integrable in order to differentiate under the integral.
    For the first term in $B$, we have almost everywhere on $\{\bar u_1 \geq \varepsilon\}\times\Omega_2$ that
    \begin{align*}
        \Phi'(\bar u_1;\varphi)\Phi(\bar u_2)e^{-\frac c\gamma} &=
        \begin{cases}
            \Phi(\bar u_1)\varphi \Phi(\bar u_2) e^{-\frac c\gamma}&\bar u_1>1\,,\\
            \varphi\Phi(\bar u_2)e^{-\frac c\gamma}&\text{else,}
        \end{cases}\\
        &\leq\frac1\varepsilon\|\varphi \|_\infty \Phi(\bar u_1)\Phi(\bar u_2) e^{-\frac c\gamma}\,,
    \end{align*}
    which is integrable on $\{\bar u_1 \geq \varepsilon\}\times \Omega_2$ since $\bar u_1$ and $\bar u_2$ are feasible for \eqref{eq:entropyRegularizedDual-formal_u1_u2}. An integrable lower bound is obtained similarly using $\varphi\in L^\infty(\Omega_1)$.

    For the second term in $B$, the chain rule and differentiability of $\Phi$ yields almost everywhere on $\{\bar u_1 \geq \varepsilon\}\times\Omega_2$ that
    \begin{equation*}
        \Psi'(\bar u_1 ;\varphi) = \frac{1}{\Phi(\bar u_1)}\Phi'(\bar u_1)\varphi = (\bar u_1^{-1}\1_{\{\bar u_1<1\}} + \1_{\{\bar u_1\geq1\}}) \varphi\leq \|\varphi\|_\infty(\varepsilon^{-1}\1_{\{\bar u_1<1\}} + \1_{\{\bar u_1\geq1\}})\,,
    \end{equation*}
    where the right-hand side is integrable with respect to $\mathrm{d}\mu$.

    From the dominated convergence theorem, it thus follows that the partial directional derivative of $B$ in the first direction is given by
    \begin{equation*}
        \begin{aligned}
            B_1'(\bar u_1,\bar u_2;\varphi)
            &=-\int_{\{\bar u_1\geq\varepsilon\}}\Psi'(\bar u_1;\varphi)\mu\bd x_1\\
            \MoveEqLeft[-1]+\int_{\{\bar u_1\geq\varepsilon\}\times\Omega_2}\Phi'(\bar u_1(x_1);\varphi(x_1))\Phi(\bar u_2(x_2))e^{-\frac{c(x_1,x_2)}\gamma}\bd(x_1,x_2)\\
            &=-\int_{\{\bar u_1\geq\varepsilon\}}\max\{\Phi(\bar u_1(x_1)),1\} \varphi(x_1)\left[\tfrac{\mu(x_1)}{\Phi(\bar u_1(x_1))}-\int_{\Omega_2}\Phi(\bar u_2(x_2))e^{-\frac{c(x_1,x_2)}\gamma}\bd x_2\right]\bd x_1\,,
        \end{aligned}
    \end{equation*}
    where we have again used the integrability of the integrand to apply Fubini's Theorem in order to iterate the double integrals and used $\Phi'(s) = \max\{\Phi(s),1\}$ and $\Psi'(s) = \Phi(s)^{-1} \Phi'(s)$.

    By the specific choice of $\varphi$, we have $\bar u_1 \pm t\varphi \geq 0$ for all $t>0$ sufficiently small. The optimality of $(\bar u_1,\bar u_2)$ thus implies that
    \begin{equation*}
        0 = B_1'(\bar u_1,\bar u_2; \varphi),
    \end{equation*}
    and since $\varphi$ was arbitrary on $\{\bar u_1\geq \varepsilon\}$ and $\max\{\Phi(\bar u_1),1\}>0$, we must therefore have that
    \begin{equation*}
        0=\mu(x_1)-\Phi(\bar u_1(x_1))\int_{\Omega_2}\Phi(\bar u_2(x_2))e^{-\frac{c(x_1,x_2)}\gamma}\bd x_2
        \qquad\text{for $\mu$-almost all } x_1\in \Omega_1\text{ with }\bar u_1(x_1)\geq\varepsilon\,.
    \end{equation*}
    Furthermore, since $\varepsilon>0$ was arbitrary and $\mu(x_1)=0$ whenever $\bar u_1(x_1)=0$, this equation even holds for $\mu$-almost all $x_1\in\Omega_1$,
    which yields \eqref{eq:optCond1}.
    Equation \eqref{eq:optCond2} is derived analogously.

    Now we show that $\bar \pi$ defined by \eqref{eq:optimumPi} is a solution of the primal problem.
    First note that by construction, $\bar\pi$ is feasible (i.e., is non-negative and has the correct marginals).
    Since strong duality holds by \cref{thm:strong-duality}, it thus suffices to show that the primal objective functional evaluated in $\bar \pi$ is equal to the dual optimal objective value \eqref{eq:entropyRegularizedDual-formal_u1_u2}.
    To that end, we insert \eqref{eq:optimumPi} into the objective functional in \eqref{eq:entropyRegularized} and obtain (using again the convention that $0\log 0 =0$)
    \begin{equation*}
        \begin{aligned}
            \int_{\Omega_1\times\Omega_2} c\bar\pi + \gamma\bar\pi(\log\bar\pi -1)\bd(x_1,x_2)
            &= \int_{\Omega_1\times\Omega_2} c \Phi(\bar u_1)\Phi(\bar u_2) e^{-\frac c\gamma}\bd(x_1,x_2)\\
            \MoveEqLeft[-3]+ \gamma\Phi(\bar u_1)\Phi(\bar u_2)e^{-\frac c\gamma}\left(\Psi(\bar u_1) + \Psi(\bar u_2) - \frac c\gamma -1\right)\bd(x_1,x_2)\\
            &= -\ \gamma\int_{\Omega_1\times\Omega_2} \Phi(\bar u_1)\Phi(\bar u_2)e^{-\frac c\gamma}\bd(x_1,x_2)\\
            \MoveEqLeft[-1]+ \gamma \int_{\Omega_1\times\Omega_2}\Psi(\bar u_1)\Phi(\bar u_1)\Phi(\bar u_2)e^{-\frac c\gamma}\bd(x_1,x_2)\\
            \MoveEqLeft[-1]+ \gamma\int_{\Omega_{1}\times\Omega_{2}} \Psi(\bar u_2)\Phi(\bar u_1)\Phi(\bar u_2)e^{-\frac c\gamma}\bd(x_1,x_2)\,.
        \end{aligned}
    \end{equation*}
    Since $\bar u_i \geq 0$ and hence $\Phi(\bar u_i)\geq 0$, we have that $\Psi(\bar u_i)\Phi(\bar u_i)= \log(\Phi(\bar u_i)) \Phi(\bar u_i) \geq -\frac1e$. Furthermore, we have assumed $\lebesgue(\Omega_i)<\infty$ and can thus shift the integrand to allow applying Tonelli's Theorem in the second and third integral. Inserting \eqref{eq:optCond}, the right-hand side now coincides with $B(\bar u_1,\bar u_2)$. Hence strong duality holds for $(\bar u_1,\bar u_2)$ and $\bar\pi$, and thus the latter is a solution to \eqref{eq:entropyRegularized}.
\end{proof}

\begin{remark}\label{rem:sinkhorn}
    The optimality system \eqref{eq:optCond} can be used to derive the Sinkhorn algorithm. First, note that one only needs to find $\bar u_{1}$ and $\bar u_{2}$ that solve \eqref{eq:optCond1} and \eqref{eq:optCond2}; an optimal plan $\bar \pi$ is then obtained from \eqref{eq:optimumPi}.
    The Sinkhorn method now solves the nonlinear system \eqref{eq:optCond} by alternatingly solving the equations: Given $u_{2}^{n}$, compute $u_{1}^{n+1}$ by solving \eqref{eq:optCond1}, i.e., setting
    \begin{equation*}
        u_{1}^{n+1}(x_{1}) = \Phi^{-1}\left(\frac{\mu(x_{1})}{\int_{\Omega_{2}}\Phi(u_{2}^{n}(x_{2}))\exp\left(-\tfrac{c(x_{1},x_{2})}{\gamma}\right)\bd x_{2}}\right),
    \end{equation*}
    and then solve \eqref{eq:optCond2} with $u_{2}^{n+1}$ to obtain
    \begin{equation*}
        u_{2}^{n+1}(x_{2}) = \Phi^{-1}\left(\frac{\nu(x_{2})}{\int_{\Omega_{1}}\Phi(u_{1}^{n+1}(x_{1}))\exp\left(-\tfrac{c(x_{1},x_{2})}{\gamma}\right)\bd x_{1}}\right).
    \end{equation*}
    Formulating this iteration directly in $\Phi(u_{1})$ and $\Phi(u_{2})$, we obtain the original Sinkhorn algorithm, cf.~\cite[Sec.~5.3.1]{Essid:2019}.
\end{remark}

\begin{remark}
    The optimality system \eqref{eq:optCond} also corresponds to the so-called Schrödinger system \cite[Eq. (4.12)--(4.13) or (4.14)]{Chen:2020}, i.e.,
    the system of equations which characterizes the solution to the so-called Schrödinger bridge problem
    (essentially, the most likely transition path of a hot gas between the initial and final gas distribution $\mu$ and $\nu$).
    Existence of solutions to that system was typically shown based on iterative approximation schemes
    (analogous to but predating the Sinkhorn algorithm; see the discussion in \cite{Chen:2020}).
    There are also alternative proofs  exploiting the variational nature of the problem; however, these are not as straightforward as identifying \eqref{eq:optCond} as the optimality conditions to an optimization problem which has a solution.
    In \cite{Beurling:1960}, for example, a minimizing sequence for the dual problem \eqref{eq:entropyRegularizedDual-formal-intro} is used to construct
    a sequence of measures of the type \eqref{eq:optimumPi} that is then shown to converge to a solution of the Schrödinger bridge problem.
\end{remark}

Finally, the optimality conditions \eqref{eq:optCond1} and \eqref{eq:optCond2} allow us to conclude which problem is solved by $(\bar \alpha,\bar \beta)$.
\begin{corollary}\label{cor:predual_lexp}
    Let $\mu\in \Llog(\Omega_1)$, $\nu\in \Llog(\Omega_2)$, and $c\in\cont(\Omega_1\times\Omega_2)$.    
    Let $(\bar u_1,\bar u_2)\in \Lexp(\Omega_1)\times\Lexp(\Omega_2)$ be a solution of \eqref{eq:entropyRegularizedDual-formal_u1_u2}. Then $\bar \alpha := \gamma\Psi(\bar u_{1})\in L^1(\Omega_1,\mu)$ and $\bar \beta := \gamma\Psi(\bar u_{2})\in L^1(\Omega_2,\nu)$ are solutions of
    \begin{equation}\label{eq:entropyRegularizedDual-l1}
        \sup_{\substack{\alpha\in L^1(\Omega_{1},\mu)\\\beta\in L^1(\Omega_{2},\nu)}}\int_{\Omega_{1}}\alpha\bd\mu+\int_{\Omega_{2}}\beta\bd\nu-\gamma\int_{\Omega_1\times\Omega_2}\exp\left(\tfrac{-c(x_1,x_2)+\alpha(x_2)+\beta(x_1)}\gamma\right)\bd(x_1,x_2),
        \tag{D$_{L^1}$}
    \end{equation}
    and the values of \eqref{eq:entropyRegularizedDual-l1} and \eqref{eq:entropyRegularizedDual-formal_u1_u2} coincide.
\end{corollary}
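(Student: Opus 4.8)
The plan is to squeeze the optimal value of \eqref{eq:entropyRegularizedDual-l1} between the optimal value of \eqref{eq:entropyRegularized} (from above, by weak duality) and the objective of \eqref{eq:entropyRegularizedDual-l1} evaluated at $(\bar\alpha,\bar\beta)$ (from below, once its feasibility is established), and to check that the two coincide; throughout write $J_{L^1}(\alpha,\beta)$ for the objective functional of \eqref{eq:entropyRegularizedDual-l1}. The first step is feasibility, i.e.\ $\bar\alpha\in L^1(\Omega_1,\mu)$ and $\bar\beta\in L^1(\Omega_2,\nu)$. For the upper bound, the generalized Hölder inequality of \cref{thm:dual-LlogL} (using $\bar u_1\in\Lexp(\Omega_1)$, $\mu\in\Llog(\Omega_1)$) gives $\int_{\Omega_1}\bar u_1\mu\bd x_1\le c_\Phi\norm{\bar u_1}_{\Phi_{\exp}}\norm{\mu}_{\Phi_{\log}}<\infty$, and $\Psi(s)\le s-1$ then yields $\int_{\Omega_1}\Psi(\bar u_1)\mu\bd x_1<\infty$ (and likewise for $\bar u_2$). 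For the lower bound I would use that $(\bar u_1,\bar u_2)$ minimizes $B$ while $B(1,1)=\int_{\Omega_1\times\Omega_2}e^{-c/\gamma}\bd(x_1,x_2)<\infty$, so $B(\bar u_1,\bar u_2)<\infty$; writing $B(\bar u_1,\bar u_2)$ as the sum of the nonnegative term $\int_{\Omega_1\times\Omega_2}\Phi(\bar u_1)\Phi(\bar u_2)e^{-c/\gamma}\bd(x_1,x_2)$ and the two terms $-\int_{\Omega_i}\Psi(\bar u_i)(\cdot)$, each of which is bounded below by the finite quantity $-\int(\bar u_i-1)(\cdot)$, finiteness of $B(\bar u_1,\bar u_2)$ forces every summand to be finite, hence $\int_{\Omega_1}\Psi(\bar u_1)\mu\bd x_1>-\infty$. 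Thus $\bar\alpha=\gamma\Psi(\bar u_1)\in L^1(\Omega_1,\mu)$ and $\bar\beta=\gamma\Psi(\bar u_2)\in L^1(\Omega_2,\nu)$, noting that $\bar\alpha$ equals $-\infty$ only on the $\mu$-negligible set $\{\bar u_1=0\}$ and that in the Lebesgue integral in \eqref{eq:entropyRegularizedDual-l1} only $e^{\bar\alpha/\gamma}=\Phi(\bar u_1)\ge0$ enters.

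Next I would establish weak duality between \eqref{eq:entropyRegularizedDual-l1} and \eqref{eq:entropyRegularized}. For arbitrary $\alpha\in L^1(\Omega_1,\mu)$, $\beta\in L^1(\Omega_2,\nu)$ and any feasible $\pi$ for \eqref{eq:entropyRegularized} (with finite entropy, the other case being trivial since the primal objective is then $+\infty$), the pushforward formula gives $\int\alpha\bd\mu=\int_{\Omega_1\times\Omega_2}\alpha(x_1)\pi\bd(x_1,x_2)$ and similarly for $\beta$, whence
\begin{equation*}
  \Big[\textstyle\int c\bd\pi+\gamma\int\pi(\log\pi-1)\bd x\Big]-J_{L^1}(\alpha,\beta)
  =\int_{\Omega_1\times\Omega_2}\big(a\pi+\gamma\pi(\log\pi-1)+\gamma e^{-a/\gamma}\big)\bd(x_1,x_2),\qquad a:=c-\alpha\oplus\beta.
\end{equation*}
A one-variable computation shows that $p\mapsto ap+\gamma p(\log p-1)+\gamma e^{-a/\gamma}$ is convex on $[0,\infty)$, minimized at $p=e^{-a/\gamma}$ with minimal value $0$; hence the integrand is nonnegative, so $J_{L^1}(\alpha,\beta)\le\int c\bd\pi+\gamma\int\pi(\log\pi-1)\bd x$, and taking the infimum over $\pi$ shows the supremum in \eqref{eq:entropyRegularizedDual-l1} is at most the infimum in \eqref{eq:entropyRegularized}.

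To close the loop, recall that by \eqref{eq:optCond} and \eqref{eq:optimumPi} the function $\bar\pi=\Phi(\bar u_1)\Phi(\bar u_2)e^{-c/\gamma}=e^{(\bar\alpha\oplus\bar\beta-c)/\gamma}$ is feasible and, by the optimality conditions theorem, solves \eqref{eq:entropyRegularized}. Since $\bar\pi=e^{-\bar a/\gamma}$ with $\bar a=c-\bar\alpha\oplus\bar\beta$, the integrand in the weak-duality identity vanishes identically for $(\pi,\alpha,\beta)=(\bar\pi,\bar\alpha,\bar\beta)$; because all occurring terms are finite by the feasibility step, this gives $J_{L^1}(\bar\alpha,\bar\beta)=\int c\bd\bar\pi+\gamma\int\bar\pi(\log\bar\pi-1)\bd x$, which equals the infimum in \eqref{eq:entropyRegularized}. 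Chaining $J_{L^1}(\bar\alpha,\bar\beta)\le\sup\eqref{eq:entropyRegularizedDual-l1}\le\inf\eqref{eq:entropyRegularized}=J_{L^1}(\bar\alpha,\bar\beta)$ shows $(\bar\alpha,\bar\beta)$ maximizes \eqref{eq:entropyRegularizedDual-l1} and that its value equals $\inf\eqref{eq:entropyRegularized}$, which by \cref{thm:strong-duality} equals the value of \eqref{eq:entropyRegularizedDual-formal_u1_u2}.

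I expect the main obstacle to be the feasibility step — showing that $\bar\alpha$ and $\bar\beta$ are integrable against $\mu$ and $\nu$, in particular controlling their negative parts via the finiteness of $B$ at its minimizer — together with the care needed to treat $\bar\alpha,\bar\beta$ as extended-real-valued functions without incurring an $\infty-\infty$ ambiguity in either objective functional; by comparison, the weak-duality inequality and the equality-case identity are elementary.
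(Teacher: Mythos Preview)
Your proof is correct and takes a genuinely different route from the paper's in both of its two halves. For feasibility, the paper exploits the optimality condition \eqref{eq:optCond1}: the map $x_1\mapsto\int_{\Omega_2}\Phi(\bar u_2)e^{-c/\gamma}\bd x_2$ is continuous and strictly positive on the compact $\Omega_1$, hence bounded between positive constants $\underline c,\overline c$, which gives the pointwise sandwich $|\bar\alpha/\gamma-\log\mu|\le K$ and then $\int_{\Omega_1}|\bar\alpha|\mu\bd x_1\le\gamma K+\gamma\int_{\Omega_1}|\log\mu|\mu\bd x_1<\infty$ from $\mu\in\Llog(\Omega_1)$. Your argument instead reads integrability off the finiteness of $B$ at its minimizer (each summand bounded below, sum finite, hence each finite); this is more elementary and does not invoke \eqref{eq:optCond} at this stage, at the cost of forgoing the quantitative comparison $\bar\alpha\approx\gamma\log\mu$. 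For the upper bound on \eqref{eq:entropyRegularizedDual-l1}, the paper argues by density and truncation: if some $(\alpha,\beta)\in L^1(\mu)\times L^1(\nu)$ strictly beat \eqref{eq:entropyRegularizedDual-formal_u1_u2}, replace them by bounded functions via monotone convergence and substitute $u_i=\Psi^{-1}(\cdot/\gamma)\in L^\infty\subset\Lexp$ to contradict optimality of $(\bar u_1,\bar u_2)$. You instead prove weak duality against \eqref{eq:entropyRegularized} by the pointwise Fenchel--Young inequality and then close the gap using $\bar\pi=e^{(\bar\alpha\oplus\bar\beta-c)/\gamma}$ from \eqref{eq:optimumPi}; this is more direct but leans on the preceding optimality-conditions theorem, whereas the paper's density step stays entirely on the dual side.
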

\begin{proof}
    First, note that the mapping $x_1\mapsto\int_{\Omega_2}\Phi(\bar u_2(x_2))e^{-\frac{c(x_1,x_2)}\gamma}\bd x_2$ is continuous and thus attains a minimum $\underline{c}>0$ and a maximum $\overline{c}>0$ on the (assumed to be) compact set $\Omega_1$.
    From the optimality condition \eqref{eq:optCond1}, we thus obtain that
    \begin{equation*}
        \underline{c}e^{\bar\alpha/\gamma}
        =\underline{c}\Phi(\bar u_1)
        \leq\mu
        \leq \overline{c}\Phi(\bar u_1)
        =\overline{c}e^{\bar\alpha/\gamma}\,.
    \end{equation*}
    This implies that $\log\mu-K\leq\bar\alpha/\gamma\leq\log\mu+K$ for some $K>0$.
    We thus have
    \begin{equation*}
        \tfrac1\gamma\int_{\Omega_1}|\bar\alpha|\mu\bd x_1
        \leq K\int_{\Omega_1}\mu\bd x_1+\int_{\Omega_1}|\log\mu|\mu\bd x_1.
    \end{equation*}
    Since $\mu\in \Llog(\Omega_1)$, we deduce that the right-hand side is finite and hence that $\bar\alpha$ is integrable with respect to $\mu$, i.e., $\bar\alpha\in L^{1}(\Omega_{1},\mu)$.
    The result for $\bar\beta$ follows analogously.
    Finally, it follows from a density argument that \eqref{eq:entropyRegularizedDual-l1} cannot exceed \eqref{eq:entropyRegularizedDual-formal_u1_u2}.
    Indeed, assume there are $\alpha\in L^1(\Omega_1,\mu)$ and $\beta\in L^1(\Omega_2,\nu)$ with an objective functional value $C$ strictly larger than \eqref{eq:entropyRegularizedDual-formal_u1_u2}.
    By invoking the monotone convergence theorem as in the proof of \cref{thm:strong-duality}, we may assume without loss of generality that $\alpha$ and $\beta$ are bounded.
    Defining now $u_1=\Psi^{-1}(\frac\alpha\gamma)\in L^\infty(\Omega_1)\subset\Lexp(\Omega_1)$ and $u_2=\Psi^{-1}(\frac\beta\gamma)\in L^\infty(\Omega_2)\subset\Lexp(\Omega_2)$
    shows that \eqref{eq:entropyRegularizedDual-formal_u1_u2} is no smaller than $C$, the desired contradiction.
\end{proof}

\begin{remark}\label{rem:uniqueness_dual}
    As for \eqref{eq:entropyRegularizedDual-formal} and as formalized in \cref{lem:invariance}, solutions to \eqref{eq:entropyRegularizedDual-formal_u1_u2} are not unique. 
\end{remark}

\section{\texorpdfstring{$\scriptstyle\Gamma$}{Γ}-limit}
\label{sec:gamma}

We now turn to $\Gamma$-convergence of the regularized problem.
Recall from, e.g., \cite{Braides:2002}, that a sequence $\{F_n\}$ of functionals $F_n:X\to \overline \R$ on a metric space $X$ is said to $\Gamma$-converge to a functional $F:X\to\overline \R$, written $F = \gammalim_{n\to\infty} F_n$, if
\begin{enumerate}[(i)]
    \item for every sequence $\{x_n\}\subset X$ with $x_n\to x$,
        \begin{equation*}
            F(x) \leq \liminf_{n\to\infty} F_n(x_n),
        \end{equation*}
    \item for every $x\in X$, there is a sequence $\{x_n\}\subset X$ with $x_n\to x$ and
        \begin{equation*}
            F(x) \geq \limsup_{n\to\infty} F_n(x_n).
        \end{equation*}
\end{enumerate}
It is a straightforward consequence of this definition that if $F_n$ $\Gamma$-converges to $F$ and $x_n$ is a minimizer of $F_n$ for every $n\in \N$, then every cluster point of the sequence $\{x_n\}$ is a minimizer to $F$. Furthermore, $\Gamma$-convergence is stable under perturbations by continuous functionals.

Here we aim to approximate optimal transport plans $\bar\pi$ of the unregularized problem for marginals $\mu$ and $\nu$ which are \emph{not} required to be in $\Llog(\Omega)$,
i.e., we allow arbitrary measures as marginals.
In this case we cannot use these marginals for the regularized problems as well, since these will admit no solutions by \cref{thm:solutionProperties}.
We therefore consider smoothed marginals $\mu_{\gamma}$ and $\nu_{\gamma}$ in $\Llog(\Omega)$ converging to $\mu$ and $\nu$, respectively, and show that the regularized problem with these marginals $\Gamma$-converges to the unregularized problem with the original marginals.
The conceptually different case of $\Gamma$-convergence for \emph{fixed, non-mollified} marginals (which then, however, need to be of finite entropy) has been treated in \cite[Thm.~2.7]{carlier2017convergence}.
Our setting with smoothed marginals allows simpler constructions in the $\limsup$-inequality since a given transport plan is merely approximated via mollification.
A further difference to \cite[Thm.~2.7]{carlier2017convergence} is that we work on a compact set $\Omega$ instead of $\mathbb{R}^n$ and need to couple the smoothing parameter to the regularization parameter to obtain $\Gamma$-convergence. 

Let $B$ be a smooth, compactly supported, nonnegative kernel with unit integral, and for $\delta>0$ and $n\in \N$ set
\begin{equation*}
    B^n_\delta(x):=\tfrac1{\delta^{n}}B(\tfrac x\delta)\,,\qquad
    G_\delta(x_1,x_2):=B^{n_1}_\delta(x_1)B^{n_2}_\delta(x_2)\,.
\end{equation*}
Since we will smooth the marginals and the transport plans by convolutions, we will need to slightly extend the domains $\Omega_{1}$ and $\Omega_{2}$ to avoid boundary effects. 
Hence, let $\tilde\Omega_1$ and $\tilde\Omega_2$ be compact supersets of $\Omega_1$ and $\Omega_2$, respectively, such that
\begin{equation*}
    \Omega_i + \supp B \subseteq \tilde\Omega_i,\quad i=1,2,
\end{equation*}
and which are large enough to contain the supports of $\mu_{\delta} := \mu\ast B_{\delta}^{n_1}$ and $\nu_{\delta}:=\nu\ast B_{\delta}^{n_2}$ for $\delta\leq 1$. (Here and in the following, we assume that the width of the convolution kernels will be small enough.)
For a function or measure $f$ on $\Omega_1$, we denote by $\tilde f$ the extension of $f$ to $\tilde\Omega_1$ by zero (and analogously for functions and measures on $\Omega_2$ and $\Omega_1\times\Omega_2$).
Let $\hat c$ be a continuous extension of $c$ onto $\tilde\Omega_1\times\tilde\Omega_2$ and set
\begin{align*}
    F_\gamma[\pi]
    &:=\int_{\tilde\Omega_1\times\tilde\Omega_2}\hat c\bd\pi+\gamma\int_{\tilde\Omega_1\times\tilde\Omega_2}\pi(\log\pi-1)\bd(x_1,x_2)\,,\\[1ex]
    E_\gamma^{\mu,\nu}[\pi]
    &:=\begin{cases}
        F_\gamma[\pi]&\text{if }\pi\in\prob(\tilde\Omega_1\times\tilde\Omega_2),\,\pushforward{(P_1)}{\pi}=\mu,\,\pushforward{(P_2)}{\pi}=\nu\,,\\
        \infty&\text{else.}
    \end{cases}
\end{align*}
Using smoothed marginals $\mu_{\delta},\nu_{\delta}$ and coupling $\gamma$ and $\delta$ in an appropriate way, we can then show $\Gamma$-convergence of $E^{\mu_{\delta},\nu_{\delta}}_{\gamma}$ to $E_0^{\mu,\nu}$ as $\gamma,\delta\to0$.
\begin{theorem}
    Let $\mu\in\prob(\Omega_{1})$, $\nu\in\prob(\Omega_{2})$, and $\gamma,\delta>0$ be such that
    \begin{equation*}
        \gamma\to0, \qquad \delta\to0, \qquad \gamma\log(\delta)\to 0,
    \end{equation*}
    which is denoted in the following by $(\gamma,\delta)\to 0$.
    Define $\mu_\delta=B_\delta^{n_1}\ast \tilde\mu$ and $\nu_\delta=B_\delta^{n_2}\ast \tilde\nu$.
    Then it holds that
    \begin{equation*}
        \gammalim_{(\gamma,\delta)\to0}E_\gamma^{\mu_\delta,\nu_\delta}=E_0^{\mu,\nu}
    \end{equation*}
    with respect to weak-$*$ convergence in $\meas(\tilde\Omega_{1}\times\tilde\Omega_{2})$.

    On the other hand, if $\gamma,\delta\to 0$ are chosen such that $\gamma\norm{\mu_\delta}_{\Phi_{\log}}\to\infty$ or $\gamma\norm{\nu_\delta}_{\Phi_{\log}}\to\infty$, then $E^{\mu_\delta,\nu_\delta}_\gamma$ does not have a finite $\Gamma$-limit. More precisely, even for a family of feasible $\pi_{\delta}$ (i.e., with marginals $\mu_{\delta}$ and $\nu_{\delta}$) it holds that
    \begin{equation*}
        \lim_{\gamma,\delta\to0} E^{\mu_\delta,\nu_\delta}_\gamma[\pi_\delta] = \infty.
    \end{equation*}
\end{theorem}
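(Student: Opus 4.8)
The plan is to establish the two $\Gamma$-convergence inequalities for the first assertion and then a direct lower energy estimate for the second. I interpret the limit $(\gamma,\delta)\to 0$ sequentially: fix any sequence $(\gamma_k,\delta_k)$ with $\gamma_k\to 0$, $\delta_k\to 0$, $\gamma_k\log\delta_k\to 0$, and set $F_k:=E_{\gamma_k}^{\mu_{\delta_k},\nu_{\delta_k}}$; since all relevant measures are probability measures on the fixed compact set $\tilde\Omega_1\times\tilde\Omega_2$, the weak-$*$ topology there is metrizable and the definition of $\Gamma$-convergence from the excerpt applies. For the \emph{liminf inequality}, let $\pi_k\weakstarto\pi$; if $\liminf F_k[\pi_k]=\infty$ there is nothing to show, so after passing to a subsequence assume $F_k[\pi_k]$ converges to a finite value, whence $\pi_k$ is, for $k$ large, a probability measure with marginals $\mu_{\delta_k},\nu_{\delta_k}$. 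Since $\mu_\delta\weakstarto\mu$ and $\nu_\delta\weakstarto\nu$ by standard mollifier properties, and since pushforward under the continuous coordinate projections is weak-$*$ continuous, the limit $\pi$ is a probability measure with marginals $\mu,\nu$, hence feasible for $E_0^{\mu,\nu}$. The cost term converges, $\int\hat c\bd\pi_k\to\int\hat c\bd\pi$, because $\hat c$ is continuous and bounded on $\tilde\Omega_1\times\tilde\Omega_2$; the entropy term satisfies $\gamma_k\int\pi_k(\log\pi_k-1)\bd x\geq-\gamma_k(\lebesgue(\tilde\Omega_1\times\tilde\Omega_2)/e+1)\to 0$ via $s(\log s-1)\geq-1/e-s$ and $\int\pi_k\bd x=1$ (and trivially also if $\pi_k$ has no density). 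Adding the two contributions gives $E_0^{\mu,\nu}[\pi]=\int\hat c\bd\pi\leq\liminf F_k[\pi_k]$.

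For the \emph{recovery sequence}, fix $\pi$; if it is not feasible for $E_0^{\mu,\nu}$ the inequality is trivial, so let $\pi\in\prob(\tilde\Omega_1\times\tilde\Omega_2)$ have marginals $\mu,\nu$ (so $\pi$ is supported in $\Omega_1\times\Omega_2$), and set $\pi_k:=\pi\ast G_{\delta_k}$. A Fubini computation shows that the marginals of $\pi\ast G_\delta$ are exactly $\mu\ast B_\delta^{n_1}=\mu_\delta$ and $\nu\ast B_\delta^{n_2}=\nu_\delta$, that $\supp\pi_k\subseteq\tilde\Omega_1\times\tilde\Omega_2$ for $\delta_k\leq 1$, and that $\pi_k\weakstarto\pi$; thus $\pi_k$ is admissible for $F_k$. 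The cost term again converges to $\int\hat c\bd\pi$. For the entropy, the crucial estimate is the pointwise bound $\pi_k(x)=\int G_{\delta_k}(x-y)\bd\pi(y)\leq\norm{G_{\delta_k}}_{\infty}=C_1\delta_k^{-(n_1+n_2)}$, valid since $\pi$ is a probability measure; using $s\log s\leq s\log M$ for $0\leq s\leq M$ and $\int\pi_k\bd x=1$ we obtain
\begin{equation*}
    \gamma_k\int\pi_k(\log\pi_k-1)\bd x\leq\gamma_k\bigl(\log C_1-(n_1+n_2)\log\delta_k-1\bigr),
\end{equation*}
which tends to $0$ precisely because $\gamma_k\to 0$ and $\gamma_k\log\delta_k\to 0$. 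Combined with the lower bound from the liminf step, the entropy contribution vanishes and $\limsup F_k[\pi_k]=\int\hat c\bd\pi=E_0^{\mu,\nu}[\pi]$, completing the first assertion.

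For the \emph{second assertion}, suppose $\gamma_k\norm{\mu_{\delta_k}}_{\Phi_{\log}}\to\infty$ (the case involving $\nu$ is symmetric); note this forces $\norm{\mu_{\delta_k}}_{\Phi_{\log}}\to\infty$ since $\gamma_k\to 0$. Let $\pi_k$ be feasible with marginals $\mu_{\delta_k},\nu_{\delta_k}$. If $\pi_k\notin\Llog(\tilde\Omega_1\times\tilde\Omega_2)$ then $F_{\gamma_k}[\pi_k]=\infty$ by \cref{thm:finiteEntropy}; otherwise $F_{\gamma_k}[\pi_k]\geq-\norm{\hat c}_\infty+\gamma_k E(\pi_k)-\gamma_k$ with $E(\pi_k)=\int\pi_k\log\pi_k\bd x$. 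By \cref{thm:projection}, $\norm{\pi_k}_{\Phi_{\log}}\geq\norm{\mu_{\delta_k}}_{\Phi_{\log}}/\max(1,\lebesgue(\tilde\Omega_2))$, which exceeds $1$ for $k$ large; then \cref{thm:OrliczFunctionEstimate} gives $\int\pi_k\log^+\pi_k\bd x\geq\norm{\pi_k}_{\Phi_{\log}}$, and since $\pi_k(\log\pi_k-\log^+\pi_k)\geq-1/e$ pointwise, $E(\pi_k)\geq\norm{\pi_k}_{\Phi_{\log}}-\lebesgue(\tilde\Omega_1\times\tilde\Omega_2)/e$. Hence $\gamma_k E(\pi_k)\geq c\,\gamma_k\norm{\mu_{\delta_k}}_{\Phi_{\log}}-\gamma_k\lebesgue(\tilde\Omega_1\times\tilde\Omega_2)/e\to\infty$ for a constant $c>0$, so $F_{\gamma_k}[\pi_k]\to\infty$.

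\emph{Expected main obstacle.} The delicate step is the entropy bound in the recovery sequence: one must control $\int\pi_k\log\pi_k\bd x$ by $O(-\log\delta_k)$ through the mollifier scaling $\norm{G_\delta}_\infty\sim\delta^{-(n_1+n_2)}$, which is exactly what makes the coupling $\gamma\log\delta\to 0$ sufficient, while the matching lower bound obtained from \cref{thm:projection,thm:OrliczFunctionEstimate} shows it is also essentially sharp. The remaining points — weak-$*$ continuity of the marginal maps, $\mu_\delta\weakstarto\mu$, the marginal identity for $\pi\ast G_\delta$, and the elementary scalar inequalities for $s\log s$ — are routine.
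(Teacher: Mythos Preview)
Your proposal is correct and follows essentially the same approach as the paper: the liminf inequality via the pointwise lower bound on $s(\log s-1)$ and weak-$*$ continuity of the marginal constraints, the recovery sequence $\pi_k=\pi\ast G_{\delta_k}$ with the entropy controlled through $\norm{G_\delta}_\infty\sim\delta^{-(n_1+n_2)}$ and the coupling $\gamma\log\delta\to0$, and the divergence statement via \cref{thm:projection} combined with \cref{thm:OrliczFunctionEstimate}. Your write-up is in fact somewhat more careful than the paper's in a few places (metrizability of the weak-$*$ topology, the explicit splitting $\log=\log^{+}+(\log-\log^{+})$ in the second assertion), but the underlying argument is the same.
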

\begin{proof}
    For the first statement, we verify the two conditions in the definition of $\Gamma$-convergence.

    \emph{(i):}
    Let $\pi_\delta\weakstarto\tilde\pi$, then $\lim_{\delta\to0}F_0[\pi_\delta] = F_0[\tilde\pi]$ since $\hat c$ is continuous and bounded.
    Since $t(\log t - 1) \geq -1$, we also have that
    \begin{equation*}
        \int_{\tilde\Omega_1\times\tilde\Omega_2}\pi_\delta(\log\pi_\delta-1)\bd(x_1,x_2)\geq-|\tilde\Omega_{1}\times\tilde\Omega_{2}|
    \end{equation*}
    and thus that
    \begin{equation*}
        F_0[\tilde\pi]
        =\lim_{\delta\to0}F_0[\pi_\delta]-\lim_{\gamma\to 0}\gamma\abs{\tilde\Omega_{1}\times\tilde\Omega_{2}}
        \leq\liminf_{(\gamma,\delta)\to0}F_\gamma[\pi_\delta]\,.
    \end{equation*}
    Finally, the condition on the marginals is continuous with respect to weak-$*$ convergence of $\pi_\delta$, $\mu_\delta$, and $\nu_\delta$
    (note that $\mu_\delta,\nu_\delta\weakstarto\tilde\mu,\tilde\nu$).

    \emph{(ii):}
    It suffices to consider a recovery sequence for $\pi\in\prob(\Omega_1\times\Omega_2)$, because the marginal conditions for $\mu$ and $\nu$ can never be satisfied for $\pi\in\prob(\tilde\Omega_1\times\tilde\Omega_2)\setminus\prob(\Omega_1\times\Omega_2)$.
    If $E_0^{\mu,\nu}[\tilde\pi]=\infty$, then the $\limsup$ condition holds trivially. Let therefore $E_0^{\mu,\nu}[\tilde\pi]$ be finite.
    We set $\pi_\delta:=G_\delta\ast \tilde\pi$.
    Then $\pi_\delta\weakstarto\tilde\pi$ as well as $\pushforward{(P_1)}{\pi_\delta}=\mu_\delta$, $\pushforward{(P_2)}{\pi_\delta}=\nu_\delta$.
    Since by Young's convolution inequality $\pi_\delta \leq \norm{G_\delta}_{L^\infty}\norm{\tilde\pi}_{L^1} \leq \frac{C}{\delta^{N}}$ for some constant $C>0$ and $N:=n_1+n_2$ and we have
    \begin{equation*}
        \int_{\tilde\Omega_1\times\tilde\Omega_2} \pi_\delta\bd(x_1,x_2)= \int_{\tilde\Omega_1\times\tilde\Omega_2} \pi \ast G_{\delta}\bd(x_1,x_2)= \int_{\tilde\Omega_1\times\tilde\Omega_2} \pi\bd(x_1,x_2) \int_{\tilde\Omega_1\times\tilde\Omega_2} G_\delta\bd(x_1,x_2)= 1\,,
    \end{equation*}
    we conclude that
    \begin{equation*}
        \gamma \int_{\tilde\Omega_1\times\tilde\Omega_2}\pi_\delta(\log\pi_\delta-1)\bd(x_1,x_2)
        \leq \gamma \left(\log\Big(\frac{C}{\delta^{N}}\Big)-1\right) \int_{\tilde\Omega_1\times\tilde\Omega_2}\pi_\delta\bd(x_1,x_2)
        = -\gamma  (1+N\log \delta - \log C)\,.
    \end{equation*}
    The right-hand side vanishes for $(\gamma,\delta)\to0$ by the assumption on the (coupled) convergences of $\gamma$ and $\delta$. Hence,
    \begin{equation*}
        E_0^{\mu,\nu}[\tilde\pi]
        =\lim_{(\gamma,\delta)\to0}\left[F_0[\pi_\delta]  -\gamma  (1+N\log \delta - \log C)\right]
        \geq\limsup_{(\gamma,\delta)\to0}F_\gamma[\pi_\delta]\,.
    \end{equation*}

    For the second statement, recall from \cref{thm:projection} that
    \begin{equation*}
        \gamma\norm{\mu_\delta}_{\Phi_{\log}} \leq \max(1,\lebesgue(\Omega_2))\gamma \norm{\pi_\delta}_{\Phi_{\log}}\,,
        \qquad
        \gamma\norm{\nu_\delta}_{\Phi_{\log}} \leq \max(1,\lebesgue(\Omega_1))\gamma \norm{\pi_\delta}_{\Phi_{\log}}
    \end{equation*}
    so that $\gamma \norm{\pi_\delta}_{\Phi_{\log}}\to\infty$.    
    By \cref{thm:OrliczFunctionEstimate}, this immediately yields $\gamma\int_{\tilde\Omega_1\times\tilde\Omega_2}\pi_\delta\log^+\pi_{\delta}\bd(x_1,x_2)\to\infty$,
    which implies
    \[
        \gamma\int_{\tilde\Omega_1\times\tilde\Omega_2}\pi_\delta(\log\pi_\delta-1)\bd(x_1,x_2)
        =\gamma\int_{\tilde\Omega_1\times\tilde\Omega_2}\pi_\delta\log\pi_\delta\bd(x_1,x_2)-\gamma\to\infty
    \]
    and thus $F_\gamma[\pi_\delta]\to\infty$ so that the assertion follows.
\end{proof}

The conditions on $\gamma$ and $\delta$ are in particular satisfied for $\delta = c\gamma$ for some $c>0$.

\section{Conclusion}
\label{sec:conclusion}

In contrast to the original Kantorovich formulation of optimal transport problems, their entropic regularization is well-posed only for marginals with finite entropy. Restricting the regularized problem to such functions and applying Fenchel duality in the space $\Llog(\Omega)$ allows deriving primal-dual optimality conditions that can be interpreted pointwise almost everywhere and used to derive a continuous version of the popular Sinkhorn algorithm. For marginals that do not have finite entropy, a combined regularization and smoothing approach leads to a family of well-posed approximations that $\Gamma$-converge to the original Kantorovich formulation if the regularization and smoothing parameters are coupled in an appropriate way.

This work can be extended in several directions. For example, we have considered the usual setting where the entropic penalty is taken with respect to Lebesgue density. More general penalties have been considered in a different framework in \cite{Leonard:2008}, and other choices (such as the product measure of the marginals) are possible in the approach considered here as well and may lead to well-posedness and duality for a larger class of marginals. Naturally, a challenging but worthwhile issue would be a convergence analysis of the Sinkhorn algorithm in the considered Orlicz spaces $\Llog(\Omega)$ and $\Lexp(\Omega)$. 

\section*{Acknowledgments}

Dirk Lorenz, Hinrich Mahler, and Benedikt Wirth acknowledge support by the German Research Foundation (DFG) within the priority program ``Non-smooth and Complementarity-based Distributed Parameter Systems: Simulation and Hierarchical Optimization'' (SPP 1962) under grant numbers LO 1436/9-1 and WI 4654/1-1.
Benedikt Wirth was further supported by the Alfried Krupp Prize for Young University Teachers awarded by the Alfried Krupp von Bohlen und Halbach-Stiftung and by the DFG via Germany’s Excellence Strategy through the Cluster of Excellence ``Mathematics Münster: Dynamics – Geometry – Structure'' (EXC 2044) at the University of Münster.

The authors would also like to thank the anonymous reviewers for a number of useful comments and suggestions regarding the presentation.

\bibliographystyle{jnsao}
\bibliography{transport}

\end{document}